\documentclass[10pt]{article}

\usepackage{graphicx}

\usepackage{amsmath}

\usepackage{amssymb}

\usepackage{bm}

\usepackage{amscd}

\usepackage[top=13truemm
%
%
, bottom=20truemm, left=30truemm, right=30truemm]{geometry}

\usepackage{setspace}
\setstretch{1}

\usepackage{amsthm}

\usepackage{array}

\newcolumntype{I}{!{\vrule width 2pt}}

\newtheorem{dfn}{Definition}[subsection]
\newtheorem{pp}[dfn]{Proposition}
\newtheorem{col}[dfn]{Corollary}
\newtheorem{lem}[dfn]{Lemma}

\newtheorem{thrm}[dfn]{Theorem}
\newtheorem{fact}[dfn]{Fact}

\def\M{\operatorname{Mat}}

\def\A{\operatorname{Alt}}
\def\S{\operatorname{Sym}}
\def\G{\operatorname{GL}}

\def\tr{\operatorname{trace}}
\def\rk{\operatorname{rank}}
\def\wt{\operatorname{wt}}

\def\ker{\operatorname{ker}}
\def\sgn{\operatorname{sgn}}
\def\diag{\operatorname{diag}}
\def\even{\operatorname{even}}
\def\odd{\operatorname{odd}}

\def\Aff{\operatorname{Aff}}

\newcommand{\1}{\Psi^{(1)}}
\newcommand{\2}{\Psi^{(2)}}
\newcommand{\3}{\Psi^{(3)}}
\newcommand{\4}{\Psi^{(4)}}
\newcommand{\calo}{\mathcal{O}}
\newcommand{\calp}{\mathcal{P}}
\newcommand{\calf}{\mathcal{F}}
\newcommand{\bbf}{\mathbb{F}}

\title{\bf $q$-
Hypergeometric Polynomials and Group-invariant Fourier Transformations over a Finite Field}
\author{Koei KAWAMURA}
\date{}

\setcounter{section}{-1}

\begin{document}

\maketitle

\begin{abstract}
By the Fourier transformations, any group-invariant functions over finite Abelian groups are transformed into group-invariant functions over the character groups.
In this paper, we calculate matrix elements of this transformations under specific bases.
More specifically, we deal with some vector spaces over a finite field and linear actions.
Then the matrix elements under adequate bases are expressed by Krawtchouk or Affine $q$-Krawtchouk polynomials.
For calculations, we construct a commutative diagram which combines two settings of group-invariant Fourier transformations.
We apply it to different sized transformations of each example, and solve it inductively.
We remark the matrix elements are related to the zonal spherical functions of finite Gelfand pairs.
\end{abstract}

\section{Introduction}\label{intro}

As our general setting, let a finite group $G$ act on a finite Abelian group $A$ as group isomorphisms.
Then $G$ also acts on the character group $\hat{A}$ of $A$ by the contragredient action.
In this paper, we study Fourier transformations of $G$-invariant functons on $A$ and $\hat{A}$.
Let $\mathbb{C}[A]$ and $\mathbb{C}[\hat{A}]$ be the spaces of complex valued functions on $A$ and $\hat{A}$ respectively.
For any function $\varphi \in \mathbb{C}[A]$, we define the Fourier transformation $\mathcal{F} \varphi \in \mathbb{C}[\hat{A}]$ by
\begin{equation}
 \label{intro-f-henkan}
\mathcal{F}\varphi(\xi)\;= \sum_{a \in A} \overline{\xi(a)} \varphi(a)  \qquad(\xi \in \hat{A}).
\end{equation}
Also the inverse transformation $\bar{\calf}$ is defined, and
the map $\mathcal{F}:\mathbb{C}[A] \rightarrow \mathbb{C}[\hat{A}]$ is a linear isomorphism.
The $G$-actions on $A$ and $\hat{A}$ are brought up to the spaces $\mathbb{C}[A]$ and $\mathbb{C}[\hat{A}]$ respectively, and they are commutative with the transformation $\mathcal{F}$.
Thus we can consider the `group-invariant' Fourier transformation $\mathcal{F}:\mathbb{C}[A]^G \rightarrow \mathbb{C}[\hat{A}]^G$ by restricting $\mathcal{F}$ to the spaces of $G$-invariant functions.
By regarding $G$-invariant functions as functions on the orbits set $_G\! \backslash A$ or $_G\! \backslash A$,
we may write
\begin{equation}
 \label{intro-g-f}
\mathcal{F}:\mathbb{C}[_G\! \backslash A] \rightarrow \mathbb{C}[_G\! \backslash \hat{A}].
\end{equation}
We can take `canonical' bases of the spaces $\mathbb{C}[_G\! \backslash A]$ and $\mathbb{C}[_G\! \backslash \hat{A}]$
that consist of characteristic functions $\chi_{\calo}$ of orbits $\calo \in \,_G\! \backslash A$ or $\chi_{\calp}$ of orbits $\calp \in\,_G\! \backslash \hat{A}$.
We define `canonical' matrix $\varPhi= \left( \varPhi(\mathcal{P}, \mathcal{O})\right)_{\calp \in\, _G\! \backslash \hat{A}, \calo \in\, _G\! \backslash A}$ of the transformation (\ref{intro-g-f}) under these bases. 
By definition,
\begin{equation}
 \label{intro-me}
\varPhi(\mathcal{P}, \mathcal{O})=\; \mathcal{F}\chi_{\mathcal{O}}(\mathcal{P}) =\; \sum_{a \in \mathcal{O}} \overline{\xi(a)}, \quad \text{where}\;\;\xi \in \mathcal{P}.
\end{equation}
We calculate it for some pairs $(A,G)$.
Specifically in any cases, $A$ is a vector space over a finite field $\bbf$, and the $G$-action is $\bbf$-linear and has a good property (called adjoint-free in Definition \ref{adjf}).
Then we can correspond the orbits sets $_G \backslash A$ and $_G \backslash \hat{A}$ and parametrize both by a set $\Lambda$.
So $\varPhi$ is defined on $\Lambda \times \Lambda$.

The canonical matrix elements in our examples are expressed by Krawtchouk polynomials or Affine $q$-Krawtchouk polynomials.
These special functions are defined in terms of the terminating Gaussian hypergeometric function $_2F_1$ or the basic hypergeometric function $_3\varphi_2$ as follows \cite{Koe}, \cite{S}:
\begin{dfn}
 \label{kraw}
\; For non-negative integers $y$ and $N$ such that $y \leq N$ and parameter $p$, the Krawtchouk polynomial is defined by
$$
K_y(x;p,N)=\,
{}_2F_1
\begin{pmatrix}
-y,\; -x   \\[-12pt]
 & ; & \!\! \dfrac{1}{p} \; \\[-12pt]
 \; -N &  
\end{pmatrix}
= \; \sum_{k=0}^y\, \frac{(-y)_k \; (-x)_k}{(-N)_k \; k! \; p^k}.
$$
This is a polynomial in $x$ of degree $y$.
Here $(a)_k=a \, (a+1) \cdot \dots \cdot (a+k-1), (a)_0=1$.
\end{dfn}

\begin{dfn}
 \label{affqkraw}
\; For non-negative integers $y$ and $N$ such that $y \leq N$ and parameter $a, q$, the Affine $q$-Krawtchouk polynomial is defined by
$$
K^{\Aff}_y(x;a,N;q)=\;
{}_3\varphi_2
\begin{pmatrix}
q^{-y}, \; q^{-x},\; 0   \\[-6pt]
 & ; & \!\! q, & q \; \\[-6pt]
\; a, \;q^{-N} &  
\end{pmatrix}
=\;
\sum_{k=0}^{y}\, \frac{(q^{-y};q)_k \; (q^{-x};q)_k}{(q^{-N};q)_k \; (a;q)_k \; (q;q)_k}\, q^k.
$$
This is a polynomial in $q^{-x}$ of degree $y$.
Here $(a;q)_k=(1-a) \, (1-aq) \, (1-aq^2) \cdot \dots \cdot \, (1-aq^{k-1}), (a;q)_0=1$.
\end{dfn}
\noindent
We remark Affine $q$-Krawtchouk polynomial is one of $q$-analogues of Krawtchouk polynomial \cite{Koe}.
We use also the Gaussian-polynomial
defined by
\begin{equation}
 \label{gau}
\begin{bmatrix} N \\ x \end{bmatrix} _q =\; \frac{(q^N;q^{-1})_x}{(q;q)_x} \qquad (x \leq N),
\end{equation}
which is a $q$-analogue of the binomial coefficient $\binom{N}{x}$.

The first example we deal with is the case $(A, G)=(\mathbb{F}^n, (\mathbb{F}^\times)^n \rtimes \mathfrak{S}_n)$, where $\mathbb{F}^\times$ is the multiplicative group of $\mathbb{F}$ and $\mathfrak{S}_n$ is the symmetric group.
Then orbits in $\bbf^n$ are characterized by weights of the elements, and parametrized by the set $\{0,1, \cdots, n\}$.
The canonical matrix elements are as follows:
\begin{pp}
\label{intro-diag-me}
\;For $s,r \in \{0,1, \cdots, n\}$,
\begin{equation}\nonumber
\varPhi_n(s,r)=\;(q-1)^r\, \binom{n}{r}\, K_s(r;\frac{q-1}{q},n),
\end{equation}
\end{pp}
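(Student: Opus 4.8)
The plan is to evaluate the character sum defining $\varPhi_n(s,r)$ in (\ref{intro-me}) directly, and then to recognize the resulting polynomial as the claimed hypergeometric expression by comparing generating functions. First I would fix a nontrivial additive character $\psi$ of $\bbf$ and use it to identify the character group of $\bbf^n$ with $\bbf^n$ itself, so that the character attached to $b \in \bbf^n$ is $\xi_b(a)=\psi(b\cdot a)$ and hence $\overline{\xi_b(a)}=\psi(-b\cdot a)$. Under this identification the contragredient $G$-action again permutes and rescales coordinates, so by the adjoint-free property of Definition~\ref{adjf} the orbit $\calp$ of weight $s$ corresponds to a vector $b$ with exactly $s$ nonzero entries; I take $b=(b_1,\dots,b_s,0,\dots,0)$ with every $b_i \neq 0$.

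Next I would exploit the factorization $\psi(-b\cdot a)=\prod_{i=1}^{n}\psi(-b_i a_i)$. Summing over all $a$ with $\wt(a)=r$ amounts to choosing a support $S$ of size $r$ and, for each $i\in S$, summing $a_i$ over $\bbf^{\times}$. The coordinate sums are $\sum_{a_i\neq 0}\psi(-b_i a_i)=-1$ when $b_i\neq 0$ (i.e. $i\le s$) and $\sum_{a_i\neq 0}\psi(0)=q-1$ when $b_i=0$ (i.e. $i>s$). Grouping supports by $j=|S\cap\{1,\dots,s\}|$ yields
\begin{equation}\nonumber
\varPhi_n(s,r)=\sum_{j=0}^{\min(s,r)}(-1)^j(q-1)^{r-j}\binom{s}{j}\binom{n-s}{r-j}.
\end{equation}
Collecting the powers of $z$ in a generating function and using the binomial theorem twice, this family satisfies $\sum_{r}\varPhi_n(s,r)\,z^r=(1-z)^s\,(1+(q-1)z)^{n-s}$.

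Finally I would match this with the target. Converting the Pochhammer symbols of Definition~\ref{kraw} to factorials gives $\tfrac{(-s)_k(-r)_k}{(-n)_k}=(-1)^k\tfrac{s!\,r!\,(n-k)!}{(s-k)!\,(r-k)!\,n!}$, and after multiplying by $(q-1)^r\binom{n}{r}$ and simplifying, the right-hand side of the proposition becomes
\begin{equation}\nonumber
(q-1)^r\binom{n}{r}K_s\!\left(r;\tfrac{q-1}{q},n\right)=\sum_{k=0}^{s}(-1)^k q^k (q-1)^{r-k}\binom{s}{k}\binom{n-k}{r-k}.
\end{equation}
Computing its generating function, the inner sum over $r$ produces $(1+(q-1)z)^{n-k}$, and the outer sum collapses the factor $\left(1-\tfrac{qz}{1+(q-1)z}\right)^s=\left(\tfrac{1-z}{1+(q-1)z}\right)^s$, so the whole expression again equals $(1-z)^s(1+(q-1)z)^{n-s}$. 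Equating coefficients of $z^r$ then completes the proof.

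The main obstacle is precisely this last reconciliation of the combinatorial character-sum evaluation with the ${}_2F_1$ definition of the Krawtchouk polynomial. The two expressions have genuinely different summands — one weighted by $(q-1)^{r-j}$, the other by $q^k(q-1)^{r-k}$ — so no term-by-term comparison is available, and the common generating function $(1-z)^s(1+(q-1)z)^{n-s}$ is what forces their agreement. Verifying that both sides really produce this generating function is where the bookkeeping concentrates; the inductive argument via the commutative diagram advertised in the introduction provides an alternative to this direct matching.
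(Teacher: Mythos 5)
Your proposal is correct, but it takes a genuinely different route from the paper's. You evaluate the character sum in (\ref{intro-me}) directly: since the characters of $\bbf^n$ factor over coordinates, the sum over $\{a \mid \wt a = r\}$ splits according to the support, each one-dimensional sum contributes $-1$ (over a nonzero coordinate of $b$) or $q-1$ (over a zero coordinate), and you arrive at the closed form $\varPhi_n(s,r)=\sum_{j}(-1)^j(q-1)^{r-j}\binom{s}{j}\binom{n-s}{r-j}$, which you then reconcile with the ${}_2F_1$ expression through the common generating function $(1-z)^s(1+(q-1)z)^{n-s}$; all of these computations check out, including the conversion of Pochhammer symbols and the collapse of the double sum on the Krawtchouk side. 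The paper never computes the sum directly: it applies the commutative diagram (\ref{d3}) relating $\mathcal{F}_n$ to $\mathcal{F}_{n-1}$ in both the forward and inverse directions, extracts the backward- and forward-shift recursions (\ref{diag-koutai}) and (\ref{diag-zensin}), and then invokes Prop.\ref{zenkasikitoku}(2), which identifies the unique simultaneous solution of that pair of recursions as $|\mathcal{O}_n(r)|\,K_s(r;\frac{q-1}{q},n)$; the generating function of Prop.\ref{diag-bo} is there a corollary, not the proof mechanism. Your argument is more elementary and self-contained for this particular pair, and it delivers $|\mathcal{O}_n(r)|=(q-1)^r\binom{n}{r}$ and the generating function in one stroke; but it leans essentially on the coordinatewise factorization of additive characters, which is special to $(\bbf^n,(\bbf^\times)^n\rtimes\mathfrak{S}_n)$. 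The paper's recursion machinery trades that directness for uniformity: the same diagram-plus-Pascal-recursion scheme transfers unchanged to $\M_{n,m}$, $\A_n$ and $\S_n$, where the analogous direct summation over rank orbits would be substantially harder.
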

\noindent
where $q$ is the order of $\bbf$.
In the other examples, we work with additive groups of matrices over $\mathbb{F}$.
Let $\M_{n,m}(\mathbb{F})$ be $n \times m$-sized all matrices, where $n \leq m$ for simplicity.
The group $\G_n(\bbf) \times \G_m(\bbf)$ acts on $\M_{n,m}(\bbf)$ by multiplication, and the orbits are parametrized by rank $r \in \{0,1, \cdots, n\}$ of the elements.
The canonical matrix elements of the case $(A,G)=\left(\M_{n,m}(\bbf), \G_n(\bbf) \times \G_m(\bbf)\right)$ is as follows:
\begin{pp}
 \label{intro-mat-me}
\;For $s,r \in \{0,1, \cdots, n\}$,
\begin{equation}\nonumber
\varPhi_{n,m}(s,r)
=\; (-1)^rq^{\binom{r}{2}} (q^m;q^{-1})_r \begin{bmatrix} n \\ r \end{bmatrix} _q \, K^{\Aff}_s(r; q^{-m}, n; q).
\end{equation}
\end{pp}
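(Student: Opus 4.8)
The plan is to compute the canonical matrix element $\varPhi_{n,m}(s,r)$ directly as a character sum over rank-$r$ matrices, to reduce the matrix size by deleting one column, and to solve the resulting recursion by induction, matching it against a contiguous relation for the Affine $q$-Krawtchouk polynomial.

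First I would fix a nontrivial additive character $\psi$ of $\bbf$ and identify $\hat A=\widehat{\M_{n,m}(\bbf)}$ with $\M_{n,m}(\bbf)$ via the character $\xi_B(X)=\psi(\tr(\,{}^{t}\!B\,X))$. Under this identification the contragredient $\G_n(\bbf)\times\G_m(\bbf)$-action is again multiplication, hence adjoint-free (Definition \ref{adjf}), and the character orbit labelled by $s$ is represented by any fixed matrix $B_s$ of rank $s$. Then \eqref{intro-me} reads
\begin{equation}\nonumber
\varPhi_{n,m}(s,r)=\sum_{\rk X=r}\overline{\xi_{B_s}(X)}=\sum_{\rk X=r}\psi\big(\tr(\,{}^{t}\!B_s\,X)\big)=:S^{(n,m)}_r(s),
\end{equation}
where the conjugate disappears because $X\mapsto -X$ preserves rank. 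I would take $B_s$ supported in its first $s$ columns, so that $\xi_{B_s}(X)$ depends only on the first $s$ columns of $X$; this is what makes a size reduction possible.

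The core step is to write $X=[\,X'\mid v\,]$ with $X'\in\M_{n,m-1}(\bbf)$ and last column $v\in\bbf^n$, and to split the sum according to whether $v$ lies in the column space of $X'$. Counting the $q^{r}$ vectors $v$ inside an $r$-dimensional column space (when $\rk X'=r$) and the $q^{n}-q^{r-1}$ vectors outside an $(r-1)$-dimensional one (when $\rk X'=r-1$) yields, for $s\le m-1$, the two-term recursion
\begin{equation}\nonumber
S^{(n,m)}_r(s)=q^{r}\,S^{(n,m-1)}_r(s)+(q^{n}-q^{r-1})\,S^{(n,m-1)}_{r-1}(s).
\end{equation}
This recursion is the arithmetic content of the commutative diagram relating the $\G_n\times\G_m$-transform to the smaller $\G_n\times\G_{m-1}$-transform,
\[
\begin{CD}
\bbc[_{G}\!\backslash \M_{n,m}(\bbf)] @>\mathcal{F}>> \bbc[_{G}\!\backslash \widehat{\M_{n,m}(\bbf)}]\\
@AAA @AAA\\
\bbc[_{G'}\!\backslash \M_{n,m-1}(\bbf)] @>\mathcal{F}>> \bbc[_{G'}\!\backslash \widehat{\M_{n,m-1}(\bbf)}]
\end{CD}
\]
with $G=\G_n\times\G_m$, $G'=\G_n\times\G_{m-1}$, the vertical maps induced by the column-appending inclusion; the two horizontal arrows are the ``different sized transformations'' to be combined.

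Finally I would induct on $m$. The base case is the corner $m=s$, where $B_s$ has full column rank and $\tr(\,{}^{t}\!B_s\,X)$ becomes the trace of the leading $s\times s$ block; this value (and the trivial case $s=0$) I would evaluate directly by a Gauss-sum computation. For the inductive step I would substitute the claimed formula, isolate the normalising constant $(-1)^rq^{\binom{r}{2}}(q^m;q^{-1})_r\begin{bmatrix} n \\ r \end{bmatrix}_q$, which equals the number of rank-$r$ matrices (so that $\varPhi_{n,m}(0,r)$ comes out correctly as a check), and reduce the recursion to a three-term relation among $K^{\Aff}_s(r;q^{-m},n;q)$, $K^{\Aff}_s(r;q^{-(m-1)},n;q)$ and $K^{\Aff}_s(r-1;q^{-(m-1)},n;q)$. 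I expect the main obstacle to be proving exactly this contiguous relation: it must be checked from the defining ${}_3\varphi_2$-series in Definition \ref{affqkraw} by comparing coefficients of each power of $q^{-s}$ and applying the elementary recurrences for the $q$-shifted factorials $(q^{-r};q)_k$ and $(q^m;q^{-1})_r$. Once this identity is secured, the induction closes and gives the stated expression for $\varPhi_{n,m}(s,r)$.
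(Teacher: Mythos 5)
Your column-deletion recursion is correct, and it is worth noting that it is exactly an instance of the paper's general machinery: taking $\pi\colon \M_{n,m}\to\M_{n,m-1}$ (delete the last column) with \emph{trivial} intersection character $\zeta=1$, one has $\hat{\pi}_\zeta(\theta_b)=\theta_{{}^t\pi(b)}$ where ${}^t\pi$ appends a zero column, so $(\natural)$ holds, the matrix of $\pi_*$ is $E(u,r)=q^u\delta_{u,r}+(q^n-q^u)\delta_{u,r-1}$, and Theorem \ref{main3} gives precisely your relation $\varPhi_{n,m}(s,r)=q^{r}\varPhi_{n,m-1}(s,r)+(q^{n}-q^{r-1})\varPhi_{n,m-1}(s,r-1)$ for $s\le m-1$. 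But this same computation exposes the genuine gap in your plan: with $\zeta$ trivial the orbit map (\ref{hatpi-para}) \emph{preserves} the character parameter, $\tilde{s}=s$, so iterating your diagram can lower $m$ and $r$ but never $s$. All the dependence on the character orbit is therefore concentrated in your base case $m=s$, namely $\varPhi_{n,s}(s,r)=\sum_{\rk X=r}\theta(\tr X')$ with $X'$ the top $s\times s$ block of $X\in\M_{n,s}$. This is not ``a Gauss-sum computation'': it is the extreme corner of the very matrix being computed. Already for $n=m=s$ it is the classical character sum over rank-$r$ square matrices (its value $(-1)^rq^{\binom{r}{2}}\begin{bmatrix} s \\ r \end{bmatrix}_q$ requires a separate inductive or inclusion--exclusion argument), and for $n>s$ it is harder still; no factorization into one-dimensional Gauss sums exists because of the rank restriction. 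As written, your induction does not close. Secondary, more repairable, gaps: the contiguous relation linking $K^{\Aff}_s(r;q^{-m},n;q)$, $K^{\Aff}_s(r;q^{-(m-1)},n;q)$ and $K^{\Aff}_s(r-1;q^{-(m-1)},n;q)$ is asserted rather than proved (you flag this yourself), and your $s=0$ consistency check presupposes the count $|\mathcal{O}_{n,m}(r)|$, which in the paper is itself an output of the method, (\ref{mat-kidou}), not an input.

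The paper sidesteps your corner problem by a different specialization of the same diagram: $\pi\colon\M_{n,m}\to\M_{n-1,m-1}$ deletes the last row \emph{and} column, and the intersection character is the nontrivial $\theta_e$ with $e=(0\mid 0,0,1)$, so that ${}^t\pi(b)+e=(b\mid 0,0,1)$ and $\hat{\pi}$ \emph{shifts} the character parameter, $\tilde{v}=v+1$. Theorem \ref{main3} then yields the backward recursion (\ref{mat-koutai}), while Theorem \ref{main4} (the same diagram read through the inverse transform, which in your setup you never exploit) yields the independent forward recursion (\ref{mat-zensin}); setting $f_N(y,x)=\varPhi_{N,N+m-n}(y,x)$ these are BPR (\ref{koutai}) and FPR (\ref{zensin}) with $a=b=c=1$, $d=q^{m-n}$, $t=q$, and Prop.\ \ref{zenkasikitoku}(3) solves them in closed form from the genuinely trivial seed $\varPhi_{0,m-n}(0,0)=1$, the hypergeometric series emerging automatically from the general solution (\ref{zensintoku}) rather than from a verified contiguous relation. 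The structural lesson is that a nontrivial intersection character is what buys a recursion in $s$; to repair your argument you would either have to prove the corner evaluation $\varPhi_{n,s}(s,r)$ independently, or adjoin a second, $s$-lowering recursion such as (\ref{mat-koutai}) --- at which point you have essentially reconstructed the paper's proof.
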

\noindent
The next example is the case $A=\A_{n}(\bbf)$ of $n \times n$-sized alternating matrices with an action of $G=\G_n(\bbf)$.
Then also the canonical matrix elements are expressed by Affine $q$-Krawtchouk polynomials (see Prop.\ref{alt-me}).
The last example is the case $A=\S_n$ of $n \times n$-sized symmetric matrices with actions of $G=\G_n$ or $G=\bbf^\times \times \G_n$.
In this example only, we calculate matrix elements under adequately changed bases from the canonical bases.
Then again they are expressed by Affine $q$-Krawtchouk polynomials.
See sec.\ref{5-1} for the base change, and see Prop.\ref{psi1}, Prop.\ref{psi2}, Prop.\ref{psi3} and Prop.\ref{psi4} in sec.\ref{5-4} for the values of the matrix elements.

Calculations in the examples above are all based on a commutative diagram which combines two settings of group-invariant Fourier transformations.
Let $(A, G)$ and $(B, H)$ be pairs of finite Abelian groups and acting groups, $\pi: A \to B$ be a group homomorphism, and $\zeta \in \hat{A}$ be any character.
Define $\hat{\pi}_{\zeta}: \hat{B} \to \hat{A}$ by
\begin{equation}
 \label{intro-hatpi}
\hat{\pi}_{\zeta}\xi =\; (\xi \circ \pi)\cdot \zeta \qquad (\xi \in \hat{B}),
\end{equation}
and assume it preserves $G$- and $H$-orbits 
 (or some equivalent conditions ($\natural$) in sec.\ref{2-1}).
Then we can define transformations $\pi^{\zeta}_{*}: \mathbb{C}[_G\backslash A] \rightarrow \mathbb{C}[_H\backslash B]$ and
$\hat{\pi}_{\zeta}^*: \mathbb{C}[_G\backslash \hat{A}] \rightarrow \mathbb{C}[_H\backslash\hat{B}]$ by
\begin{equation}
 \label{intro-osidasi}
\pi^{\zeta}_{*}\varphi(b)=\; \sum_{a \in \pi^{-1}(b)} \overline{\zeta(a)} \varphi(a)\qquad (\varphi \in \mathbb{C}[_G\backslash\!A],\; b \in B),
\end{equation}
and
$
\hat{\pi}_{\zeta}^* \psi=\psi \circ \hat{\pi}_\zeta\;( \psi \in \mathbb{C}[_G\backslash\!\hat{A}]),
$
and we have a commutative diagram as follows:
\begin{equation}
 \label{intro-d2}
\begin{CD}
\mathbb{C}[_G\!\backslash A] @>{\mathcal{F}_A}>>\mathbb{C}[_G\!\backslash \hat{A}] \\
@V{\pi^{\zeta}_{*}}V{\qquad\quad\circlearrowright}V  @VV{\hat{\pi}_{\zeta}^*}V \\
\mathbb{C}[_H\!\backslash B] @>>{\mathcal{F}_B}> \mathbb{C}[_H\!\backslash \hat{B}]
\end{CD}
\end{equation}
In the specific considerations, we apply the diagram (\ref{intro-d2}) to different sized spaces, like $\bbf^n$ and $\bbf^{n-1}$ as $A$ and $B$.
Then we can derive recursions of canonical matrix elements (or matrix elements under changed bases) and solve them.
As a remark, the diagram (\ref{intro-d2}) gives not only the relation $\hat{\pi}_\zeta^* \circ \calf_A=\calf_B \circ \pi_*^\zeta$ but also the relation $\pi^\zeta_* \circ \bar{\calf}_A=\bar{\calf}_B \circ \hat{\pi}^*_\zeta$ concerning inverse transformations, and each relation gives independent recursion from the other.
In order to solve the matrix of $\pi_*^\zeta$, we need some calculations in each example.
We can also induce some properties of the matrix elements in our considerations.
In the examples of $\bbf^n$ and $\M_{n,m}(\bbf)$,
we derive orthogonality relations and generating functions of Krawtchouk and Affine $q$-Krawtchouk polynomials respectively (Although they are already known \cite{Koe}).


We also investigate concrete forms of the inverse transformation $\bar{\calf}$.  
If $A$ is a vector space over $\mathbb{F}$ and the action of $G$ has a good property, then we see $\calf$ and $\bar{\calf}$ have almost the same forms.
Thus for a function $\varphi \in \mathbb{C}[_G\!\backslash A]$, let $\hat{\varphi}$ be $\mathcal{F}\varphi$ with a small adjustment, then $\hat{\hat{\varphi}}=\varphi$ holds.
Furthermore in some examples, we can get $q$-analogues of one fundamental inversion formula as follows:
\begin{pp}
 \label{mori}
\; For a function $\varphi:\{0,1, \dots,n\}\longrightarrow\mathbb{C}$, if we define
\begin{equation}
\label{morimori}
\hat{\varphi}(s)=\sum_{r=0}^{s}(-1)^{r}\binom{s}{r} \varphi(r) \qquad(s=0,1,\dots,n),
\end{equation}
then $\hat{\hat{\varphi}}=\varphi$ holds.
\end{pp}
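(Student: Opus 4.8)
The plan is to prove $\hat{\hat\varphi}=\varphi$ by a direct manipulation of the defining double sum; equivalently, this amounts to checking that the lower-triangular matrix $M=\bigl((-1)^{r}\binom{s}{r}\bigr)_{0\le s,r\le n}$ satisfies $M^{2}=I$, so that the transform $\varphi\mapsto\hat\varphi$ is an involution. Fix $t\in\{0,1,\dots,n\}$. Substituting the definition \eqref{morimori} into itself gives
\[
\hat{\hat\varphi}(t)=\sum_{s=0}^{t}(-1)^{s}\binom{t}{s}\,\hat\varphi(s)=\sum_{s=0}^{t}\sum_{r=0}^{s}(-1)^{s+r}\binom{t}{s}\binom{s}{r}\,\varphi(r),
\]
and the goal is to show that the coefficient of each $\varphi(r)$ equals the Kronecker delta $\delta_{r,t}$.

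First I would interchange the order of summation over the triangle $0\le r\le s\le t$, letting $r$ run from $0$ to $t$ and $s$ from $r$ to $t$, and pull the factor $(-1)^{r}\varphi(r)$ outside:
\[
\hat{\hat\varphi}(t)=\sum_{r=0}^{t}(-1)^{r}\varphi(r)\sum_{s=r}^{t}(-1)^{s}\binom{t}{s}\binom{s}{r}.
\]
Next I would apply the subset-of-a-subset identity $\binom{t}{s}\binom{s}{r}=\binom{t}{r}\binom{t-r}{s-r}$ to factor $\binom{t}{r}$ out of the inner sum and reindex by $j=s-r$, which turns the inner sum into $(-1)^{r}\binom{t}{r}\sum_{j=0}^{t-r}(-1)^{j}\binom{t-r}{j}$.

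The crux is then the collapse of the alternating binomial sum $\sum_{j=0}^{m}(-1)^{j}\binom{m}{j}=(1-1)^{m}$, which vanishes for $m>0$ and equals $1$ for $m=0$. With $m=t-r$ this annihilates every term with $r<t$, leaving only $r=t$, for which the inner sum is $(-1)^{t}\binom{t}{t}=(-1)^{t}$. Hence
\[
\hat{\hat\varphi}(t)=(-1)^{t}\varphi(t)\cdot(-1)^{t}=\varphi(t),
\]
which is the assertion. I do not anticipate any genuine obstacle here: the argument uses only the subset identity and the binomial theorem, and the single substantive step is the vanishing of the alternating sum that enforces the delta. If one prefers a conceptual route, the same identity can be read off from the general Fourier inversion $\bar{\calf}\calf=\mathrm{id}$ specialized to the relevant example, of which \eqref{morimori} is the classical, non-$q$ shadow; but the elementary double-sum computation above is self-contained and is what I would write out.
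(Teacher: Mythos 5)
Your computation is correct and complete: after interchanging the two summations over the triangle $0\le r\le s\le t$, the identity $\binom{t}{s}\binom{s}{r}=\binom{t}{r}\binom{t-r}{s-r}$ and the collapse $\sum_{j=0}^{m}(-1)^{j}\binom{m}{j}=(1-1)^{m}=\delta_{m,0}$ do exactly what you say, yielding the coefficient $\delta_{r,t}$ of $\varphi(r)$, i.e.\ $M^{2}=I$ for your lower-triangular matrix $M$. It is worth noting, however, that the paper never writes a proof of Prop.~\ref{mori} at all: it is stated in the introduction as the classical ``fundamental inversion formula'' of which the paper's results are $q$-analogues, and the route the paper implicitly suggests runs in the opposite direction from yours. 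Namely, one first proves the $q$-version of the involution from the group-invariant Fourier inversion $\bar{\calf}\calf=\mathrm{id}$ on $\mathbb{C}[\,_{G_n}\!\backslash\,\mathbb{F}^n]$, which gives the matrix identity $q^{-n}\varPhi_n^{2}=I$ via the reversible formula (\ref{sin-hanten}); since Cor.~\ref{seisitu1}(1) shows the entries $\varPhi_n(s,r)$ are polynomials in $q$ and the identity holds at every prime power, it holds identically in $q$, and specializing at $q=1$, where Cor.~\ref{seisitu1}(2) gives $\varPhi_n(s,r)\to(-1)^{r}\binom{s}{r}$ for $r\le s$ and $0$ otherwise, recovers precisely $M^{2}=I$ --- the conceptual route you gesture at in your final sentence. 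The trade-off is clear: your double-sum argument is elementary and self-contained, which is appropriate since Prop.~\ref{mori} appears before any of the paper's machinery is built, while the $q\to 1$ specialization buys uniformity --- the same limiting argument applied to Cor.~\ref{seisitu2} and Cor.~\ref{seisitu3} exhibits the identical classical inversion as the common $q=1$ shadow of the transforms on $\M_{n,m}$ and $\A_n$ --- at the cost of needing the polynomiality in $q$ to pass from prime powers to $q=1$.
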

\noindent
Examples are given in Corollaries \ref{seisitu1}, \ref{seisitu2} and \ref{seisitu3}.

We should remark that the canonical matrix elements are related to the zonal spherical functions of a finite Gelfand pair $(A \rtimes G, G)$.
For each orbit $\mathcal{P} \in\, _G\! \backslash \hat{A}$, the spherical representation $W_{\mathcal{P}} \subset \mathbb{C}[A]$ is constructed.
The zonal spherical function $\omega_{\mathcal{P}}$ corresponding to $W_{\mathcal{P}}$ is
the unique $G$-invariant function in $W_{\mathcal{P}}$ satisfying $\omega_{\mathcal{P}}(0)=1$.
Then the relation
\begin{equation}
 \label{intro-kyu}
\omega_{\mathcal{P}}(\mathcal{O})
=\frac{1}{|\mathcal{O}|} \overline{ \varPhi(\mathcal{P}, \mathcal{O})}
\end{equation}
holds.

%

In preceeding reserches, 
our matrix elements are studied as spherical functions or eigenvalues of association shemes.
Dunkl \cite{Dun} and Koornwinder \cite{K} gave the interpretations of Krawtchouk polynomials as the spherical functions on the wreath product of symmetric groups.
This result is the same as the canonical matrix elements of our example of $\bbf^n$, since the actions can be regrded as equivalent.
According to Delsarte \cite{D} and Delsarte and Goethals \cite{DG}, Affine $q$-Krawtchouk polynomials are the eigenvalues of association shemes of bilinear and alternating bilinear forms over $\mathbb{F}$.
And Stanton \cite{S} pointed out their results can be seen as spherical functions on
$\M_{n,m}(\bbf)$ and $\A_{n}(\bbf)$ respectively.
As for the example of $\S_n(\bbf)$, Egawa \cite{Ega} constructed an association scheme about quadratic forms.
The eigenvalues of it are similar to our matrix elements.
Thus our canonical matrix elements had been culculated in foregoing reserches.
But in this paper, we may give new interpretations and
another derivations which are based on the group-invariant Fourier transformations on a finite field.


\vspace{0.1in}\noindent
\underline{NOTATIONS.}\; For a set $S$, let both $|S|$ and $\sharp S$ denote the cardinality of $S$.
For any $x$ and $y$, let $\delta_{xy}$ denote Kronecker's delta, that is,
$$
\delta_{xy}=\; \begin{cases}
1 & (x=y) \\
0 & (x \neq y).
\end{cases}
$$


\newpage
%

\section{Fourier Transformation and zonal spherical functions}\label{one}

Throughout this section, let $A$ be a finite Abelian group and $G$ be a finite group which has an action $\rho$ on $A$ as group isomorphisms.
In this section, we define Fourier transformation of $G$-invariant functions on $A$ in general.
And we investigate fundamental properties of the transformation.
Especially we describe relations with the zonal spherical functions of a finite Gelfand pair $(A \rtimes G, G)$.

\subsection{Group-invariant Fourier transformation on a finite Abelian group}

Let $\hat{A}$ be the character group of $A$,
$\mathbb{C}[A]$ and $\mathbb{C}[\hat{A}]$ be $\mathbb{C}$-vector spaces of complex valued functions on $A$ and $\hat{A}$ respectively.
In general, the function $\varphi \in \mathbb{C}[A]$ is transformed into $\mathcal{F} \varphi \in \mathbb{C}[\hat{A}]$ by {\bf Fourier transformation} as follows:
\begin{equation}
 \label{f-henkan}
\mathcal{F}\varphi(\xi)\;=\; \sum_{a \in A} \overline{\xi(a)} \varphi(a) \qquad(\xi \in \hat{A}),
\end{equation}
where the overline means complex conjugate.
The inverse transformation is given as follows: For $\psi \in \mathbb{C}[\hat{A}]$,
\begin{equation}
 \label{gyaku-henkan}
\bar{\mathcal{F}}\psi(a) \;=\; \frac{1}{|A|} \sum_{\xi \in \hat{A}} \xi(a) \psi(\xi) \qquad (a \in A).
\end{equation}
The reversible relations $\bar{\mathcal{F}} \mathcal{F} \varphi = \varphi$ and $\mathcal{F} \bar{\mathcal{F}} \psi = \psi$ are easily verified.

Now let us consider $G$-action on $A$.
Then it induces the contragredient action $\hat{\rho}$ on $\hat{A}$ by $\hat{\rho}(g)\xi = \xi \circ \rho(g^{-1}),\; \xi \in \hat{A},\; g \in G$.
These actions are brought up to $G$-representations $(\rho, \mathbb{C}[A])$ and $( \hat{\rho}, \mathbb{C}[\hat{A}])$  respectively (we use the same letters $\rho$ and $\hat{\rho}$).
We can verify that the transformation $\mathcal{F}$ is an intertwining operator of these representations, 
that is, $\mathcal{F} \circ \rho(g) =\, \hat{\rho}(g) \circ \mathcal{F}$ holds for $\forall g \in G$.
Therefore $\mathcal{F}$ preserves $G$-invariance of functions, and so does $\bar{\mathcal{F}}$.
Let us write $_G\! \backslash A$ for all $G$-orbits of $A$ and regard $\mathbb{C}[_G\! \backslash A]$ as a subspace of all $G$-invariant functions of $\mathbb{C}[A]$.
Likewise $_G\! \backslash \hat{A}$ and $\mathbb{C}[_G\! \backslash \hat{A}]$ are.
Then we can consider the group-invariant Fourier transformations
\begin{equation}
 \label{g-f}
\mathcal{F}:\mathbb{C}[_G\! \backslash A] \rightarrow \mathbb{C}[_G\! \backslash \hat{A}],
\end{equation}
\begin{equation}
 \label{g-barf}
\bar{\mathcal{F}}:\mathbb{C}[_G\! \backslash \hat{A}] \rightarrow \mathbb{C}[_G\! \backslash A],
\end{equation}
by the restriction of $\calf$ and $\bar{\calf}$.
In this paper we investigate properties of these transformations.

\subsection{Canonical matrix}\label{1-2}

For any set $S \subset A$, let $\chi_{S}$ be its characteristic function on $A$. 
Then $\mathbb{C}[_G\! \backslash A]$ has a $\mathbb{C}$-basis
$
\{ \chi_{\mathcal{O}} \mid \mathcal{O} \in\, _G\!\backslash A \}$.
Likewise 
$
\{ \chi_{\mathcal{P}} \mid \mathcal{P} \in\, _G\!\backslash \hat{A} \}$ consists a basis of $\mathbb{C}[_G\! \backslash \hat{A}]$.
We call them canonical bases.
Let $\varPhi= \left( \varPhi(\mathcal{P}, \mathcal{O})\right)$ be the matrix of the transformation $\mathcal{F}$ in (\ref{g-f}) on these bases.
We call it {\bf canonical matrix} of the pair $(A, G)$.
By definition,
\begin{equation}
 \label{me}
\varPhi(\mathcal{P}, \mathcal{O})=\; \mathcal{F}\chi_{\mathcal{O}}(\mathcal{P}) =\; \sum_{a \in \mathcal{O}} \overline{\xi(a)} \qquad (\mathcal{O} \in\, _G\! \backslash A,\; \mathcal{P} \in\, _G\! \backslash \hat{A},\; \xi \in \mathcal{P}).
\end{equation}
On the other hand let $\bar{\varPhi}=\left( \bar{\varPhi}(\mathcal{O}, \mathcal{P}) \right)$ be the matrix of $|A|\bar{\mathcal{F}}$ in (\ref{g-barf}) on the same bases.
Then by definition,
\begin{equation}
 \label{barme}
\bar{\varPhi}(\mathcal{O}, \mathcal{P})
=\; |A|\bar{\mathcal{F}} \chi_{\mathcal{P}}(\mathcal{O})
=\; \sum_{\xi \in \mathcal{P}} \xi(a)
\qquad
(\mathcal{O} \in\, _G\! \backslash A,\; \mathcal{P} \in\, _G\! \backslash \hat{A},\; a \in \mathcal{O}).
\end{equation}
For any $G$-invariant functions $\varphi \in \mathbb{C}[_G\!\backslash A]$ and $\psi \in \mathbb{C}[_G\!\backslash \hat{A}]$, Fourier transformations are given as follows:
\begin{eqnarray}
 \label{ghuhen-henkan}
\mathcal{F}\varphi(\mathcal{P})=
\; \sum_{\mathcal{O} \in \, _G\! \backslash A} \varPhi(\mathcal{P}, \mathcal{O}) \varphi(\mathcal{O}) \qquad (\mathcal{P} \in \,_G\!\backslash \hat{A}),\\
\label{ghuhen-gyaku-henkan}
\bar{\mathcal{F}}\psi(\mathcal{O})=
\; \frac{1}{|A|}\sum_{\mathcal{P} \in \, _G\! \backslash \hat{A}} \bar{\varPhi}(\mathcal{O}, \mathcal{P}) \psi(\mathcal{P}) \qquad (\mathcal{O} \in \,_G\!\backslash A).
\end{eqnarray}
By (\ref{me}) and (\ref{barme}), we can verify
\begin{equation}
 \label{me-barme}
\bar{\varPhi}(\mathcal{O},\mathcal{P})=\; \frac{|\mathcal{P}|}{|\mathcal{O}| }\overline{\varPhi(\mathcal{P},\mathcal{O})}
\qquad
(\mathcal{O} \in\, _G\! \backslash A,\; \mathcal{P} \in\, _G\! \backslash \hat{A}).
\end{equation}
The reversible equation $\mathcal{F} \bar{\mathcal{F}}=$id and equation (\ref{me-barme}) induces an orthogonality relation of canonical matrix elements $\varPhi(\mathcal{P}, \mathcal{O})$ as follows:
\begin{equation}
 \label{me-cho}
\sum_{\mathcal{P} \in \,_G\! \backslash \hat{A}} |\mathcal{P}|\, \varPhi(\mathcal{P},\mathcal{O}) \overline{\varPhi(\mathcal{P},\mathcal{O}')}
=\; \delta_{\mathcal{O} \mathcal{O}'} |\mathcal{O}||A|
\qquad (\mathcal{O},\; \mathcal{O}' \in\,_G\! \backslash A).
\end{equation}
But more generally, we can derive a kind of `multi-orthogonality' relation as follows:
\begin{pp}
 \label{taju}
Let $k \geq 1$ and $\mathcal{O}_1, \dots, \mathcal{O}_k, \mathcal{O} \in \,_G\!\backslash A$.
Then we have
$$
\sum_{\mathcal{P} \in \,_G\! \backslash \hat{A}}|\mathcal{P}|\, \varPhi(\mathcal{P}, \mathcal{O}_1)\cdots \varPhi(\mathcal{P}, \mathcal{O}_k)\overline{\varPhi(\mathcal{P}, \mathcal{O})}
=\; \sharp \{(a_1,\cdots, a_k) \in \mathcal{O}_1 \times \cdots \times \mathcal{O}_k \mid a_1+ \cdots +a_k \in \mathcal{O}\}\;|A|.
$$
\end{pp}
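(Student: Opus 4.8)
The plan is to prove this by interpreting each factor $\varPhi(\mathcal{P},\mathcal{O}_i)$ and $\overline{\varPhi(\mathcal{P},\mathcal{O})}$ as a character sum, multiplying everything out, and using the orthogonality of characters to collapse the sum over $\mathcal{P}$. First I would fix a character $\xi \in \mathcal{P}$ in each orbit and recall from \eqref{me} that $\varPhi(\mathcal{P},\mathcal{O}_j)=\sum_{a_j \in \mathcal{O}_j}\overline{\xi(a_j)}$, while from \eqref{barme} we have $\overline{\varPhi(\mathcal{P},\mathcal{O})}=\sum_{a \in \mathcal{O}}\xi(a)$. The key point is that these expressions are independent of the choice of representative $\xi \in \mathcal{P}$, which is exactly the $G$-invariance that makes the canonical matrix well-defined.

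The main device is to convert $\sum_{\mathcal{P}}|\mathcal{P}|\,(\cdots)$, a weighted sum over orbits in $\hat{A}$, into an unweighted sum over all characters $\xi \in \hat{A}$. Since each factor only depends on $\xi$ through the orbit $\mathcal{P}$ containing it, summing over $\xi \in \hat{A}$ automatically supplies the multiplicity $|\mathcal{P}|$ for each orbit. Thus I would rewrite the left-hand side as
\begin{equation}\nonumber
\sum_{\xi \in \hat{A}}\;\Bigl(\sum_{a_1 \in \mathcal{O}_1}\overline{\xi(a_1)}\Bigr)\cdots\Bigl(\sum_{a_k \in \mathcal{O}_k}\overline{\xi(a_k)}\Bigr)\Bigl(\sum_{a \in \mathcal{O}}\xi(a)\Bigr).
\end{equation}
Expanding the product and interchanging the order of summation, this becomes $\sum_{(a_1,\dots,a_k,a)}\sum_{\xi \in \hat{A}}\overline{\xi(a_1+\cdots+a_k)}\,\xi(a)$, where the outer sum runs over $\mathcal{O}_1 \times \cdots \times \mathcal{O}_k \times \mathcal{O}$. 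Here I use the homomorphism property $\xi(a_1)\cdots\xi(a_k)=\xi(a_1+\cdots+a_k)$ on the additive group $A$.

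The inner sum $\sum_{\xi \in \hat{A}}\overline{\xi(a_1+\cdots+a_k)}\,\xi(a)=\sum_{\xi}\xi(a-(a_1+\cdots+a_k))$ is handled by the standard orthogonality of characters, which yields $|A|$ when $a_1+\cdots+a_k=a$ and $0$ otherwise. Therefore the whole expression collapses to $|A|$ times the number of tuples $(a_1,\dots,a_k,a)$ with $a_1+\cdots+a_k=a$ and $a \in \mathcal{O}$; for each such tuple $a$ is determined by the $a_i$, so this equals $|A|$ times the count of $(a_1,\dots,a_k) \in \mathcal{O}_1\times\cdots\times\mathcal{O}_k$ with $a_1+\cdots+a_k \in \mathcal{O}$, as claimed. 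I do not anticipate a genuine obstacle here; the only point requiring care is the bookkeeping in passing from the orbit sum weighted by $|\mathcal{P}|$ to the full sum over $\hat{A}$, and verifying that each character-sum factor is indeed a well-defined function of the orbit $\mathcal{P}$ rather than of the chosen representative $\xi$. Specializing to $k=1$ recovers the orthogonality relation \eqref{me-cho}, which is a useful consistency check.
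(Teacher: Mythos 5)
Your proof is correct and is essentially the paper's argument run in the opposite direction: the paper starts from the count $\sum \chi_{\mathcal{O}}(a_1+\cdots+a_k)$ and expands $\chi_{\mathcal{O}}$ via the Fourier inversion $\chi_{\mathcal{O}}=\bar{\mathcal{F}}(\mathcal{F}\chi_{\mathcal{O}})$, while you start from the weighted orbit sum, replace $\sum_{\mathcal{P}}|\mathcal{P}|(\cdots)$ by a sum over all $\xi \in \hat{A}$, and collapse with the character orthogonality $\sum_{\xi \in \hat{A}}\xi(b)=|A|\,\delta_{b,0}$ --- which is exactly the content of $\bar{\mathcal{F}}\mathcal{F}=\mathrm{id}$ that the paper uses. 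A minor advantage of your direction is that it lands exactly on the stated identity, whereas the paper's computation produces the complex conjugate of the left-hand side and tacitly uses that the count is real.
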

\noindent
\underline{Remark}.\; When $k=1$, since $\sharp \{(a_1) \in \mathcal{O}_1 \mid a_1 \in \mathcal{O}\}=\delta_{\mathcal{O}_1 \mathcal{O}}|\mathcal{O}|$, it becames back to (\ref{me-cho}).
\begin{proof}
\; First remark that
$\displaystyle
\chi_{\mathcal{O}}=\; \frac{1}{|A|}\sum_{\mathcal{P} \in \,_G\! \backslash \hat{A}}\varPhi(\mathcal{P},\mathcal{O})\sum_{\xi \in \mathcal{P}}\xi,
$
because of $\chi_{\mathcal{O}}=\bar{\mathcal{F}}(\mathcal{F} \chi_{\mathcal{O}})$ and (\ref{me}),(\ref{barme}),(\ref{ghuhen-henkan}) and(\ref{ghuhen-gyaku-henkan}).
Therefore
\begin{eqnarray}\nonumber
\begin{split}
\{(&a_1,\cdots, a_k) \in \mathcal{O}_1 \times \cdots \times \mathcal{O}_k \mid a_1+ \cdots +a_k \in \mathcal{O}\}
=\; \sum_{a_1 \in \mathcal{O}_1}\dots \sum_{a_k \in \mathcal{O}_k} \chi_{\mathcal{O}}(a_1+\cdots +a_k) \\
&=\; \frac{1}{|A|}\sum_{a_1 \in \mathcal{O}_1}\dots \sum_{a_k \in \mathcal{O}_k} 
\sum_{\mathcal{P} \in \,_G\! \backslash \hat{A}}\varPhi(\mathcal{P},\mathcal{O})\sum_{\xi \in \mathcal{P}}\xi(a_1+\cdots +a_k)
\\
&
=\; \frac{1}{|A|} \sum_{\mathcal{P} \in \,_G\! \backslash \hat{A}}\varPhi(\mathcal{P},\mathcal{O})\sum_{\xi \in \mathcal{P}}\sum_{a_1 \in \mathcal{O}_1}\xi(a_1) \dots \sum_{a_k \in \mathcal{O}_k}\xi(a_k)
=\; \frac{1}{|A|} \sum_{\mathcal{P} \in \,_G\! \backslash \hat{A}}\varPhi(\mathcal{P},\mathcal{O}) \overline{\varPhi(\mathcal{P},\mathcal{O}_1) \cdots \varPhi(\mathcal{P},\mathcal{O}_k)}\,|\mathcal{P}|.
\end{split}
\end{eqnarray}

\end{proof}


\subsection{Relations with zonal spherical functions}
 \label{1-3}

The group-invariant Fourier transformation is related with zonal spherical functions in harmonic analysis on a finite symmetric space.
In this subsection, we summarize this relationship refering to \cite{Mac}, \cite{T}. 

In general, a pair $(\mathbb{G}, G)$ of a finite group $\mathbb{G}$ and its subgroup $G$ is called Gelfand pair when
any irreducible representation $(\tau, V)$ of $\mathbb{G}$ has at most $1$-dimensional $G$-invariant subspace $V^G= \{v \in V \mid \forall g \in G,\; \tau(g)v = v  \}$.
It is equivalent to saying that, the $\mathbb{G}$-representation $\mathbb{C}[\mathbb{G}/G]$ is multiplicity free, that is, decomposed into irreducible representations with multiplicity 1, where $\mathbb{C}[\mathbb{G}/G]$ is 
a subrepresentation of left regular representation of $\mathbb{G}$.
For a Gelfand pair $(\mathbb{G}, G)$,
an irreducible $\mathbb{G}$-representation $(\tau, V)$ such that dim$V^G=1$ is called {\bf spherical representation}.
It is uniquely realized as a subrepresentation $W_{\tau}$ in $\mathbb{C}[\mathbb{G}/G]$.
There is a unique function $\omega_{\tau} \in (W_{\tau})^G$ with $\omega_{\tau}(1_{\mathbb{G}})=1$.
It is called {\bf zonal spherical function} corresponding to $(\tau,V)$.
More specifically, it is given by the formula
\begin{equation}
 \label{kyu}
\omega_{\tau}(x)\,=\, (\tau(x^{-1})v_0,\; v_0) \qquad (x \in \mathbb{G}),
\end{equation}
where $(\;,\;)$ is an inner product on $V$ by which it is unitary, and $v_0 \in V^G$ is a unique element such that $(v_0,\;v_0)=1$.

Now we return to our setting that a finite group $G$ acts on a finite Abelian group $A$.
If we put $\mathbb{G}=A \rtimes G$, a semi-direct product corresponding to the action, then $(\mathbb{G},G)$ is a Gelfand pair, or more strongly, a weakly symmetric Gelfand pair.
In fact, if we set a group-automorphism $\sigma:\mathbb{G} \rightarrow \mathbb{G}$ by $(b,g) \mapsto (-b,g)$, then $x^{-1} \in G \sigma(x) G$ holds for all $x \in \mathbb{G}$.
Refer \cite{T} for that this condition implies it is a Gelfand pair.
Identifing the homogeneous space $\mathbb{G}/G$ with $A$, the transitive $\mathbb{G}$-action $\tau$ on $A$ is given by
\begin{equation}
 \label{boldg-act}
\tau(b,g)a=\; b+\rho(g)a \qquad (\;(b,g)\in \mathbb{G},\; a \in A),
\end{equation}
and corresponding $\mathbb{G}$-representation $(\tau, \mathbb{C}[A])$ is defined.
The decomposition of it into spherical representations is given as follows:

\begin{pp}
 \label{kyupp}
\; $\mathbb{C}[A]$ is decomposed into spherical representations of $(\mathbb{G}, G)$ as
\begin{equation}
 \label{bunkai}
\mathbb{C}[A]=\; \bigoplus_{\mathcal{P} \in\,_G\! \backslash \hat{A}} \, W_{\mathcal{P}},\quad \text{where}\quad W_{\mathcal{P}}=\; \rm{Span} \mathcal{P}=\; \{ \sum_{\xi \in \mathcal{P}} c_{\xi} \xi|\; c_{\xi} \in \mathbb{C} \}.
\end{equation}
The zonal spherical function $\omega_{\mathcal{P}}$ corresponding to $W_{\mathcal{P}}$ is given by
\begin{equation}
 \label{omegap}
\omega_{\mathcal{P}}=\; \frac{1}{|\mathcal{P}|} \sum_{\xi \in \mathcal{P}} \xi.
\end{equation}
\end{pp}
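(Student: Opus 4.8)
The plan is to establish the decomposition \eqref{bunkai} and the formula \eqref{omegap} by exploiting the explicit description of $\hat{A}$ as the set of irreducible characters, together with the general formula \eqref{kyu} for zonal spherical functions. First I would recall that $\mathbb{C}[A]$ has $\hat{A}$ as an orthogonal basis (the characters of $A$), so that $\mathbb{C}[A] = \bigoplus_{\xi \in \hat{A}} \mathbb{C}\xi$ as a vector space. The key point is to understand how the $\mathbb{G}$-action $\tau$ from \eqref{boldg-act} permutes and scales these one-dimensional subspaces. For $(b,g) \in \mathbb{G}$ and a character $\xi \in \hat{A}$, a direct computation of $\tau(b,g)\xi$ evaluated at a point $a \in A$ (using that $(\tau(b,g)\xi)(a) = \xi(\tau(b,g)^{-1}a) = \xi(\rho(g^{-1})(a-b))$) shows that $\tau(b,g)\xi = \overline{\xi(-b)}\,(\hat{\rho}(g)\xi)$, i.e. up to a scalar of modulus one, $\mathbb{G}$ sends $\xi$ to the character $\hat{\rho}(g)\xi$ in the same $G$-orbit $\mathcal{P}$. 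Hence each $W_{\mathcal{P}} = \mathrm{Span}\,\mathcal{P}$ is $\mathbb{G}$-stable, and $\mathbb{C}[A] = \bigoplus_{\mathcal{P}} W_{\mathcal{P}}$ follows from the orbit partition of $\hat{A}$.

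Next I would verify that each $W_{\mathcal{P}}$ is irreducible and is the spherical representation. Since $(\mathbb{G}, G)$ is a Gelfand pair (as stated in the excerpt), $\mathbb{C}[A] \cong \mathbb{C}[\mathbb{G}/G]$ is multiplicity-free, so the decomposition into $\mathbb{G}$-irreducibles is unique; it therefore suffices to show each $W_{\mathcal{P}}$ is $\mathbb{G}$-irreducible. For this I would check that $\mathbb{G}$ acts transitively on the lines $\mathbb{C}\xi$, $\xi \in \mathcal{P}$ (immediate from the scalar computation above, since $G$ acts transitively on $\mathcal{P}$), and combine this with the fact that the diagonal subgroup $A \subset \mathbb{G}$ already acts on each $\xi$ by distinct characters within a fixed orbit, forcing any $\mathbb{G}$-stable subspace of $W_{\mathcal{P}}$ to be spanned by a subset of $\mathcal{P}$ that is $G$-stable — hence all of $\mathcal{P}$ or nothing. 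Then I would identify the spherical vector: a $G$-invariant element of $W_{\mathcal{P}}$ must be a $G$-invariant linear combination of the $\xi \in \mathcal{P}$, and since $G$ permutes $\mathcal{P}$ transitively via $\hat{\rho}$, the invariants are exactly the scalar multiples of $\sum_{\xi \in \mathcal{P}} \xi$, confirming $\dim W_{\mathcal{P}}^G = 1$ and that $W_{\mathcal{P}}$ is spherical.

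Finally, for the explicit formula \eqref{omegap} I would normalize. The candidate $\omega_{\mathcal{P}} = \frac{1}{|\mathcal{P}|}\sum_{\xi \in \mathcal{P}}\xi$ is $G$-invariant by the transitivity just used, and evaluating at the identity coset — which under the identification $\mathbb{G}/G \cong A$ corresponds to $a = 0$ — gives $\sum_{\xi \in \mathcal{P}}\xi(0)/|\mathcal{P}| = |\mathcal{P}|/|\mathcal{P}| = 1$, matching the normalization $\omega_{\mathcal{P}}(1_{\mathbb{G}}) = 1$. Uniqueness of the zonal spherical function with this value then pins down $\omega_{\mathcal{P}}$ exactly. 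Alternatively, one can derive the same formula directly from \eqref{kyu} by taking $v_0$ proportional to $\sum_{\xi \in \mathcal{P}}\xi$ with the $L^2$-inner product on $\mathbb{C}[A]$, computing $(v_0, v_0)$ via orthogonality of characters, and evaluating the matrix coefficient.

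I expect the main obstacle to be the irreducibility of $W_{\mathcal{P}}$: while the $\mathbb{G}$-stability and the one-dimensionality of $G$-invariants are quick consequences of the scalar action computation, showing that no proper nonzero $\mathbb{G}$-subspace exists requires carefully using the action of the translation part $A$ to separate the distinct characters in $\mathcal{P}$. A clean way to finesse this is to invoke the Gelfand property to reduce to a dimension count: once $\mathbb{C}[A] = \bigoplus_{\mathcal{P}} W_{\mathcal{P}}$ with each $W_{\mathcal{P}}^G$ at most one-dimensional and the total multiplicity-free decomposition having exactly $|\,_G\backslash\hat{A}|$ irreducible summands (equal to the number of $\mathcal{P}$'s), each $W_{\mathcal{P}}$ must be a single irreducible, sidestepping the direct irreducibility argument.
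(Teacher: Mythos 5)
Your proposal is correct and takes essentially the same route as the paper's proof: characters as a basis give the vector-space decomposition, the explicit computation of $\tau(b,g)$ on a character shows each $W_{\mathcal{P}}$ is $\mathbb{G}$-stable, transitivity of $G$ on $\mathcal{P}$ (the paper does this by averaging over $G$) shows the $G$-invariants in $W_{\mathcal{P}}$ are exactly $\mathbb{C}\sum_{\xi \in \mathcal{P}}\xi$, and irreducibility comes from the Gelfand property via the multiplicity-free count --- which is precisely the step the paper leaves implicit, so your ``finesse'' matches its logic rather than departing from it. The only blemish is an immaterial scalar slip: $\tau(b,g)\xi = \overline{(\hat{\rho}(g)\xi)(b)}\cdot \hat{\rho}(g)\xi$ rather than $\overline{\xi(-b)}\cdot \hat{\rho}(g)\xi$, but only the fact that $\tau(b,g)$ carries the line $\mathbb{C}\xi$ onto $\mathbb{C}\,\hat{\rho}(g)\xi$ is actually used, so nothing breaks.
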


\begin{proof} \; As well known, characters $\hat{A}$ forms a basis of $\mathbb{C}[A]$.
Thus the decomposition (\ref{bunkai}) holds as vector spaces.
For $\forall \xi \in \mathcal{P}$, $b \in A$ and $g \in G$, 
$$
\tau(b,1_G)^{-1}\xi=\; \xi \circ \tau(b,1_G)=\; \xi(b+\,\cdot \,)=\; \xi(b) \xi \in W_{\mathcal{P}},
$$
$$
\tau(0,g)^{-1}\xi=\; \xi \circ \tau(0,g)=\; \xi \circ \rho(g) \in\mathcal{P} \subset W_{\mathcal{P}},
$$
which implies $W_{\mathcal{P}}$ is a $\mathbb{G}$-subrepresentation of $(\tau, \mathbb{C}[A])$.

Now the function $\omega_{\mathcal{P}}$ defined by (\ref{omegap}) obviously belongs to $(W_{\mathcal{P}})^G$.
Let $\displaystyle \varphi = \sum_{\xi \in \mathcal{P}} c_{\xi}\xi$ ($c_{\xi} \in \mathbb{C}$) be an arbitrary element of $(W_{\mathcal{P}})^G$.
Then  
$$
\varphi
=\; \frac{1}{|G|} \sum_{g \in G} \varphi \circ \rho(g)
=\; \frac{1}{|G|} \sum_{g \in G} \sum_{\xi \in \mathcal{P}} c_{\xi} \xi \circ \rho(g)
=\; \frac{1}{|G|} \sum_{\xi \in \mathcal{P}} c_{\xi} \sum_{g \in G} \xi \circ \rho(g)
=\; (\sum_{\xi \in \mathcal{P}} c_{\xi}) \frac{1}{|\mathcal{P}|}\sum_{\eta \in \mathcal{P}}\eta
=\; (\sum_{\xi \in \mathcal{P}} c_{\xi})\, \omega_{\mathcal{P}}.
$$
Thus $ \varphi \in \mathbb{C} \omega_{\mathcal{P}}$.
It concludes that $W_{\mathcal{P}}$ is a spherical representation and $\omega_{\mathcal{P}}$ is the zonal spherical function correponding to it.
\end{proof}
By (\ref{me-barme}) and (\ref{omegap}), a relation between canonical matrix and zonal spherical functions is induced:
\begin{col}
 \label{me-kyu}
\quad For orbits $\mathcal{O} \in\,_G\! \backslash A$ and $\mathcal{P} \in\,_G\! \backslash \hat{A}$,
$$
\omega_{\mathcal{P}}(\mathcal{O})\;=\; \frac{1}{|\mathcal{O}|}\overline{\varPhi(\mathcal{P},\mathcal{O})}.
$$
\end{col}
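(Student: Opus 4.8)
The plan is to chain together the three ingredients already established: the explicit form of the zonal spherical function in (\ref{omegap}), the orbit-sum expression for $\bar{\varPhi}$ in (\ref{barme}), and the conjugation relation (\ref{me-barme}) between $\bar{\varPhi}$ and $\varPhi$. No genuinely new computation is needed beyond bookkeeping of normalizing factors; the corollary is a formal consequence of identities proved earlier in the section.

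First I would recall from Proposition \ref{kyupp} that $\omega_{\mathcal{P}}$ lies in $(W_{\mathcal{P}})^G$, hence is a $G$-invariant function on $A$. Therefore its value is constant on each orbit and the symbol $\omega_{\mathcal{P}}(\mathcal{O})$ is well-defined, equal to $\omega_{\mathcal{P}}(a)$ for any chosen representative $a \in \mathcal{O}$. Fixing such an $a$ and inserting the formula (\ref{omegap}) gives $\omega_{\mathcal{P}}(\mathcal{O}) = \frac{1}{|\mathcal{P}|}\sum_{\xi\in\mathcal{P}}\xi(a)$.

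Next I would identify the inner sum. By (\ref{barme}), for any $a\in\mathcal{O}$ one has $\sum_{\xi\in\mathcal{P}}\xi(a) = \bar{\varPhi}(\mathcal{O},\mathcal{P})$, so $\omega_{\mathcal{P}}(\mathcal{O}) = \frac{1}{|\mathcal{P}|}\bar{\varPhi}(\mathcal{O},\mathcal{P})$. Finally, substituting the relation $\bar{\varPhi}(\mathcal{O},\mathcal{P}) = \frac{|\mathcal{P}|}{|\mathcal{O}|}\overline{\varPhi(\mathcal{P},\mathcal{O})}$ from (\ref{me-barme}) cancels the factor $|\mathcal{P}|$ and yields $\omega_{\mathcal{P}}(\mathcal{O}) = \frac{1}{|\mathcal{O}|}\overline{\varPhi(\mathcal{P},\mathcal{O})}$, which is exactly the asserted identity.

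There is essentially no hard step here. The only points requiring care — and the ones I would state explicitly — are the well-definedness of evaluating $\omega_{\mathcal{P}}$ on an orbit, which rests on its $G$-invariance established in Proposition \ref{kyupp}, and tracking the complex conjugation together with the ratio $|\mathcal{P}|/|\mathcal{O}|$ so that the two normalizations cancel precisely. Conceptually, the content is simply that $\omega_{\mathcal{P}}$ is (up to the normalization $1/|\mathcal{P}|$) the orbit-sum of characters appearing in $\bar{\varPhi}$, while $\varPhi$ encodes the conjugate orbit-sum in the variable $a$; relation (\ref{me-barme}) is the bridge that turns one into the other.
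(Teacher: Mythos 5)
Your proof is correct and follows exactly the route the paper intends: it chains the explicit formula (\ref{omegap}) for $\omega_{\mathcal{P}}$, the identification of the character orbit-sum with $\bar{\varPhi}(\mathcal{O},\mathcal{P})$ from (\ref{barme}), and the conjugation relation (\ref{me-barme}), which is precisely what the paper invokes when it derives the corollary ``by (\ref{me-barme}) and (\ref{omegap}).'' Your added remark on the well-definedness of $\omega_{\mathcal{P}}(\mathcal{O})$ via the $G$-invariance from Proposition \ref{kyupp} is a harmless and correct refinement of the same argument.
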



\subsection{Correspondence of orbits on a finite vector space}\label{1-4}

Let $\mathbb{F}$ be a finite field.
In this subsection we assume $A$ has structure of a vector space over $\mathbb{F}$ and the action $\rho$ is as $\mathbb{F}$-linear isomorphisms.
Let $\theta \in \hat{\mathbb{F}}$ be a non-trivial additive character of $\mathbb{F}$ and $\langle \;\; |\;\; \rangle$ be a non-degenerate symmetric bilinear form on $A$.
For $a \in A$, define $\theta_{a} \in \hat{A}$ by 
\begin{equation}
 \label{taiou}
\theta_a(b)=\; \theta(\langle a | b \rangle)\qquad (b \in A).
\end{equation}
Then it is easy to see that the correspondence
$\Theta: A \rightarrow \hat{A},\; a \mapsto \theta_a$
is a group isomorphism.
Next we define a property of $G$-actions which gets along well with this correspondence. 
\begin{dfn}
 \label{adjf}
The action $\rho$ is called {\bf adjoint-free} when there exists a map $t:G \rightarrow G,\; g \mapsto {}^t \! g$ such that the adjoint map of $\rho (g)$ according to the form $\langle \;\; |\;\; \rangle$ is given by $\rho (^{t} \! g)$ for all $g \in G$.
\end{dfn}

If the action $\rho$ is adjoint-free, the correspondence $\Theta$ preserves $G$-orbits of $A$ and $\hat{A}$, that is, two elements $a$ and $b$ of $A$ belong to a same orbit if and only if so do $\theta_a$ and $\theta_b$ in $\hat{A}$.
In fact, 
we can verify $\hat{\rho}(g^{-1})\theta_a=\theta_{\rho(^t\! g)a}$, which implies.
Thus using a parameter set $\Lambda$, we can write $_G\!\backslash A=\;\{\mathcal{O}(\lambda) \mid \lambda \in \Lambda \}$ and $_G\!\backslash \hat{A}=\;\{\mathcal{P}(\mu) \mid \mu \in \Lambda \}$, where $\mathcal{P}(\mu) = \Theta(\mathcal{O}(\mu))$.
Let $|\lambda| := |\mathcal{O}(\lambda)|=|\mathcal{P}(\lambda)|$ for $\lambda \in \Lambda$.
Under this notation, the canonical matrix $\varPhi=\left( \varPhi(\mu,\lambda)\right)$ of $\mathcal{F}$ has a symmetry:
\begin{equation}
 \label{me-taisho}
\frac{1}{|\lambda|} \varPhi(\mu, \lambda)=\; \frac{1}{|\mu|} \varPhi(\lambda, \mu) \qquad (\lambda, \mu \in \Lambda).
\end{equation}
In fact, since $\theta_a(b)=\theta_b(a) \; (a,b \in A)$ by symmetry of $\langle \; |\; \rangle$, we have
$\displaystyle
|\mu| \varPhi(\mu, \lambda)
= \sum_{a \in \mathcal{O}(\mu)} \sum_{b \in \mathcal{O}(\lambda)} \theta_a(b)
= \sum_{b \in \mathcal{O}(\lambda)} \sum_{a \in \mathcal{O}(\mu)} \theta_b(a)
= |\lambda| \varPhi(\lambda, \mu).
$
Let $\bar{\varPhi}=\left( \bar{\varPhi}(\lambda,\mu)\right)$ be the canonical matrix of $|A|\bar{\mathcal{F}}$.
Then by (\ref{me-barme}) and (\ref{me-taisho}), we have
\begin{equation}
 \label{barme-v}
\bar{\varPhi}(\lambda, \mu)=\; \overline{\varPhi(\lambda, \mu)} \qquad (\lambda, \mu \in \Lambda).
\end{equation}
Especially, assume all values of $\varPhi$ are real (For example, a condition $\calo = -\calo$ for $\forall \calo \in \,_G\backslash A$ is sufficient for it by (\ref{me})).
Then we have $\varPhi = \bar{\varPhi}$, and $\frac{1}{\sqrt{|A|}} \calf = \sqrt{|A|} \bar{\calf}$ by (\ref{ghuhen-henkan}) and (\ref{ghuhen-gyaku-henkan}). 
So we have a reversible formula as follows:
For a function $\varphi \in \mathbb{C}[\Lambda]$, let $\hat{\varphi}= \frac{1}{\sqrt{|A|}} \calf \varphi \in \mathbb{C}[\Lambda]$, that is,
\begin{equation}
 \label{sin-hanten}
\hat{\varphi}(\mu)=\; \frac{1}{\sqrt{|A|}} \sum_{\lambda \in \Lambda} \varPhi(\mu, \lambda) \varphi(\lambda)\qquad(\mu \in \Lambda),
\end{equation}
then $\hat{\hat{\varphi}}$ holds.

Now let $\Sigma$ be a new parameter set such that $|\Sigma|=|\Lambda|$
and  $P=(P(\lambda, \sigma))_{\lambda \in \Lambda, \sigma \in \Sigma}$ be a non-singular complex matrix.
Let's consider a change of bases from canonical basis $\{\chi_\lambda=\chi_{\mathcal{O}(\lambda)} \mid \lambda \in \Lambda \}$ to a new basis $\{e_\sigma \mid \sigma \in \Sigma\}$ of $\mathbb{C}[ _G\!\backslash A]$ by $P$, and at the same time $\{ \chi_\mu=\chi_{\mathcal{P}(\mu)} \mid \mu \in \Lambda \}$ to $\{f_\varepsilon \mid \varepsilon \in \Sigma\}$ of $\mathbb{C}[ _G\!\backslash \hat{A}]$
by complex conjugate $\overline{P}$. 
That is, define
\begin{equation}
e_\sigma = \sum_{\lambda \in \Lambda} P(\lambda,\sigma) \chi_\lambda,\quad
f_\varepsilon = \sum_{\mu \in \Lambda} \overline{P(\mu, \varepsilon)} \chi_\mu.
\end{equation}
Let $\Psi=(\Psi(\varepsilon, \sigma))$ and  $\bar{\Psi}=(\bar{\Psi}(\sigma, \varepsilon))$ be the matrices of $\mathcal{F}$ and $|A|\bar{\mathcal{F}}$ respectively on these new bases. 
Then since $\Psi = \overline{P^{-1}} \varPhi P$ and $\bar{\Psi} = P^{-1} \bar{\varPhi} \overline{P}$,
we get
\begin{equation}
 \label{bar-change-me}
\bar{\Psi}(\sigma, \varepsilon)=\; \overline{\Psi(\sigma, \varepsilon)} \qquad (\sigma, \varepsilon \in \Sigma).
\end{equation}


\section{Relations between two group-invariant Fourier transformations}

Also in this section, we think of a pair $(A, G)$ of a finite Abelian group $A$ and a finite group $G$ with an action $\rho$ on $A$ as group isomorphisms.
Adding it, let's think of another pair $(B, H)$ with an action $\tau$ in the same relation.
Here we describe relations between two group-invariant Fourier transformations
$\mathcal{F}_A:\mathbb{C}[_G\!\backslash A] \rightarrow \mathbb{C}[_G\!\backslash \hat{A}]$
and
$\mathcal{F}_B:\mathbb{C}[_H\!\backslash B] \rightarrow \mathbb{C}[_H\!\backslash \hat{B}]$.
Let $\pi: A \rightarrow B$ be a group homomorphism.


\subsection{A commutative diagram}\label{2-1}

Firstly we don't assume group actions and just consider
$\mathcal{F}_A: \mathbb{C}[A] \rightarrow \mathbb{C}[\hat{A}]$ and $\mathcal{F}_B: \mathbb{C}[B] \rightarrow \mathbb{C}[\hat{B}]$.
Let's fix an arbitrary character $\zeta \in \hat{A}$ which we call {\bf intersection character}, and define a transformation $\pi^{\zeta}_{*}: \mathbb{C}[A] \rightarrow \mathbb{C}[B]$ by
\begin{equation}
 \label{osidasi}
\pi^{\zeta}_{*}\varphi(b)=\; \sum_{a \in \pi^{-1}(b)} \overline{\zeta(a)} \varphi(a)\qquad (\varphi \in \mathbb{C}[A],\; b \in B).
\end{equation}
On the other hand, define $\hat{\pi}_{\zeta}: \hat{B} \rightarrow \hat{A}$ by
\begin{equation}
 \label{hatpi}
\hat{\pi}_{\zeta}\xi =\; (\xi \circ \pi)\cdot \zeta \qquad (\xi \in \hat{B}).
\end{equation}
Let $\hat{\pi}_{\zeta}^*$ be the pull-back of $\hat{\pi}_{\zeta}$, that is,    $\hat{\pi}_{\zeta}^*: \mathbb{C}[\hat{A}] \rightarrow \mathbb{C}[\hat{B}],\; \hat{\pi}_{\zeta}^* \psi=\; \psi \circ \hat{\pi}_{\zeta}$.
Then
\begin{pp}
\begin{equation}
\hat{\pi}_{\zeta}^* \circ \mathcal{F}_A =\; \mathcal{F}_B \circ \pi^{\zeta}_{*},
\end{equation}
that is, the following diagram is commutative:
\begin{equation}
 \label{d1}
\begin{CD}
\mathbb{C}[A] @>{\mathcal{F}_A}>>\mathbb{C}[\hat{A}] \\
@V{\pi^{\zeta}_{*}}V{\qquad\;\;\circlearrowright}V  @VV{\hat{\pi}_{\zeta}^*}V \\
\mathbb{C}[B] @>>{\mathcal{F}_B}> \mathbb{C}[\hat{B}]
\end{CD}
\end{equation}
\end{pp}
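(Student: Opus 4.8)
The plan is to verify the identity pointwise. Both composites $\hat{\pi}_{\zeta}^{*}\circ\mathcal{F}_{A}$ and $\mathcal{F}_{B}\circ\pi^{\zeta}_{*}$ send a function $\varphi\in\mathbb{C}[A]$ to a function in $\mathbb{C}[\hat{B}]$, so it suffices to show that the two resulting functions agree when evaluated at an arbitrary character $\eta\in\hat{B}$. I would expand each side directly from the defining formulas \eqref{f-henkan}, \eqref{osidasi}, and \eqref{hatpi}, and then compare.

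For the left-hand side, unwinding the pull-back gives $(\hat{\pi}_{\zeta}^{*}\mathcal{F}_{A}\varphi)(\eta)=(\mathcal{F}_{A}\varphi)(\hat{\pi}_{\zeta}\eta)$. Substituting $\hat{\pi}_{\zeta}\eta=(\eta\circ\pi)\cdot\zeta$ into the Fourier sum, and using that this character evaluated at $a$ equals $\eta(\pi(a))\,\zeta(a)$ together with the multiplicativity of complex conjugation, turns this into $\sum_{a\in A}\overline{\eta(\pi(a))}\;\overline{\zeta(a)}\;\varphi(a)$.

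For the right-hand side, I would expand $\mathcal{F}_{B}$ applied to $\pi^{\zeta}_{*}\varphi$ as a sum over $b\in B$ and then insert the definition of $\pi^{\zeta}_{*}$, obtaining the double sum $\sum_{b\in B}\sum_{a\in\pi^{-1}(b)}\overline{\eta(b)}\;\overline{\zeta(a)}\;\varphi(a)$. The one substantive observation is that the fibres $\{\pi^{-1}(b)\}_{b\in B}$ partition $A$ (fibres over points outside the image being empty), so this double sum collapses to a single sum over $a\in A$; and for $a\in\pi^{-1}(b)$ we have $b=\pi(a)$, hence $\overline{\eta(b)}=\overline{\eta(\pi(a))}$. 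This produces exactly the expression found for the left-hand side, so the two composites coincide.

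I do not expect a genuine obstacle: the content is purely a reindexing of a double sum by the fibre decomposition of $\pi$, and the only things to watch are the directions of the maps and keeping each complex conjugate attached to the correct factor. The essential point is that the twist $\zeta$ (the intersection character) is placed precisely so that the factor $\overline{\zeta(a)}$ appearing in the definition of $\pi^{\zeta}_{*}$ matches the $\zeta$-factor produced by $\hat{\pi}_{\zeta}$ on the Fourier side; recognizing this matching is the whole point of the definitions and requires no computation beyond tracking it.
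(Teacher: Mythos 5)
Your proposal is correct and is essentially the paper's own proof: the paper likewise evaluates both composites at an arbitrary $\xi \in \hat{B}$, expands $\mathcal{F}_A\varphi(\hat{\pi}_{\zeta}\xi)$ into $\sum_{a \in A} \overline{\xi(\pi(a))}\,\overline{\zeta(a)}\,\varphi(a)$, and regroups this sum over the fibres $\pi^{-1}(b)$ to recover $\mathcal{F}_B(\pi_*^{\zeta}\varphi)(\xi)$. Your observation that the twist $\zeta$ in $\pi^{\zeta}_{*}$ is placed exactly to absorb the $\zeta$-factor from $\hat{\pi}_{\zeta}$ matches the computation precisely, with no gap.
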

\begin{proof}
For $\forall \varphi \in \mathbb{C}[A]$, $\forall \xi \in \hat{B}$,\;
$\displaystyle
\hat{\pi}_{\zeta}^*(\mathcal{F}_A \varphi)(\xi)
=\; \mathcal{F}_A\varphi(\hat{\pi}_{\zeta}\xi)
=\; \sum_{a \in A} \overline{\xi(\pi(a))} \overline{\zeta(a)} \varphi(a)
=\; \sum_{b \in B} \sum_{a \in \pi^{-1}(b)} \overline{\xi(b)} \overline{\zeta(a)} \varphi(a) \\
=\; \sum_{b \in B} \overline{\xi(b)} \pi_*^{\zeta}\varphi(b)
=\; \mathcal{F}_B(\pi_*^{\zeta}\varphi)(\xi)
$.
\end{proof}

Next we intend to restrict the diagram (\ref{d1}) to the group-invariant functions.
Notice that following three conditions are equivalent:
$$
(\natural)\; \begin{cases} \vspace{0.1in}
(\natural 1) & \varphi \in \mathbb{C}[_G\!\backslash A] \; \Rightarrow \; \pi^{\zeta}_* \varphi \in \mathbb{C}[_H\!\backslash B].  \\ \vspace{0.1in}
(\natural 2) & \psi \in \mathbb{C}[_G\!\backslash \hat{A}] \; \Rightarrow \; \hat{\pi}_{\zeta}^* \psi \in \mathbb{C}[_H\!\backslash \hat{B}].\\
(\natural 3) & \text{ If $\xi,\; \xi' \in \hat{B}$ are in a same $H$-orbit, then $\hat{\pi}_{\zeta}\xi$ and $\hat{\pi}_{\zeta}\xi'$ are in a same $G$-orbit.}
\end{cases}
$$
In fact, the commutative diagram (\ref{d1}) implies the equivalence of $(\natural 1)$ and $(\natural 2)$. And the equivalence of $(\natural 2)$ and $(\natural 3)$ is easily verified.
When this equivalent condition $(\natural)$ holds, we can re-write diagram (\ref{d1}) as follows:

\begin{equation}
 \label{d2}
\begin{CD}
\mathbb{C}[_G\!\backslash A] @>{\mathcal{F}_A}>>\mathbb{C}[_G\!\backslash \hat{A}] \\
@V{\pi^{\zeta}_{*}}V{\qquad\quad\circlearrowright}V  @VV{\hat{\pi}_{\zeta}^*}V \\
\mathbb{C}[_H\!\backslash B] @>>{\mathcal{F}_B}> \mathbb{C}[_H\!\backslash \hat{B}]
\end{CD}
\end{equation}
We give a sufficient condition for $(\natural)$ as a reference.

\begin{pp}
 \label{flat1}
\; Assume that there exists a group homomorphism $\iota: H \rightarrow G $, and following two conditions hold:
$$
\begin{cases} \vspace{0.1in}
(\flat 1)& \forall h \in H,\; \tau(h) \circ \pi=\; \pi \circ \rho(\iota(h)).\\
(\flat 2)& \forall h \in H,\; \exists  g \in G\; \text{such that}\quad
\pi \circ \rho(g)=\; \pi\quad \text{and}\quad \zeta \circ \rho(g) =\; \zeta \circ \rho(\iota(h)).
\end{cases}
$$
Then the condition $(\natural)$ holds.
\end{pp}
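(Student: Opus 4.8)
The plan is to verify the version $(\natural 1)$ of the condition, which suffices since the excerpt has already established the equivalence $(\natural 1) \Leftrightarrow (\natural 2) \Leftrightarrow (\natural 3)$. So I take an arbitrary $G$-invariant $\varphi \in \mathbb{C}[_G\!\backslash A]$ and show directly that $\pi^{\zeta}_{*}\varphi$ is $H$-invariant, i.e. that $\pi^{\zeta}_{*}\varphi(\tau(h)b) = \pi^{\zeta}_{*}\varphi(b)$ for every $h \in H$ and $b \in B$. Everything reduces to two changes of variable in the defining sum $\pi^{\zeta}_{*}\varphi(b) = \sum_{a \in \pi^{-1}(b)} \overline{\zeta(a)}\,\varphi(a)$, one governed by each hypothesis.

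First I would use $(\flat 1)$ at the level of the fibers of $\pi$. Since $\tau(h)\circ\pi = \pi\circ\rho(\iota(h))$ and $\rho(\iota(h))$ is a linear automorphism, the map $a \mapsto \rho(\iota(h))a$ restricts to a bijection $\pi^{-1}(b) \to \pi^{-1}(\tau(h)b)$, its inverse being $\rho(\iota(h)^{-1})$ because $\iota$ is a homomorphism. Reindexing the sum for $\pi^{\zeta}_{*}\varphi(\tau(h)b)$ by this bijection and invoking the $G$-invariance $\varphi(\rho(\iota(h))a) = \varphi(a)$ converts the weight into $\overline{\zeta(\rho(\iota(h))a)} = \overline{(\zeta\circ\rho(\iota(h)))(a)}$, so that
$$
\pi^{\zeta}_{*}\varphi(\tau(h)b) = \sum_{a\in\pi^{-1}(b)} \overline{(\zeta\circ\rho(\iota(h)))(a)}\,\varphi(a).
$$
The only remaining discrepancy from $\pi^{\zeta}_{*}\varphi(b)$ is that the intersection character $\zeta$ has been replaced by $\zeta\circ\rho(\iota(h))$, and this is exactly what $(\flat 2)$ is designed to repair. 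Choosing $g\in G$ with $\pi\circ\rho(g)=\pi$ and $\zeta\circ\rho(g)=\zeta\circ\rho(\iota(h))$, I substitute $\zeta\circ\rho(\iota(h)) = \zeta\circ\rho(g)$ and then reindex once more: because $\pi\circ\rho(g)=\pi$, the automorphism $\rho(g)$ permutes each fiber $\pi^{-1}(b)$, so replacing $a$ by $\rho(g)^{-1}a$, together with $G$-invariance of $\varphi$, restores $\sum_{a\in\pi^{-1}(b)}\overline{\zeta(a)}\,\varphi(a) = \pi^{\zeta}_{*}\varphi(b)$, which closes the computation.

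The two change-of-variable steps are routine; the part that requires care is keeping straight the complementary roles of the two hypotheses. Condition $(\flat 1)$ acts at the base-point level, intertwining the $H$-action on $B$ with the $G$-action on $A$ precisely enough to move a whole fiber $\pi^{-1}(b)$ onto $\pi^{-1}(\tau(h)b)$; condition $(\flat 2)$ acts inside a single fiber, absorbing the twist by $\zeta$ that $(\flat 1)$ alone cannot control. The main point to get right is that the element $g$ supplied by $(\flat 2)$ genuinely stabilizes the fiber $\pi^{-1}(b)$ (so the second reindexing is legitimate) and that both reindexings apply $G$-invariance of $\varphi$ in the correct direction; no property of $\zeta$ beyond the two displayed identities is used.
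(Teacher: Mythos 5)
Your proof is correct, but it verifies a different one of the three equivalent conditions than the paper does. You establish $(\natural 1)$ directly on the function side: $(\flat 1)$ (applied to both $h$ and $h^{-1}$) gives the bijection $\rho(\iota(h)):\pi^{-1}(b)\to\pi^{-1}(\tau(h)b)$ used in your first reindexing, and $(\flat 2)$ supplies the element $g$ that absorbs the resulting twist $\zeta\circ\rho(\iota(h))$; your observation that $\rho(g)$ permutes each fiber (since $\pi\circ\rho(g)=\pi$ forces $\pi\circ\rho(g^{-1})=\pi$ as well) is exactly what legitimizes the second substitution, and both uses of $G$-invariance point the right way. The paper instead proves the character-side condition $(\natural 3)$ in one line: for $\xi'=\hat{\tau}(h)\xi$ and $g$ as in $(\flat 2)$, it computes $\hat{\rho}(\iota(h)g^{-1})\,\hat{\pi}_{\zeta}\xi=\bigl(\xi\circ\pi\circ\rho(g)\circ\rho(\iota(h^{-1}))\bigr)\cdot\bigl(\zeta\circ\rho(g)\circ\rho(\iota(h^{-1}))\bigr)=\bigl(\xi\circ\tau(h^{-1})\circ\pi\bigr)\cdot\zeta=\hat{\pi}_{\zeta}\xi'$. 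The two arguments are essentially dual and deploy the hypotheses in the same complementary way, but they buy slightly different things: the paper's computation produces an explicit group element $\iota(h)g^{-1}$ witnessing that $\hat{\pi}_{\zeta}$ preserves orbits, which is the geometric form of the condition reused later (e.g.\ as $(\natural 3')$ in sec.~2.3), whereas your route never passes through characters and makes the division of labor transparent --- $(\flat 1)$ moves whole fibers, $(\flat 2)$ repairs the $\zeta$-twist inside a single fiber; either way, one concludes via the already-established equivalence of $(\natural 1)$--$(\natural 3)$. One cosmetic slip: in sec.~2.1 the setting is finite Abelian groups with actions by group isomorphisms, with no vector-space structure assumed, so $\rho(\iota(h))$ should be called a group automorphism rather than a ``linear automorphism''; since your argument uses only bijectivity and the homomorphism property of $\iota$, nothing breaks.
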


\begin{proof}
Let's prove $(\natural 3)$.
Let $\xi, \xi' \in \hat{B}$ and $\xi'= \hat{\tau}(h)\xi,\; \exists h \in H$.
Take $g \in G$ for this $h$ in the assumption $(\flat 2)$.
Then,
$\displaystyle
\hat{\rho}(\iota(h) g^{-1})\; \hat{\pi}_{\zeta}\xi
=\;\left(\xi \circ \pi \circ \rho(g) \circ \rho(\iota(h^{-1}))\right)\cdot \left( \zeta \circ \rho(g) \circ \rho(\iota(h^{-1})) \right)
=\; \left( \xi \circ \tau(h^{-1}) \circ \pi \right)\cdot \zeta
=\; \hat{\pi}_{\zeta}\xi'.
$
\end{proof}


\subsection{Relations between canonical matrix elements}

In this subsection we assume the condition $(\natural)$.
So we have the commutative diagram (\ref{d2}).
By $(\natural 3)$, we can regard 
\begin{equation}
 \label{hatpi2}
\hat{\pi}_{\zeta}: \;_H\!\backslash \hat{B} \rightarrow \;_G\!\backslash \hat{A}
\end{equation}
as a map between orbits.
Let $\tilde{\mathcal{R}}$ denote $\hat{\pi}_{\zeta}(\mathcal{R})$ for $\mathcal{R} \in \,_H\!\backslash \hat{B}$.
Now we intend to see diagram (\ref{d2}) under canonical bases of each spaces.
$\mathcal{F}_A$ and $\mathcal{F}_B$ are expressed by canonical matrices $\varPhi_A=(\varPhi_A(\mathcal{P}, \mathcal{O}))$ and $\varPhi_B=(\varPhi_B(\mathcal{R}, \mathcal{Q}))$ defined in sec.\ref{1-2}.
Let $E^{\zeta}=(E^{\zeta}(\mathcal{Q}, \mathcal{O}))$ be the matrix of $\pi_*^{\zeta}$ on canonical bases $\{ \chi_{\mathcal{O}} \mid \mathcal{O} \in \,_G\!\backslash A \}$ and $\{ \chi_{\mathcal{Q}} \mid \mathcal{Q} \in \,_H\!\backslash B \}$.
And let $\Delta_{\zeta}=(\Delta_{\zeta}(\mathcal{R}, \mathcal{P}))$ be the matrix of $\hat{\pi}_{\zeta}^*$ on $\{ \chi_{\mathcal{P}} \mid \mathcal{P} \in \,_G\!\backslash \hat{A} \}$ and $\{ \chi_{\mathcal{R}} \mid \mathcal{R} \in \,_H\!\backslash \hat{B} \}$.
Then by definition,
\begin{equation}
 \label{Delta}
\Delta_{\zeta}(\mathcal{R}, \mathcal{P})=\; \hat{\pi}^*_{\zeta} \chi_{\mathcal{P}}(\mathcal{R})=\; \chi_{\mathcal{P}}(\tilde{\mathcal{R}})=\; \delta_{\tilde{\mathcal{R}}, \mathcal{P}}.
\end{equation}
In other words, $\Delta_{\zeta}$ is a matrix such that each row has just one element $1$ and others $0$
($\mathcal{R}$-th row has $1$ in its $\tilde{\mathcal{R}}$-th column only).
By diagram (\ref{d2}), $\Delta_{\zeta} \varPhi_A= \varPhi_B E^{\zeta}$ holds.
Let's compare $(\mathcal{R}, \mathcal{O})$-entries of both sides.
By (\ref{Delta}), the entry of left-hand side is
$\displaystyle
\sum_{\mathcal{P} \in \,_G\!\backslash\hat{A}} \delta_{\tilde{\mathcal{R}}, \mathcal{P}} \varPhi_A(\mathcal{P}, \mathcal{O})=\; \varPhi_A(\tilde{\mathcal{R}}, \mathcal{O}).
$
Thus the following theorem is proved:

\begin{thrm}
 \label{main1}
\; For $\mathcal{O} \in \,_G\! \backslash A$ and $\mathcal{R} \in \,_H\! \backslash \hat{B}$,
$$
\varPhi_{A}(\tilde{\mathcal{R}},\mathcal{O})\;=\; \sum_{\mathcal{Q} \in \,_H\! \backslash B}\, E^{\zeta}(\mathcal{Q},\mathcal{O}) \,
\varPhi_{B}(\mathcal{R},\mathcal{Q}).
$$
\end{thrm}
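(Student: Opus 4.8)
The plan is to turn the commutative square (\ref{d2}) into a single scalar identity by feeding a canonical basis vector through the operator identity $\hat{\pi}_{\zeta}^* \circ \mathcal{F}_A = \mathcal{F}_B \circ \pi^{\zeta}_{*}$ and then evaluating the two resulting $H$-invariant functions on a fixed orbit. Concretely, I would fix $\mathcal{O} \in {}_G\!\backslash A$ and $\mathcal{R} \in {}_H\!\backslash \hat{B}$, apply both sides of the operator identity to the characteristic function $\chi_{\mathcal{O}}$, and compare the values of the two functions in $\mathbb{C}[{}_H\!\backslash \hat{B}]$ at $\mathcal{R}$.

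For the left-hand side I would unwind the definition $\hat{\pi}_{\zeta}^* \psi = \psi \circ \hat{\pi}_{\zeta}$: choosing any representative $\xi \in \mathcal{R}$, condition $(\natural 3)$ guarantees that $\hat{\pi}_{\zeta}(\xi)$ lies in a well-defined $G$-orbit $\tilde{\mathcal{R}} = \hat{\pi}_{\zeta}(\mathcal{R})$ independent of the chosen representative, so $\hat{\pi}_{\zeta}^*(\mathcal{F}_A \chi_{\mathcal{O}})(\mathcal{R}) = (\mathcal{F}_A \chi_{\mathcal{O}})(\tilde{\mathcal{R}})$, which is exactly $\varPhi_A(\tilde{\mathcal{R}}, \mathcal{O})$ by the defining formula (\ref{me}) of the canonical matrix. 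For the right-hand side I would first expand $\pi^{\zeta}_{*}\chi_{\mathcal{O}}$ in the canonical basis of $\mathbb{C}[{}_H\!\backslash B]$: by the definition of the matrix $E^{\zeta}$ we have $\pi^{\zeta}_{*}\chi_{\mathcal{O}} = \sum_{\mathcal{Q}} E^{\zeta}(\mathcal{Q}, \mathcal{O}) \chi_{\mathcal{Q}}$. Applying $\mathcal{F}_B$ by linearity and evaluating at $\mathcal{R}$ turns each term $\mathcal{F}_B \chi_{\mathcal{Q}}(\mathcal{R})$ into $\varPhi_B(\mathcal{R}, \mathcal{Q})$, once more by (\ref{me}), so the right-hand side equals $\sum_{\mathcal{Q}} E^{\zeta}(\mathcal{Q}, \mathcal{O}) \varPhi_B(\mathcal{R}, \mathcal{Q})$. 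Equating the two evaluations gives the claimed identity.

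An equivalent route, which I would at least mention, is to read off the matrix identity $\Delta_{\zeta} \varPhi_A = \varPhi_B E^{\zeta}$ directly from the commutative diagram (\ref{d2}) and compare the $(\mathcal{R}, \mathcal{O})$-entry, using $\Delta_{\zeta}(\mathcal{R}, \mathcal{P}) = \delta_{\tilde{\mathcal{R}}, \mathcal{P}}$ from (\ref{Delta}) to collapse the sum over $\mathcal{P}$. Either phrasing reduces the theorem to bookkeeping.

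The computation itself is routine; the one genuinely substantive point is the well-definedness underlying the left-hand side, namely that $\hat{\pi}_{\zeta}$ descends to a map of orbits so that $\tilde{\mathcal{R}}$, and hence $\varPhi_A(\tilde{\mathcal{R}}, \mathcal{O})$, even makes sense. This is precisely where the standing hypothesis $(\natural)$ (via $(\natural 3)$) is indispensable, so I would invoke it explicitly rather than let it slip by. The other place a careless argument could go wrong is the index convention: the rows of $\varPhi_A$ and $\varPhi_B$ are indexed by orbits of the dual groups and the columns by orbits of the groups themselves, so I would pin down these conventions before writing any matrix product to be sure the composition matches $\Delta_{\zeta}\varPhi_A = \varPhi_B E^{\zeta}$ in the correct order.
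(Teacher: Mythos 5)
Your proposal is correct and follows essentially the same route as the paper: the paper likewise reads the identity $\Delta_{\zeta}\varPhi_A=\varPhi_B E^{\zeta}$ off the commutative diagram (\ref{d2}) and collapses the sum over $\mathcal{P}$ using $\Delta_{\zeta}(\mathcal{R},\mathcal{P})=\delta_{\tilde{\mathcal{R}},\mathcal{P}}$ from (\ref{Delta}), which is exactly your ``equivalent route'' and is the basis-vector evaluation you describe in matrix form. Your explicit attention to $(\natural 3)$ for the well-definedness of $\tilde{\mathcal{R}}$ matches the paper's standing hypothesis in that subsection, so there is nothing to correct.
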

\noindent
Remember $\tilde{\mathcal{R}}=\hat{\pi}_{\zeta}(\mathcal{R})$.
So remark that this theorem gives the value of $\varPhi_A(\mathcal{P}, \mathcal{O})$ only when $\mathcal{P} \in \,_G\! \backslash \hat{A}$ is gotten as an image of $\hat{\pi}_{\zeta}$ in (\ref{hatpi2}).
By the way, let us give a remark for calculating $E^\zeta(\mathcal{Q}, \mathcal{O})$.
By definition,
$\displaystyle
E^\zeta(\mathcal{Q}, \mathcal{O})
= \pi_*^{\zeta}\chi_{\mathcal{O}}(\mathcal{Q})
= \sum_{a \in \pi^{-1}(b)}\chi_{\mathcal{O}}(a) \overline{\zeta(a)}
\quad(b \in \mathcal{Q}).
$
Thus when $\pi^{-1}(\mathcal{Q})=\phi$, $E^\zeta(\mathcal{Q}, \mathcal{O})=0$.
Otherwise for any element $a_0 \in \pi^{-1}(\mathcal{Q})$,
\begin{equation}
 \label{E1}
E^{\zeta}(\mathcal{Q},\mathcal{O})
=\; \overline{\zeta(a_0)} \sum_{a \in \ker(\pi)} \chi_{\mathcal{O}}(a_0+a) \overline{\zeta(a)}.
\end{equation}
It induces a little useful property of $E^\zeta$ as follows.
\begin{pp}
 \label{Erow}
\; If the intersection character $\zeta$ is non-trivial on $\ker(\pi)$, then for $\forall \mathcal{Q} \in \,_H\!\backslash B$,
$$
\sum_{\mathcal{O} \in \,_G\! \backslash A}\, E^\zeta(\mathcal{Q}, \mathcal{O})=0.
$$
\end{pp}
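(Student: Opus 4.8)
The plan is to start directly from the formula (\ref{E1}) for $E^\zeta(\mathcal{Q},\mathcal{O})$ and carry out the sum over orbits $\mathcal{O}$, reducing the whole quantity to a single character sum over $\ker(\pi)$. First I would dispose of the degenerate case: if $\pi^{-1}(\mathcal{Q})=\phi$, then $E^\zeta(\mathcal{Q},\mathcal{O})=0$ for every $\mathcal{O}$ by the remark preceding (\ref{E1}), so the asserted sum is trivially $0$. Hence I may assume $\pi^{-1}(\mathcal{Q})\neq\phi$ and fix an element $a_0 \in \pi^{-1}(\mathcal{Q})$ once and for all, so that (\ref{E1}) applies with this $a_0$.

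Next I would sum (\ref{E1}) over all $\mathcal{O}\in\,_G\!\backslash A$ and interchange the (finite) order of summation, writing
$$
\sum_{\mathcal{O} \in \,_G\!\backslash A} E^\zeta(\mathcal{Q},\mathcal{O})
=\; \overline{\zeta(a_0)} \sum_{a \in \ker(\pi)} \overline{\zeta(a)} \left( \sum_{\mathcal{O} \in \,_G\!\backslash A} \chi_{\mathcal{O}}(a_0+a) \right).
$$
The decisive observation is that the $G$-orbits partition $A$, so every element lies in exactly one orbit and $\sum_{\mathcal{O}} \chi_{\mathcal{O}}(x)=1$ for all $x\in A$; in particular the inner bracket equals $1$ for each fixed $a$. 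This collapses the expression to
$$
\sum_{\mathcal{O} \in \,_G\!\backslash A} E^\zeta(\mathcal{Q},\mathcal{O})
=\; \overline{\zeta(a_0)} \sum_{a \in \ker(\pi)} \overline{\zeta(a)}.
$$

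Finally I would invoke character orthogonality on the finite group $\ker(\pi)$: the restriction $\zeta|_{\ker(\pi)}$ is an additive character of $\ker(\pi)$, hence so is its conjugate $\overline{\zeta}|_{\ker(\pi)}$, and by hypothesis this restriction is non-trivial. The sum of a non-trivial character over the whole group vanishes, so $\sum_{a\in\ker(\pi)}\overline{\zeta(a)}=0$, which gives the claim since the prefactor $\overline{\zeta(a_0)}$ is a fixed unit. There is essentially no obstacle here; the only point requiring care is the interchange-and-collapse step, where one must recognize that summing the orbit characteristic functions yields the constant function $1$ and that this is exactly what strips away the dependence on $\mathcal{O}$ and leaves a bare group-character sum to which orthogonality applies.
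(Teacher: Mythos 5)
Your proposal is correct and is essentially the paper's own proof: after disposing of the case $\pi^{-1}(\mathcal{Q})=\phi$, you sum (\ref{E1}) over orbits, use $\sum_{\mathcal{O}}\chi_{\mathcal{O}}=1$ to collapse the sum, and conclude by orthogonality of the non-trivial character $\zeta|_{\ker(\pi)}$ --- exactly the computation the paper performs, with the interchange-and-collapse step made explicit rather than left implicit.
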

\begin{proof}
We can assume $\pi^{-1}(\mathcal{Q})\neq \phi$.
Let $a_0 \in \pi^{-1}(\mathcal{Q})$.
Then by (\ref{E1}),
$\displaystyle
\sum_{\mathcal{O} \in \,_G\! \backslash A}\,E^{\zeta}(\mathcal{Q},\mathcal{O})
=\; \overline{\zeta(a_0)} \sum_{a \in \ker(\pi)} \overline{\zeta(a)}=\;0.
$
\end{proof}

Next let's use the diagram (\ref{d2}) with inverse Fourier transformations $\bar{\mathcal{F}}_A$ and $\bar{\mathcal{F}}_B$.
Let $\bar{\varPhi}_A=(\bar{\varPhi}_A(\mathcal{O}, \mathcal{P}))$ and $\bar{\varPhi}_B=(\bar{\varPhi}_B(\mathcal{Q}, \mathcal{R}))$ be canonical matrices of $|A| \bar{\mathcal{F}}_A$ and $|B| \bar{\mathcal{F}}_B$ respectively.
Then by the diagram (\ref{d2}), $\displaystyle \frac{1}{|A|} E^\zeta \bar{\varPhi}_A=\frac{1}{|B|}\bar{\varPhi}_B \Delta_\zeta$ holds.
Comparing $(\mathcal{Q}, \mathcal{P})$-entries of both sides, we get
\begin{equation}
\sum_{\mathcal{O} \in \,_G\!\backslash A} E^\zeta(\mathcal{Q}, \mathcal{O}) \bar{\varPhi}_A(\mathcal{O}, \mathcal{P})
=\; \frac{|A|}{|B|} \sum_{\mathcal{R}} \bar{\varPhi}_B(\mathcal{Q}, \mathcal{R}),
\end{equation}
where the sum in the right-hand side is over $\mathcal{R} \in \,_H\!\backslash \hat{B}$ such that $\tilde{\mathcal{R}}=\mathcal{P}$.
As special cases,
\begin{thrm}
 \label{main2}
\; For $\mathcal{P} \in \,_G\!\backslash \hat{A}$ and $\mathcal{Q} \in \,_H\!\backslash B$,\\
\rm{(1)} \quad if there are no $\mathcal{R} \in \,_H\!\backslash \hat{B}$ such that  $\mathcal{P}=\tilde{\mathcal{R}}$,
$$
\sum_{\mathcal{O} \in \,_G\!\backslash A} E^\zeta(\mathcal{Q}, \mathcal{O}) \bar{\varPhi}_A(\mathcal{O}, \mathcal{P})
=\;0.
$$
\rm{(2)} \quad  If there is just one $\mathcal{R} \in \,_H\!\backslash \hat{B}$ such that  $\mathcal{P}=\tilde{\mathcal{R}}$,
$$
\sum_{\mathcal{O} \in \,_G\!\backslash A} E^\zeta(\mathcal{Q}, \mathcal{O}) \bar{\varPhi}_A(\mathcal{O}, \mathcal{P})
=\; \frac{|A|}{|B|}\bar{\varPhi}_B(\mathcal{Q}, \mathcal{R}).
$$
\end{thrm}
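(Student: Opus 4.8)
The plan is to run the argument behind Theorem \ref{main1} in reverse, replacing the relation $\hat{\pi}_{\zeta}^* \circ \mathcal{F}_A = \mathcal{F}_B \circ \pi^{\zeta}_{*}$ coming from the commutative diagram (\ref{d2}) by its counterpart for the inverse transformations. Since $\mathcal{F}_A$ and $\mathcal{F}_B$ are isomorphisms with inverses $\bar{\mathcal{F}}_A$ and $\bar{\mathcal{F}}_B$, composing that relation with $\bar{\mathcal{F}}_B$ on the left and with $\bar{\mathcal{F}}_A$ on the right yields $\pi^{\zeta}_{*} \circ \bar{\mathcal{F}}_A = \bar{\mathcal{F}}_B \circ \hat{\pi}_{\zeta}^*$. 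This is exactly the identity the text preceding the theorem records; everything after it is bookkeeping with the chosen normalizations.

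Next I would pass to matrices on the canonical bases. The operator $\pi^{\zeta}_{*}$ has matrix $E^{\zeta}$ and $\hat{\pi}_{\zeta}^*$ has matrix $\Delta_{\zeta}$, while $\bar{\mathcal{F}}_A$ and $\bar{\mathcal{F}}_B$ have matrices $\frac{1}{|A|}\bar{\varPhi}_A$ and $\frac{1}{|B|}\bar{\varPhi}_B$ respectively, because $\bar{\varPhi}_A$ and $\bar{\varPhi}_B$ are by definition the matrices of $|A|\bar{\mathcal{F}}_A$ and $|B|\bar{\mathcal{F}}_B$. Reading the composed relation as a product of matrices in the same order gives $\frac{1}{|A|} E^{\zeta}\bar{\varPhi}_A = \frac{1}{|B|}\bar{\varPhi}_B \Delta_{\zeta}$, which is precisely the matrix identity asserted just above the theorem. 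The one point demanding care here is the normalization: one must remember that $\bar{\varPhi}_A$ and $\bar{\varPhi}_B$ carry the extra factors $|A|$ and $|B|$, and check that the index sets line up ($E^{\zeta}$ is indexed by $({}_H\!\backslash B)\times({}_G\!\backslash A)$, $\bar{\varPhi}_A$ by $({}_G\!\backslash A)\times({}_G\!\backslash \hat{A})$, and so on) so that both products are indeed indexed by $(\mathcal{Q},\mathcal{P})$.

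Finally I would read off the $(\mathcal{Q},\mathcal{P})$-entry of each side. On the left this is $\frac{1}{|A|}\sum_{\mathcal{O}} E^{\zeta}(\mathcal{Q},\mathcal{O})\bar{\varPhi}_A(\mathcal{O},\mathcal{P})$, and on the right, using $\Delta_{\zeta}(\mathcal{R},\mathcal{P})=\delta_{\tilde{\mathcal{R}},\mathcal{P}}$ from (\ref{Delta}), it is $\frac{1}{|B|}\sum_{\mathcal{R}:\,\tilde{\mathcal{R}}=\mathcal{P}}\bar{\varPhi}_B(\mathcal{Q},\mathcal{R})$. Clearing the factor $\frac{1}{|A|}$ gives the general formula stated before the theorem, and the two assertions then follow immediately by inspecting the right-hand sum: in case (1) the index set $\{\mathcal{R}:\tilde{\mathcal{R}}=\mathcal{P}\}$ is empty, so the sum vanishes; in case (2) it is a singleton, leaving the single term $\frac{|A|}{|B|}\bar{\varPhi}_B(\mathcal{Q},\mathcal{R})$. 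There is no genuine analytic obstacle in the argument—all the content sits in the commutativity of (\ref{d2}) and in the sparse shape of $\Delta_{\zeta}$—so the only thing to watch is that the normalization factors and the order of composition are kept consistent throughout.
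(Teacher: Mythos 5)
Your proposal is correct and takes essentially the same route as the paper: it derives the inverse-transform relation $\pi^{\zeta}_{*}\circ\bar{\mathcal{F}}_A=\bar{\mathcal{F}}_B\circ\hat{\pi}_{\zeta}^{*}$ from the commutativity of (\ref{d2}), converts it to the matrix identity $\frac{1}{|A|}E^{\zeta}\bar{\varPhi}_A=\frac{1}{|B|}\bar{\varPhi}_B\Delta_{\zeta}$ with the same normalization conventions, and compares $(\mathcal{Q},\mathcal{P})$-entries using the sparse form $\Delta_{\zeta}(\mathcal{R},\mathcal{P})=\delta_{\tilde{\mathcal{R}},\mathcal{P}}$ from (\ref{Delta}). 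The two cases then follow exactly as in the paper, by noting whether the fiber $\{\mathcal{R}\mid\tilde{\mathcal{R}}=\mathcal{P}\}$ is empty or a singleton.
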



\subsection{The case on a finite vector space}\label{2-4}

In this subsection, let's assume that $A$ and $B$ are vector spaces over a finite field $\mathbb{F}$, and $\pi: A \rightarrow B$ is linear.
Using a fixed non-trivial character $\theta \in \hat{\mathbb{F}}$ and
non-degenerate symmetric bilinear forms $\langle \; \mid \; \rangle_A$ and $\langle \; \mid \; \rangle_B$ on $A$ and $B$,
let's define correspondences $\Theta_A: A \rightarrow \hat{A},\, a \mapsto \theta_a$ and $\Theta_B: B \rightarrow \hat{B},\, b \mapsto \theta_b$ according to (\ref{taiou}) in sec.\ref{1-4}.
In this subsection, fix a element $e \in A$ and use $\theta_e$ as the intersection character (as $\zeta$ in previous subsections).
So let's write $\pi^e_*=\pi^{\theta_e}_*$ and $\hat{\pi}_e=\hat{\pi}_{\theta_e}$ respectively.
Let ${}^t\!\pi: B \rightarrow A$ be the adjoint map of $\pi$ according to the forms $\langle \; \mid \; \rangle_A$ and $\langle \; \mid \; \rangle_B$.
Then
\begin{equation}
 \label{hatpic}
\hat{\pi}_e(\theta_b)=\; \theta_{{}^t\!\pi(b)+e}\qquad (b \in B)
\end{equation}
holds.
In fact, for $a \in A$,
$
\hat{\pi}_e(\theta_b)a=
\theta_b(\pi(a))\cdot \theta_e(a)=
\theta \left(\langle b | \pi(a) \rangle_B + \langle e | a \rangle_A\right)
=\theta(\langle ^t\!\pi(b)+e\,|\,a \rangle_A)=
\theta_{{}^t\!\pi(b)+e}(a).
$
Assume that actions $\rho,\;\tau$ are linear and adjoint-free(Def.\ref{adjf}). 
So $\Theta_A$ and $\Theta_B$ preserve orbits, and
we can parametrize both $_G\!\backslash A$ and $_G\!\backslash \hat{A}$ by a set $\Lambda$, $_H\!\backslash B$ and $_H\!\backslash \hat{B}$ by a set $\Gamma$.
Then by (\ref{hatpic}), the condition $(\natural 3)$ in sec.\ref{2-1} can be re-written as follows:
$$
(\natural 3') \quad \text{ If $b,\; b' \in B$ are in a same $H$-orbit, then ${}^t\!\pi(b)+e$ and ${}^t\!\pi(b')+e$ are in a same $G$-orbit.}
$$
Let's assume the condition $(\natural)$ below.
Then we can regard 
\begin{equation}
 \label{hatpi-para}
\hat{\pi}_e: \Gamma \rightarrow \Lambda,\; \omega \mapsto \tilde{\omega}
\end{equation}
as a map between parameters corresponding to orbits.
That is, when $\omega \in \Gamma$ is a parameter of an orbit which includes an element $b$, then $\tilde{\omega}$ is the one which includes ${}^t\!\pi(b)+e$. 
And we can re-write Thm.\ref{main1} and Thm.\ref{main2} as folllows (for second one, also (\ref{barme-v}) is used).

\begin{thrm}
 \label{main3}
\; For $\lambda \in \Lambda$ and $\omega \in \Gamma$,
$$
\varPhi_{A}(\tilde{\omega},\lambda)\;=\; \sum_{\gamma \in \Gamma}\, E^{e}(\gamma, \lambda) \,
\varPhi_{B}(\omega, \gamma).
$$
\end{thrm}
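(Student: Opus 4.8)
The plan is to derive Theorem \ref{main3} as a direct transcription of Theorem \ref{main1} into the parametrized language of this subsection, with no computation beyond matching orbit labels. I would begin from the conclusion of Theorem \ref{main1} applied with $\zeta = \theta_e$,
$$\varPhi_A(\tilde{\mathcal{R}},\mathcal{O}) = \sum_{\mathcal{Q} \in {}_H\backslash B} E^{\theta_e}(\mathcal{Q},\mathcal{O})\,\varPhi_B(\mathcal{R},\mathcal{Q}),$$
and then replace each orbit by its parameter. Writing $\mathcal{O} = \mathcal{O}(\lambda)$ with $\lambda \in \Lambda$, letting $\mathcal{Q}$ range over the orbits of ${}_H\backslash B$ indexed by $\gamma \in \Gamma$, and taking $\mathcal{R}$ to be the orbit of ${}_H\backslash \hat{B}$ labeled by $\omega \in \Gamma$, the right-hand side becomes $\sum_{\gamma \in \Gamma} E^e(\gamma,\lambda)\,\varPhi_B(\omega,\gamma)$ verbatim, since $E^e(\gamma,\lambda)$ and $\varPhi_B(\omega,\gamma)$ are by definition $E^{\theta_e}(\mathcal{Q},\mathcal{O})$ and $\varPhi_B(\mathcal{R},\mathcal{Q})$ under this labeling.

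The only step that is not automatic is confirming that the left-hand side equals $\varPhi_A(\tilde{\omega},\lambda)$, i.e. that the orbit $\tilde{\mathcal{R}} = \hat{\pi}_{\theta_e}(\mathcal{R})$ of ${}_G\backslash \hat{A}$ carries the label $\tilde{\omega} \in \Lambda$. I would verify this by chasing a representative. Choose $b \in B$ in the $H$-orbit labeled $\omega$; since $\tau$ is adjoint-free, $\Theta_B$ preserves orbits, so $\theta_b = \Theta_B(b)$ lies in the orbit $\mathcal{R}$ of $\hat{B}$ labeled $\omega$. Formula (\ref{hatpic}) then gives $\hat{\pi}_{\theta_e}(\theta_b) = \theta_{{}^t\!\pi(b)+e}$, so $\tilde{\mathcal{R}}$ is the $G$-orbit of $\hat{A}$ containing $\theta_{{}^t\!\pi(b)+e}$. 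Because $\rho$ is adjoint-free and the labeling of $\hat{A}$ follows the convention $\mathcal{P}(\mu) = \Theta_A(\mathcal{O}(\mu))$ of sec.\ref{1-4}, this orbit inherits the label of the $G$-orbit of ${}^t\!\pi(b)+e$ in $A$, which is exactly $\tilde{\omega}$ by the definition (\ref{hatpi-para}) of the parameter map $\hat{\pi}_e : \Gamma \to \Lambda$. Hence $\varPhi_A(\tilde{\mathcal{R}},\mathcal{O}) = \varPhi_A(\tilde{\omega},\lambda)$ and the two identities coincide.

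I expect the main (and essentially only) obstacle to be purely notational: one must keep firmly in mind that the single index set $\Gamma$ simultaneously labels orbits on both $B$ and $\hat{B}$ (and likewise $\Lambda$ on $A$ and $\hat{A}$), with matching labels tied together through $\Theta_B$ and $\Theta_A$. In particular the symbol $\omega$ names the orbit $\mathcal{R}$ in $\hat{B}$ but is also used to select the representative $b$ in the orbit of $B$ with the same label, and the correctness of this double usage is precisely what the adjoint-free hypothesis guarantees. Once the representative-chasing above pins down $\tilde{\mathcal{R}} \leftrightarrow \tilde{\omega}$, the theorem is nothing more than a relabeling of Theorem \ref{main1}, with no genuine content remaining beyond the identification already recorded in (\ref{hatpi-para}).
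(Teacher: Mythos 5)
Your proposal is correct and follows exactly the paper's own route: the paper presents Theorem \ref{main3} as nothing more than a re-writing of Theorem \ref{main1} (with $\zeta=\theta_e$) under the parametrizations of sec.\ref{2-4}, which is precisely your relabeling argument. Your representative-chase via (\ref{hatpic}) identifying $\tilde{\mathcal{R}}$ with $\mathcal{P}(\tilde{\omega})$ just makes explicit the identification the paper records in (\ref{hatpi-para}) and leaves implicit in its one-line justification.
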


\begin{thrm}
 \label{main4}
\; For $ \mu \in \Lambda$ and $\gamma \in \Gamma$,\\
\rm{(1)} \quad if there are no $\omega \in \Gamma$ such that  $\mu=\tilde{\omega}$,
$$
\sum_{\lambda \in \Lambda} \overline{E^e(\gamma, \lambda)} \varPhi_A(\lambda, \mu)
=\; 0.
$$
\rm{(2)} \quad  If there is just one $\omega \in \Gamma$ such that  $\mu=\tilde{\omega}$,
$$
\sum_{\lambda \in \Lambda} \overline{E^e(\gamma, \lambda)} \varPhi_A(\lambda, \mu)
=\; \frac{|A|}{|B|}\varPhi_B(\gamma, \omega).
$$
\end{thrm}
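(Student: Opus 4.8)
The plan is to obtain Theorem \ref{main4} directly from the already-proven Theorem \ref{main2} by passing to the parametrized description of orbits available in this subsection and then taking complex conjugates; no new structural input is needed beyond the relation (\ref{barme-v}), which holds here precisely because the actions $\rho$ and $\tau$ are assumed adjoint-free.

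First I would translate Theorem \ref{main2} into the parameter notation of this subsection. Writing $_G\!\backslash A = \{\mathcal{O}(\lambda) \mid \lambda \in \Lambda\}$, $_G\!\backslash \hat{A} = \{\mathcal{P}(\mu) \mid \mu \in \Lambda\}$, $_H\!\backslash B = \{\mathcal{Q}(\gamma) \mid \gamma \in \Gamma\}$, and $_H\!\backslash \hat{B} = \{\mathcal{R}(\omega) \mid \omega \in \Gamma\}$, the orbit map $\hat{\pi}_{\zeta}: \,_H\!\backslash \hat{B} \to \,_G\!\backslash \hat{A}$ of (\ref{hatpi2}) descends to the parameter map $\hat{\pi}_e: \Gamma \to \Lambda$, $\omega \mapsto \tilde{\omega}$ of (\ref{hatpi-para}), so that $\widetilde{\mathcal{R}(\omega)} = \mathcal{P}(\tilde{\omega})$. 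Under this dictionary the case hypothesis of Theorem \ref{main2}, ``there is no (resp. exactly one) $\mathcal{R}$ with $\mathcal{P} = \tilde{\mathcal{R}}$'', becomes ``there is no (resp. exactly one) $\omega \in \Gamma$ with $\mu = \tilde{\omega}$'' for $\mathcal{P} = \mathcal{P}(\mu)$, and the matrix entry $E^{\zeta}(\mathcal{Q}, \mathcal{O})$ becomes $E^e(\gamma, \lambda)$.

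Next I would eliminate the inverse-transform matrices. By (\ref{barme-v}) we have $\bar{\varPhi}_A(\lambda, \mu) = \overline{\varPhi_A(\lambda, \mu)}$, and, applied verbatim to the pair $(B, H)$, $\bar{\varPhi}_B(\gamma, \omega) = \overline{\varPhi_B(\gamma, \omega)}$. Substituting these into the two identities of Theorem \ref{main2} turns them into
\[
\sum_{\lambda \in \Lambda} E^e(\gamma, \lambda)\, \overline{\varPhi_A(\lambda, \mu)} = 0
\]
in case (1) and
\[
\sum_{\lambda \in \Lambda} E^e(\gamma, \lambda)\, \overline{\varPhi_A(\lambda, \mu)} = \frac{|A|}{|B|}\, \overline{\varPhi_B(\gamma, \omega)}
\]
in case (2). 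Taking the complex conjugate of each equation, and using that the scalar $|A|/|B|$ is real, moves the overline off $\varPhi_A$ and $\varPhi_B$ and onto $E^e$, yielding exactly the two asserted formulas.

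There is no serious obstacle here: the statement is a bookkeeping reformulation of Theorem \ref{main2}. The only points that require care are keeping the two parameter sets $\Lambda$ and $\Gamma$ straight (and remembering which indexes the $A$-side versus the $B$-side orbits), confirming that $\hat{\pi}_{\zeta}$ on orbits really does descend to $\omega \mapsto \tilde{\omega}$ as in (\ref{hatpi-para}) so that the summation index and the case condition transcribe correctly, and verifying that (\ref{barme-v}) applies equally to the pair $(B, H)$---which it does, since $\tau$ is likewise adjoint-free and $B$ carries its own non-degenerate symmetric form $\langle \; \mid \; \rangle_B$.
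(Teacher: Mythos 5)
Your proposal is correct and takes essentially the same route as the paper: the paper obtains Theorem \ref{main4} precisely by transcribing Theorem \ref{main2} into the parameter notation of sec.\ref{2-4} and invoking (\ref{barme-v}) to replace $\bar{\varPhi}_A$ and $\bar{\varPhi}_B$ by complex conjugates (its stated justification is exactly ``for second one, also (\ref{barme-v}) is used''). The only detail you add---explicitly conjugating both sides to move the bar from $\varPhi_A$, $\varPhi_B$ onto $E^e$, and checking that (\ref{barme-v}) applies to the pair $(B,H)$ as well---is the step the paper leaves implicit, and you verify it correctly.
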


When we change bases, the commutative diagram (\ref{d2}) gives relations between the matrix elements of $\mathcal{F}_A$ and $\mathcal{F}_B$ on the changed bases also.
Then (\ref{bar-change-me}) would be useful.
See examples on $\S_n$, sec. 5.


\section{Recursions for ($q$)-Krawtchouk polynomials}\label{3}

In this section, we introduce two recursions concerning ($q$)-Krawtchouk polynomials which consistently occur in our examples.
Remark that \cite{Drec} investigates a particular case of this topic.

For any non-negative integer $N \geq 0$, let $f_N$ be a function defined on $\{0, 1, \dots, N\} \times \{0, 1, \dots, N\}$.
For convenience, we define $f_N(y,x)=0$ when $x \leq -1$ or $x \geq N+1$.
The first recursion for a family $\{f_N\}_{N \geq 0}$ is in a form such that
\begin{equation}
 \label{koutai}
f_N(y+1,x)=\; at^x f_{N-1}(y,x)-bt^{x-1}f_{N-1}(y,x-1)\qquad (N \geq 1,\;0 \leq y \leq N-1,\, 0 \leq x \leq N),
\end{equation}
where $a$, $b$ and $t$ are non-zero constants.
We call it `backward-shift Pascal recursion (BPR)'.
The second recursion is in a form such that
\begin{equation}
 \label{zensin}
f_N(y+1,x)-cf_N(y,x)=\; -dt^{2N-y-1}f_{N-1}(y,x-1)\qquad (N \geq 1,\;0 \leq y \leq N-1,\, 0 \leq x \leq N),
\end{equation}
where $c$, $d$ and $t$ are non-zero constants. 
We call it `forward-shift Pascal recursion (FPR)'.
Let $\sigma=f_0(0,0)$ and $O_N(x)=f_N(0,x)$.
Now we give the solution of these recursions.
First, If a family $\{f_N\}_{N \geq 0}$ satisfies FPR {\rm (\ref{zensin})}, then by induction we get
\begin{equation}
 \label{zensintoku}
f_N(y,x)=\; \sum_{i=0}^{y \wedge x} c^{y-i} (-d)^i t^{-\binom{i}{2}+2Ni-yi} \begin{bmatrix} y \\ i \end{bmatrix} _t O_{N-i}(x-i),
\end{equation}
where $\begin{bmatrix} N \\ x \end{bmatrix} _q$ is the Gaussian-polynomial defined in (\ref{gau}),
and we promise 
$
\begin{bmatrix} N \\ x \end{bmatrix} _1
$
means $\displaystyle \binom{N}{x}$.
If $\{f_N\}_{N \geq 0}$ satisfies both recursions,
we can determine $f_N$ uniquely as follows:

\begin{pp}
 \label{zenkasikitoku}
\; Assume a family $\{f_N\}_{N \geq 0}$ satisfies both BPR {\rm (\ref{koutai})} and FPR {\rm (\ref{zensin})}.

\;{\rm (1)}\;
When $t=1$ and $b = d$, then
\begin{eqnarray}\nonumber
 \label{onr1}
\begin{split}
&{\rm (i)}\quad  O_{N}(0)=\; \sigma \frac{a^N}{c^N},\quad O_N(x)=0\;(\text{for}\;x > 0),\text{and} \\
&{\rm (ii)}\quad  f_N(y,x)=\; \sigma \frac{a^{N-x}(-b)^x}{c^{N-y}}\binom{y}{x}\; (\text{for}\; x \leq y),\quad f_N(y,x)=\;0\; (\text{for}\; x > y).
\end{split}
\end{eqnarray}

\;{\rm (2)}\;
When $t=1$ and $b \neq d$, then
\begin{eqnarray}\nonumber
 \label{onr2}
\begin{split}
&{\rm (i)}\quad  O_{N}(x)=\; \sigma \frac{a^{N-x}}{c^N}  (d-b)^x\binom{N}{x},\; \text{and} \\
&{\rm (ii)}\quad  f_N(y,x)=\; c^y O_N(x)\, K_y(x; 1-\frac{b}{d}, N),
\end{split}
\end{eqnarray}

where $K_y(x; p,N)$ is the Krawtchouk polynomial defined in Def.\ref{kraw}.

\;{\rm (3)}\;
When $t \neq 1$, then
\begin{eqnarray}\nonumber
 \label{onr3}
\begin{split}
&{\rm (i)}\quad  O_{N}(x)=\; \sigma \frac{a^{N-x} (-b)^x}{c^N} t^{\binom{x}{2}} (\frac{d}{b}t^N; t^{-1})_x\begin{bmatrix} N \\ x \end{bmatrix} _t,\; \text{and} \\
&{\rm (ii)}\quad  f_N(y,x)=\; c^y O_N(x)\, K_y^{\Aff}(x; \frac{b}{d}t^{-N}, N; t),
\end{split}
\end{eqnarray}

where $K^{\Aff}_y(x; a,N; q)$ is the Affine $q$-Krawtchouk polynomial defined in Def.\ref{affqkraw}.

\end{pp}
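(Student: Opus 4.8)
The plan is to exploit the fact that the displayed formula (\ref{zensintoku}), which follows from FPR (\ref{zensin}) alone, already expresses every $f_N(y,x)$ in terms of the one-variable ``initial data'' $O_M(x)=f_M(0,x)$; hence it suffices to determine the functions $O_N$ and then substitute. To pin down $O_N$ I would use the information not yet consumed, namely BPR (\ref{koutai}). Evaluating both recursions at $y=0$ gives two expressions for $f_N(1,x)$: from (\ref{koutai}), $f_N(1,x)=at^xO_{N-1}(x)-bt^{x-1}O_{N-1}(x-1)$, and from (\ref{zensin}), $f_N(1,x)=c\,O_N(x)-dt^{2N-1}O_{N-1}(x-1)$. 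Eliminating $f_N(1,x)$ yields the single one-variable recursion
\[
c\,O_N(x)=at^xO_{N-1}(x)+\bigl(dt^{2N-1}-bt^{x-1}\bigr)\,O_{N-1}(x-1)\qquad(N\ge 1),
\]
which, together with $O_0(0)=\sigma$ and the support convention $O_N(x)=0$ for $x<0$ or $x>N$, determines every $O_N$ uniquely and reduces the proposition to solving this recursion in the three regimes.

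Solving it case by case produces part (i) of each item. When $t=1$ and $b=d$ the coefficient $dt^{2N-1}-bt^{x-1}=d-b$ vanishes, so $O_N(x)=(a/c)O_{N-1}(x)$ and a trivial telescoping gives $O_N(0)=\sigma(a/c)^N$ with $O_N(x)=0$ for $x>0$. When $t=1$ and $b\ne d$ the recursion is the Pascal-type relation $O_N(x)=\tfrac ac O_{N-1}(x)+\tfrac{d-b}{c}O_{N-1}(x-1)$, and induction on $N$ using $\binom{N-1}{x}+\binom{N-1}{x-1}=\binom{N}{x}$ yields the stated binomial formula. When $t\ne 1$ the same induction must absorb the $x$-dependent coefficient into a Gaussian binomial and the shifted factorial $(\tfrac db t^N;t^{-1})_x$; after cancelling common factors the inductive step reduces to
\[
(1-\tfrac db t^N)\begin{bmatrix} N \\ x \end{bmatrix} _t = t^{x}(1-\tfrac db t^{N-x})\begin{bmatrix} N-1 \\ x \end{bmatrix} _t + (1-\tfrac db t^{2N-x})\begin{bmatrix} N-1 \\ x-1 \end{bmatrix} _t,
\]
which follows from the $q$-Pascal rule $\begin{bmatrix} N \\ x \end{bmatrix} _t=\begin{bmatrix} N-1 \\ x-1 \end{bmatrix} _t+t^x\begin{bmatrix} N-1 \\ x \end{bmatrix} _t$ together with the ratio $(1-t^x)\begin{bmatrix} N-1 \\ x \end{bmatrix} _t=(1-t^{N-x})\begin{bmatrix} N-1 \\ x-1 \end{bmatrix} _t$ read off from (\ref{gau}).

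With $O_N$ in hand I would substitute into (\ref{zensintoku}) to obtain part (ii). In case (1) the factor $O_{N-i}(x-i)$ forces $i=x$, leaving a single surviving term (and none when $x>y$), which is exactly the claimed expression. In cases (2) and (3) the substitution produces a sum over $i$ of products of (Gaussian) binomials and shifted factorials; the task is then to rewrite the summand, via the conversions $\binom{y}{i}=\tfrac{(-1)^i(-y)_i}{i!}$ and $\binom{N-i}{x-i}/\binom{N}{x}=(-x)_i/(-N)_i$ and their $q$-analogues for $(a;q)_k$ and $\begin{bmatrix}\cdot\\ \cdot\end{bmatrix}_t$, so that it becomes the general term of the ${}_2F_1$ of Def.\ref{kraw} with $p=1-\tfrac bd$, respectively of the ${}_3\varphi_2$ of Def.\ref{affqkraw} with $a=\tfrac bd t^{-N}$ and base $q=t$. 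Factoring out $c^yO_N(x)$ then gives precisely $c^yO_N(x)\,K_y(x;1-\tfrac bd,N)$ and $c^yO_N(x)\,K_y^{\Aff}(x;\tfrac bd t^{-N},N;t)$.

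The derivation of the one-variable recursion and the two $t=1$ cases are straightforward; the real work, and the main obstacle, is the $q$-bookkeeping in case (3): verifying the closed form for $O_N$ through the $q$-Pascal induction above, and then matching the resulting terminating sum to the ${}_3\varphi_2$, where one must carefully collate the powers of $t$ of the shapes $t^{-\binom i2}$, $t^{2Ni}$, $t^{-yi}$ and $t^{\binom x2}$ coming from (\ref{zensintoku}) and from $O_{N-i}(x-i)$, and reconcile the descending $q$-shifted factorial $(t^{N};t^{-1})_x$ appearing in (\ref{gau}) with the ascending factorials $(a;q)_k$ of Def.\ref{affqkraw}. Once the exponents are correctly assembled, each identification is a single application of the definitions.
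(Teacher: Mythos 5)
Your proposal is correct and follows essentially the same route as the paper: taking $y=0$ in BPR and FPR and eliminating $f_N(1,x)$ yields exactly the paper's one-variable recursion (\ref{o-zenka}), which is solved inductively to get each (i), and then substituted into (\ref{zensintoku}) to get each (ii). The only difference is that you spell out the details (the $q$-Pascal verification in case (3) and the conversions matching the sum to the ${}_2F_1$ and ${}_3\varphi_2$) that the paper compresses into ``using it inductively'' and ``by simple calculation,'' and your displayed Gaussian-binomial identity and factorial conversions check out.
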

\begin{proof}
In any cases, taking $y=0$ in BPR and FPR, we get
\begin{equation}
 \label{o-zenka}
cO_N(x)=\; at^{x}O_{N-1}(x) + (dt^{2N-1}-bt^{x-1}) O_{N-1}(x-1).
\end{equation}
Using it inductively, the values of $O_N(x)$ in each (i) is gotten.
Substituting it in (\ref{zensintoku}) and by simple calculation, we can get the values of $f_N(y,x)$ in each (ii).
\end{proof}


\section{Canonical matrices in some examples over a finite field}\label{four}
Throughout this section, $\mathbb{F}$ is a finite field of the order $q$, a power of a prime.
Here as concrete examples of previous sections, we deal with group-invariant Fourier transformations on $\mathbb{F}$-vector spaces $\mathbb{F}^n$, $\M_{n,m}(\mathbb{F})$ of $n \times m$-sized all matrices $(n\leq m)$ and $\A_n(\mathbb{F})$ of $n \times n$-sized alternating matrices with adequate group-actions.
In followings we omit writing coefficient field $\mathbb{F}$ like $\M_{n,m}=\M_{n,m}(\mathbb{F})$.
As a remark,  our all examples can be regarded as the sub-actions of $\G_n \times \G_m$ on $\M_{n,m}$ such that
\begin{equation}
\label{act2}
\rho(g,h) a = ga\,^t\!h\qquad \bigl( (g,h) \in \G_n \times \G_m,\; a \in \M_{n,m} \bigr),
\end{equation}
where and in the following contexts, $^t\!h$ denotes the transpose matrix of $h$.
One of our purposes here is to describe relations between two Fourier transformations on different sized vector spaces, like $\mathbb{F}^n$ and $\mathbb{F}^{n-1}$.
Then using this relations, we induce BPR (\ref{koutai}) and FPR (\ref{zensin}) of the canonical matrix elements, and solve them.

Fix a non-trivial additive character $\theta \in \hat{\mathbb{F}}$.

\subsection{The pair $(\mathbb{F}^n, (\mathbb{F}^\times)^n \rtimes \mathfrak{S}_n)$}
As a first example, we think of vector space $\mathbb{F}^n$ and an action $\rho$ of a wreath product $G_n=(\mathbb{F}^\times)^n \rtimes \mathfrak{S}_n$, that is, a semi-direct product group according to the permutation,
where $\mathbb{F}^\times$ is the multicative group of $\mathbb{F}$ and $\mathfrak{S}_n$ is symmetric group of degree $n$.
The action $\rho$ on $\mathbb{F}^n$ by $G_n$ is defined as follows:
\begin{equation}
\rho(c,\sigma)a=\; (c_1 a_{\sigma^{-1}(1)},\dots,c_n a_{\sigma^{-1}(n)}) \quad \quad \left(c=(c_1,\dots,c_n) \in (\mathbb{F}^\times)^n,
\;\sigma \in \mathfrak{S}_n,\; a=(a_1,\dots,a_n) \in \mathbb{F}^n\right).
\end{equation}
We consider $G_n$-invariant Fourier transformation $\mathcal{F}_n:\mathbb{C}[\,_{G_n}\! \backslash \mathbb{F}^n] \rightarrow \mathbb{C}[\,_{G_n}\! \backslash \hat{\mathbb{F}^n}]$.
Define the weight of an element $a=(a_1, \dots, a_n) \in \mathbb{F}^n$ by $\wt a = \sharp \{ i|\; a_i \neq 0 \}$.
Then $G_n$-orbits in $\mathbb{F}^n$ are characterized by it,
that is,
\begin{equation}
\,_{G_n}\! \backslash \mathbb{F}^n=\; \{\mathcal{O}(r) \mid 0 \leq r \leq n\}, \qquad \mathcal{O}(r) =\mathcal{O}_n(r) = \{a \in \mathbb{F}^n|\; \wt a = r \}.
\end{equation}
On the other hand, using $\theta$ (a non-trivial additive character of $\mathbb{F}$) and an ordinary non-degenerate symmetric bilinear form on $\mathbb{F}^n$ such as $\langle a|b \rangle=\, \sum_{j=1}^n a_j b_j\;(a,\!b \in \mathbb{F}^n)$, we correspond $\mathbb{F}^n$ and $\hat{\mathbb{F}^n}$ by the manner of (\ref{taiou}) in sec.\ref{1-4}.
That is, for $a \in \mathbb{F}^n$ define $\theta_a \in \hat{\mathbb{F}^n}$ by
$
\theta_a(b)=\, \theta(\langle a|b \rangle)\;\; (b \in \mathbb{F}^n).
$
Also check that the action is adjoint-free according to the form $\langle \;|\; \rangle$.
In fact we can verify that the adjoint map of $\rho(c,\sigma)$ is given by $\rho(\sigma^{-1}\cdot c, \sigma^{-1})\;(c \in (\mathbb{F}^\times)^n,\, \sigma \in \mathfrak{S}_n)$.
Therefore the orbits $_{G_n}\! \backslash \hat{\mathbb{F}^n}$ corresponds to $_{G_n}\! \backslash \mathbb{F}^n$ such that
\begin{equation}
_{G_n}\! \backslash \hat{\mathbb{F}^n}=\; \{\mathcal{P}(s) \mid 0 \leq s \leq n\},\qquad \mathcal{P}(s)= \mathcal{P}_n(s) = \{\theta_a \mid \wt a =s \}.
\end{equation}
Under this parametrization, the canonical matrix of $\mathcal{F}_n$ can be written as $\varPhi_n=(\varPhi_n(s,r))_{s,r=0}^n$.
Remark when $n=0$, we can regard $\mathbb{F}^0=0$ and $\varPhi_0(0,0)=1$.
And for example when $n=1$, we can easily know the values of $\varPhi_1$ in the following table:
\begin{table}[htb]
\begin{center}
\scalebox{1.1}[1.1]{
\begin{tabular}{|c||c|c|} \hline
${}_{s} \quad {}^{r}$ & 0 & 1 \\ \hline \hline
0 & {\footnotesize$1$} & {\footnotesize$q-1$} \\ \hline
1 & {\footnotesize$1$} & {\footnotesize$-1$} \\ \hline
\end{tabular}
}
\end{center}
\caption{The values $\varPhi_1(s,r)$.}
\end{table}

For an arbitrary $n$, the values of $\varPhi_n$ is given as follows: 

\begin{pp}
\label{diag-me}
\begin{equation}\nonumber
\varPhi_n(s,r)=\;(q-1)^r\, \binom{n}{r}\, K_s(r;\frac{q-1}{q},n)
\qquad (0 \leq s,\!r \leq n ).
\end{equation}
\end{pp}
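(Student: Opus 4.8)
The plan is to realize $\varPhi_n$ as the unique solution of the two Pascal-type recursions of Section \ref{3}, obtained by feeding the commutative diagram of Section 2 into the special functions of Section \ref{3}. Concretely, I would apply the machinery of Section \ref{2-4} to the projection $\pi\colon\bbf^n\to\bbf^{n-1}$ dropping the last coordinate, taking as intersection character $\zeta=\theta_e$ with $e=(0,\dots,0,1)$. First I would check the hypotheses: $\ker\pi$ is the last-coordinate line $\{(0,\dots,0,t)\mid t\in\bbf\}$, the adjoint $^t\!\pi$ is the inclusion $(b_1,\dots,b_{n-1})\mapsto(b_1,\dots,b_{n-1},0)$, and $^t\!\pi(b)+e=(b_1,\dots,b_{n-1},1)$ has weight $\wt(b)+1$. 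Hence condition $(\natural 3')$ of Section \ref{2-4} holds (one could also cite Prop.\ref{flat1}), and the induced map on parameters (\ref{hatpi-para}) is simply $\tilde\omega=\omega+1$, a bijection of $\{0,\dots,n-1\}$ onto $\{1,\dots,n\}$.

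The central computation is the matrix $E^{e}(\gamma,r)$ of the push-forward $\pi^{e}_{*}$. Using (\ref{E1}) with a weight-$\gamma$ representative $a_0=(b_1,\dots,b_{n-1},0)$ (so $\overline{\theta_e(a_0)}=1$) and running $a=(0,\dots,0,t)$ over $\ker\pi$, the weight of $a_0+a$ is $\gamma$ when $t=0$ and $\gamma+1$ when $t\neq 0$; the expression collapses to a character sum in $t$. Since $\theta$ is non-trivial, $\sum_{t\neq 0}\overline{\theta(t)}=-1$, so $E^{e}(\gamma,r)=\delta_{\gamma,r}-\delta_{\gamma,r-1}$, a bidiagonal $\pm 1$ matrix (consistent with Prop.\ref{Erow}, as $\theta_e$ is non-trivial on $\ker\pi$).

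With $E^e$ and $\tilde\omega=\omega+1$ determined, I would substitute into the two relations of Section \ref{2-4}. Theorem \ref{main3} gives $\varPhi_n(s+1,r)=\sum_{\gamma}E^{e}(\gamma,r)\varPhi_{n-1}(s,\gamma)=\varPhi_{n-1}(s,r)-\varPhi_{n-1}(s,r-1)$, which is exactly BPR (\ref{koutai}) with $a=b=t=1$. Theorem \ref{main4}, applied in case (2) for $\mu\ge 1$ (with unique preimage $\omega=\mu-1$) and case (1) for $\mu=0$, and using $|A|/|B|=q$, yields $\varPhi_n(\gamma+1,\mu)-\varPhi_n(\gamma,\mu)=-q\,\varPhi_{n-1}(\gamma,\mu-1)$, which is FPR (\ref{zensin}) with $c=1$, $d=q$, $t=1$. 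Thus, under the identification $f_N(y,x)=\varPhi_N(y,x)$, the family $\{\varPhi_n\}_{n\ge 0}$ satisfies both recursions for all $N\ge 1$ in the required ranges.

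It then remains to invoke Proposition \ref{zenkasikitoku}: since $t=1$ and $b=1\neq q=d$, we land in case (2), and with $\sigma=\varPhi_0(0,0)=1$ the formula yields $O_N(x)=(q-1)^x\binom{N}{x}$ and $f_N(y,x)=(q-1)^x\binom{N}{x}\,K_y(x;1-\tfrac{1}{q},N)$, i.e.\ the asserted closed form with parameter $p=\frac{q-1}{q}$. I expect the main obstacle to be bookkeeping rather than anything conceptual: one must fix the index conventions carefully (the character-side index $s$ plays the role of the shift variable $y$ and of the Krawtchouk degree, while the element-side weight $r$ is the polynomial variable $x$) and handle the boundary cases correctly (the preimage-free value $\mu=0$ handled by case (1) of Theorem \ref{main4}, together with the convention $f_{N-1}(y,-1)=0$). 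Once these are settled, matching the constants $(a,b,c,d,t)=(1,1,1,q,1)$ to Proposition \ref{zenkasikitoku}(2) is immediate.
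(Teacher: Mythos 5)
Your proposal is correct and takes essentially the same route as the paper's own proof: the same projection $\pi:\bbf^n\to\bbf^{n-1}$ with intersection character $\theta_e$, $e=(0,\dots,0,1)$, the same bidiagonal matrix $E$ as in (\ref{diagE}), the recursions (\ref{diag-koutai}) and (\ref{diag-zensin}) obtained from Theorems \ref{main3} and \ref{main4}, and the final appeal to Proposition \ref{zenkasikitoku}(2) with $(a,b,c,d,t)=(1,1,1,q,1)$ and $\sigma=1$. Your explicit handling of the boundary case $\mu=0$ via case (1) of Theorem \ref{main4} is exactly what the paper encodes in the convention $\varPhi_{n-1}(v,-1)=0$.
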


In order to calculate it, we use a relation given in sec.2, between two group-invariant Fourier transformations $\mathcal{F}_n$ and $\mathcal{F}_{n-1}\;(n \geq 1)$.
Let $\pi:\mathbb{F}^n \rightarrow \mathbb{F}^{n-1}$ be a projection map $(a_1,\dots, a_n) \mapsto (a_1, \dots.a_{n-1})$.
Then remark that the adjoint map ${}^t\!\pi$ is an embedding map $(a_1, \dots.a_{n-1}) \mapsto (a_1, \dots.a_{n-1},0)$.
Let's take $e=(0, \dots, 0,1) \in \mathbb{F}^n$ and use $\theta_e$ as an intersection character.
So, define $\pi_*:=\pi_*^e=\pi_*^{\theta_e}$ and $\hat{\pi}:=\hat{\pi}_e =\hat{\pi}_{\theta_e}$ according to (\ref{osidasi}) and (\ref{hatpi}).
Especially
\begin{equation}
 \label{diag-osidasi}
\pi_* \varphi (b)=\;\sum_{a \in \pi^{-1}(b)} \overline{\theta_e(a)} \varphi(a)
=\; \sum_{w \in \mathbb{F}} \overline{\theta(w)} \varphi(b,w)\qquad(\varphi \in \mathbb{C}[\mathbb{F}^n],\; b \in \mathbb{F}^{n-1}).
\end{equation}
Also since
${}^t\!\pi (b)+e= (b,1)\;(\forall b \in \mathbb{F}^{n-1})$,
then $(\natural 3')$ in sec.\ref{2-4} holds.
So we have a commutative diagram (\ref{d2}) as follows:
\begin{equation}
 \label{d3}
\begin{CD}
\mathbb{C}[_{G_n}\!\backslash \mathbb{F}^n] @>{\mathcal{F}_n}>>\mathbb{C}[_{G_n}\!\backslash \hat{\mathbb{F}}^n] \\
@V{\pi_{*}}V{\qquad\qquad\;\;\circlearrowright}V  @VV{\hat{\pi}^*}V \\
\mathbb{C}[_{G_{n-1}}\!\backslash \mathbb{F}^{n-1}] @>>{\mathcal{F}_{n-1}}> \mathbb{C}[_{G_{n-1}}\!\backslash \hat{\mathbb{F}}^{n-1}]
\end{CD}
\end{equation}
And when we regard $\hat{\pi}:\{0,1,\dots,n-1\} \to \{0,1,\dots,n\}, v \mapsto \tilde{v}$ as a map between parameters correponding to orbits by (\ref{hatpi-para}), then $\tilde{v}=v+1$.
It implies that the matrix of $\hat{\pi}^*$ on canonical bases is given by
\begin{equation}
 \label{diagDelta}
\Delta=(\delta_{v+1,s})_{0\leq v \leq n-1,\; 0 \leq s \leq n}
\end{equation}
by (\ref{Delta}).
Now let's solve the matrix $E=(E(u,r))_{u,r}$ of $\pi_*$ on canonical bases.
For $0\leq u \leq n-1$ and $0 \leq r \leq n$, using any element $b \in \mathbb{F}^{n-1}$ such that $\wt b=u$ and $\chi_r=\chi_{\mathcal{O}(r)}$, by(\ref{diag-osidasi}),
\begin{equation}
 \label{diagE}
E(u,r)=\;\pi_* \chi_r(b)
=\; \sum_{w \in \mathbb{F}}\overline{\theta(w)}\chi_r(b,w)
=\; \begin{cases} 1& (r=u)\\-1&(r=u+1)\\0&(otherwise).\end{cases}
\end{equation}
Now everything is ready for theorems in sec.2.
First by Thm.\ref{main3},
$\displaystyle
\varPhi_n(v+1, r)
=\; \sum_{u=0}^{n-1} E(u,r) \varPhi_{n-1}(v, u),$
so we get
\begin{equation}
 \label{diag-koutai}
\varPhi_n(v+1, r)=\; \varPhi_{n-1}(v,r)-\varPhi_{n-1}(v,r-1)\qquad (0 \leq v \leq n-1,\; 0\leq r \leq n),
\end{equation}
where we interpret $\varPhi_{n-1}(v,-1)=\varPhi_{n-1}(v,n)=0$ for convenience.
And by Th.\ref{main4},
$\displaystyle
\sum_{r=0}^n \overline{E(u,r)} \varPhi_n(r,s)
=\; \frac{|\mathbb{F}^{n}|}{|\mathbb{F}^{n-1}|}\varPhi_{n-1}(u, s-1)$,
so
\begin{equation}
 \label{diag-zensin}
\varPhi_{n}(u+1,s)-\varPhi_{n}(u,s)=\; -q \varPhi_{n-1}(u,s-1) \qquad (0 \leq u \leq n-1,\; 0\leq s \leq n).
\end{equation}
So when we regard $f_N(y,x)=\varPhi_N(y,x)$, these give BPR(\ref{koutai}) and FPR(\ref{zensin}) in sec.\ref{3} of the case $a=b=c=1,\; d=q,\; t=1$ and $\sigma=\varPhi_0(0,0)=1$.
Therefore by Prop.\ref{zenkasikitoku}(2) we get
\begin{eqnarray}
 \label{diag-kidou}
|\mathcal{O}_n(r)|=\; (q-1)^r\binom{n}{r},\quad \text{and} \\
\varPhi_n(s,r)=\; |\mathcal{O}_n(r)|\, K_s(r; \frac{q-1}{q}, n),
\end{eqnarray}
which proved Prop.\ref{diag-me}.

As remarks we notice some properties of a canonical matrix element $\varPhi_n(s,r)$ from recursions (\ref{diag-koutai}) and (\ref{diag-zensin}).

\begin{col}
\label{seisitu1}
{\rm (1)}\quad $\varPhi_n(s,r)$ is a polynomial in $q$ over integers.

{\rm (2)}\quad As $q \rightarrow 1$, if $r \leq s$ then
$\displaystyle \varPhi_n(s,r) \rightarrow (-1)^r \binom{s}{r}$,
otherwise $\displaystyle \varPhi_n(s,r) \rightarrow 0$.
\end{col}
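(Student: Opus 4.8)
The plan is to work entirely from the two recursions (\ref{diag-koutai}) and (\ref{diag-zensin}) together with the seed $\varPhi_n(0,r)=(q-1)^r\binom{n}{r}$ of (\ref{diag-kidou}), avoiding the closed form of Prop.\ref{diag-me} (whose Krawtchouk factor carries the denominator $q/(q-1)$ and so obscures integrality rather than revealing it).

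For (1) I would induct on $n$. For $n=0$ we have $\varPhi_0(0,0)=1$. Assuming every $\varPhi_{n-1}(y,x)$ is a polynomial in $q$ with integer coefficients (with the standing convention that out-of-range values are $0$, which are still such polynomials), the seed $\varPhi_n(0,r)=(q-1)^r\binom{n}{r}$ is plainly an integer polynomial in $q$, while for $s\ge 1$ the BPR (\ref{diag-koutai}) expresses $\varPhi_n(s,r)=\varPhi_{n-1}(s-1,r)-\varPhi_{n-1}(s-1,r-1)$ as a difference of two integer polynomials in $q$. Hence every $\varPhi_n(s,r)$ is an integer polynomial in $q$, closing the induction.

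For (2) the key point is that, by (1), $\lim_{q\to1}\varPhi_n(s,r)$ is simply the evaluation $\varPhi_n(s,r)\big|_{q=1}$; write $\phi_n(s,r)$ for this value. Both sides of (\ref{diag-koutai}) and (\ref{diag-zensin}) are now polynomials in $q$ that coincide at every prime power $q$, hence are equal as polynomials and in particular remain valid at $q=1$. Consequently the family $\{\phi_N\}$ satisfies the abstract recursions BPR (\ref{koutai}) and FPR (\ref{zensin}) with the specialized constants $a=b=c=1$, $d=q\big|_{q=1}=1$, $t=1$, and seed $\sigma=\phi_0(0,0)=1$. Since now $t=1$ and $b=d$, this is exactly the degenerate regime (1) of Prop.\ref{zenkasikitoku}, whose formula (ii) gives $\phi_n(s,r)=(-1)^r\binom{s}{r}$ for $r\le s$ and $\phi_n(s,r)=0$ for $r>s$, which is the asserted limit.

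The one step to treat with care — and the only real obstacle — is this passage to $q=1$: it genuinely relies on (1) to reinterpret the limit as an evaluation, and on the observation that the recursions, derived a priori only for prime-power $q$, are honest polynomial identities and therefore persist at $q=1$, where the parameters $b$ and $d$ collide and move us out of case (2) and into case (1) of Prop.\ref{zenkasikitoku}. Everything else is routine bookkeeping.
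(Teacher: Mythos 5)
Your proof is correct and takes essentially the same route as the paper: part (1) by induction on $n$ via the backward recursion (\ref{diag-koutai}) with the seed (\ref{diag-kidou}), and part (2) by noting that the $q \to 1$ limit satisfies BPR (\ref{koutai}) and FPR (\ref{zensin}) with $a=b=c=d=1$, $t=1$, $\sigma=1$ and invoking the degenerate case Prop.\ref{zenkasikitoku}(1). Your additional care in justifying the passage to $q=1$ (integrality turning the limit into an evaluation, and the recursions persisting as polynomial identities) simply makes explicit a step the paper leaves implicit.
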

\noindent
\underline{Remark}. \;(2) implies that the reversible formula (\ref{sin-hanten}) given by this example is a $q$-analogue of the reversible formula in Prop.\ref{mori}.
\begin{proof}
(1)\;  It is inductively known by the recursions (\ref{diag-koutai}) and (\ref{diag-kidou}). 

(2)\; Since the limit of $\varPhi_n(s,r)$ as $q \rightarrow 1$ satisfies
BPR(\ref{koutai}) and FPR(\ref{zensin}) of the case $a=b=c=d=1,\; t=1$ and $\sigma=1$ in sec.\ref{3}, then by Prop.\ref{zenkasikitoku}(1) we get the result.
\end{proof}

Also by use of diagram(\ref{d3}), we can derive the generating function of $\varPhi_n(s,r)$ concerning $r$.

\begin{pp}
 \label{diag-bo}
Let $t$ be a variable. We have
$$
\sum_{r=0}^n \varPhi_n(s,r) t^r=\;(1-t)^s (1+(q-1)t)^{n-s}\qquad (0 \leq s \leq n).
$$
\end{pp}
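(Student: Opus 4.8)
The plan is to treat the left-hand side as a generating polynomial and to drive the argument by the backward-shift recursion (\ref{diag-koutai}) already established for this example, rather than by summing the explicit hypergeometric expression of Prop.\ref{diag-me} directly. I write $F_n(s,t)=\sum_{r=0}^n \varPhi_n(s,r)\,t^r$, so the goal becomes $F_n(s,t)=(1-t)^s\bigl(1+(q-1)t\bigr)^{n-s}$ for all $0\le s\le n$. I would argue by induction on $n$, taking the degenerate case $n=0$ as the base, where $F_0(0,t)=\varPhi_0(0,0)=1$ agrees with the right-hand side.

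For the inductive step I would first dispose of the column $s=0$. Since $K_0(r;p,N)=1$ (the defining $_2F_1$ retains only its $k=0$ term), Prop.\ref{diag-me} together with (\ref{diag-kidou}) gives $\varPhi_n(0,r)=(q-1)^r\binom{n}{r}$, whence $F_n(0,t)=\sum_{r=0}^n \binom{n}{r}(q-1)^r t^r=\bigl(1+(q-1)t\bigr)^n$ by the binomial theorem, which is exactly the claim at $s=0$. Note that this step needs no recursion and so serves as an internal base at each level $n$.

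The heart of the argument is the range $1\le s\le n$. Writing $s=v+1$ with $0\le v\le n-1$, I would multiply the recursion (\ref{diag-koutai}), that is $\varPhi_n(v+1,r)=\varPhi_{n-1}(v,r)-\varPhi_{n-1}(v,r-1)$, by $t^r$ and sum over $0\le r\le n$. Invoking the boundary conventions $\varPhi_{n-1}(v,-1)=\varPhi_{n-1}(v,n)=0$, the first sum collapses to $F_{n-1}(v,t)$, and after reindexing $r\mapsto r-1$ the second collapses to $t\,F_{n-1}(v,t)$, yielding the clean reduction
\begin{equation}\nonumber
F_n(s,t)=(1-t)\,F_{n-1}(s-1,t).
\end{equation}
Applying the induction hypothesis $F_{n-1}(s-1,t)=(1-t)^{s-1}\bigl(1+(q-1)t\bigr)^{(n-1)-(s-1)}$ and simplifying the exponent to $n-s$ then gives $F_n(s,t)=(1-t)^s\bigl(1+(q-1)t\bigr)^{n-s}$, closing the induction.

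The computation is essentially mechanical once the recursion is invoked, so I do not expect a serious obstacle; the only points demanding care are the index shift in the second sum and the vanishing of the boundary terms at $r=-1$ and $r=n$, which is precisely what lets the two sums close up into $F_{n-1}(v,t)$ and $t\,F_{n-1}(v,t)$. I would also remark that the forward-shift recursion (\ref{diag-zensin}) is not the convenient tool here, since it shifts the first argument $s$ while holding a column fixed, whereas the generating function is formed in the summation variable $r$; hence (\ref{diag-koutai}) is the natural choice.
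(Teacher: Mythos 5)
Your proof is correct and is essentially the paper's own argument in scalar form: the paper composes $s$ copies of the commutative diagram (\ref{d4}) and compares $\prod^s E$ and $\prod^s \Delta$ applied to the monomial vector $\left(t^i\right)_{i=0}^n$, which amounts entry-by-entry to exactly your one-step reduction $F_n(s,t)=(1-t)\,F_{n-1}(s-1,t)$, since (\ref{diag-koutai}) is the entrywise statement of that diagram relation. Both arguments then terminate at row $0$ via $\varPhi_{n-s}(0,u)=(q-1)^u\binom{n-s}{u}$ from (\ref{diag-kidou}), so your induction on $n$ is just the paper's $s$-fold iteration unrolled.
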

\begin{proof}
Combining different sized $s$ diagrams such as (\ref{d3}), we get following commutative diagram,

\begin{equation}
 \label{d4}
\begin{CD}
\mathbb{C}[_{G_n}\!\backslash \mathbb{F}^n] @>{\mathcal{F}_n}>>\mathbb{C}[_{G_n}\!\backslash \hat{\mathbb{F}}^n] \\
@V{\prod^s(\pi_{*})}V{\qquad\qquad\;\;\circlearrowright}V @VV{\prod^s (\hat{\pi}^*)}V \\
\mathbb{C}[_{G_{n-s}}\!\backslash \mathbb{F}^{n-s}] @>>{\mathcal{F}_{n-s}}> \mathbb{C}[_{G_{n-s}}\!\backslash \hat{\mathbb{F}}^{n-s}]
\end{CD}
\end{equation}
where $\prod^s(\pi_{*})$ means a composition of $s$ maps $\pi_{*}$ in (\ref{diag-osidasi}) in each size.
$\prod^s(\hat{\pi}^*)$ is likewise.
Let $\displaystyle \prod^sE$ and $\displaystyle \prod^s\Delta$ be the matrices of $\prod^s(\pi_{*})$ and $\prod^s(\hat{\pi}^*)$ respectively.
By (\ref{diagE}), $E\left(t^i\right)_{i=0}^n=(1-t)\left(t^i\right)_{i=0}^{n-1}$ holds.
Thus inductively, 
\begin{equation}
 \label{papa}
\varPhi_{n-s} (\prod^sE)\left(t^i\right)_{i=0}^n=\;(1-t)^s\varPhi_{n-s}\left(t^i\right)_{i=0}^{n-s}
=\;(1-t)^s \left( \sum_{u=0}^{n-s} \varPhi_{n-s}(i,u)t^u \right)_{i=0}^{n-s}.
\end{equation}
On the other hand by (\ref{diagDelta}), we have
\begin{equation}
 \label{mama}
(\prod^s\Delta) \varPhi_n \left(t^i\right)_{i=0}^n
=\; (\prod^s\Delta) \left( \sum_{r=0}^n\varPhi_n(i,r)t^r \right)_{i=0}^n
=\; \left( \sum_{r=0}^n\varPhi_n(i,r)t^r \right)_{i=s}^n.
\end{equation}
Comparing first elements of (\ref{papa}) and (\ref{mama}), we get
$\displaystyle
\sum_{r=0}^n\varPhi_n(s,r)t^r=(1-t)^s \sum_{u=0}^{n-s} \varPhi_{n-s}(0,u)t^u$,
and substituting $\displaystyle \varPhi_{n-s}(0,u)=(q-1)^u \binom{n-s}{u}$ by (\ref{diag-kidou}), we get the statement.
\end{proof}

We can also derive the well known generating function of Krawtchouk polynomials\cite{Koe} as follows:\; Let $T$ be a variable,
\begin{equation}
 \label{kraw-bo}
\sum_{y=0}^N \binom{N}{y}K_y(x; p,N) T^y
=\;(1-\frac{1-p}{p}T)^x (1+T)^{N-x}\qquad (0 \leq x \leq N,\; p \in \mathbb{C} -\{0,1\}).
\end{equation}
In fact, since both sides of Prop.\ref{diag-bo} are polynomials in $q$ which is an arbitrary prime power, it holds for $\forall q \in \mathbb{C}$.
Thus substituting Prop.\ref{diag-me} and $q=\frac{1}{1-p}$ and then changing variable by $t=\frac{1-p}{p}T$, we get (\ref{kraw-bo}).

Next let's derive a kind of multi-orthogonality relation of $\varPhi_n(s,r)$ or Krawtchouk polynomials.
\begin{pp}
 \label{diag-cho}
\; Let $k \geq 1$ and $0 \leq r_1,\cdots, r_k,r \leq n$ such that $r_1+\cdots +r_k \leq r$, then
$$
\sum_{s=0}^n (q-1)^s\binom{n}{s} \varPhi_n(s,r_1)\cdots \varPhi_n(s,r_k)\varPhi_n(s,r)
=\; \delta_{r_1+\cdots +r_k, r}\;\frac{n!}{(n-r)!r_1! \cdots r_k!}(q-1)^r q^n.
$$
\end{pp}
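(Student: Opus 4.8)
The plan is to recognize this proposition as the specialization of the general multi-orthogonality relation of Proposition \ref{taju} to the present pair $(A,G)=(\mathbb{F}^n, G_n)$, and then to evaluate the resulting combinatorial count under the hypothesis $r_1+\cdots+r_k \le r$.

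First I would apply Proposition \ref{taju} with $\mathcal{O}_i=\mathcal{O}(r_i)$ and $\mathcal{O}=\mathcal{O}(r)$. Since every orbit $\mathcal{O}(r)$ is stable under $a \mapsto -a$ (negation preserves the weight), equation (\ref{me}) shows all entries $\varPhi_n(s,r)$ are real, so the complex conjugate appearing in Proposition \ref{taju} may be dropped. Substituting $|\mathcal{P}(s)|=|\mathcal{O}(s)|=(q-1)^s\binom{n}{s}$ from (\ref{diag-kidou}) together with $|A|=q^n$, the left-hand side becomes exactly the sum in the statement, and it remains to show
$$
\sharp\{(a_1,\ldots,a_k)\in\mathcal{O}(r_1)\times\cdots\times\mathcal{O}(r_k)\mid \wt(a_1+\cdots+a_k)=r\}=\delta_{r_1+\cdots+r_k,\,r}\,\frac{n!}{(n-r)!\,r_1!\cdots r_k!}\,(q-1)^r.
$$

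The heart of the argument is this counting step, and the hypothesis $r_1+\cdots+r_k \le r$ is precisely what makes it clean. For any tuple the support of $a_1+\cdots+a_k$ lies in the union of the supports of the $a_i$, so
$$
\wt(a_1+\cdots+a_k)\le\Bigl|\bigcup_{i=1}^k \mathrm{supp}(a_i)\Bigr|\le\sum_{i=1}^k r_i\le r.
$$
Hence the sum can have weight $r$ only when the last hypothesis is an equality, $r_1+\cdots+r_k=r$, and the supports $\mathrm{supp}(a_i)$ are pairwise disjoint; conversely, when the supports are pairwise disjoint no cancellation can occur and the sum has weight exactly $\sum r_i=r$. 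This both produces the Kronecker delta (the count vanishes unless $r_1+\cdots+r_k=r$) and reduces the problem to counting tuples with pairwise disjoint supports of prescribed sizes.

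Finally I would carry out that count directly: choosing ordered pairwise disjoint support sets $S_1,\ldots,S_k\subset\{1,\ldots,n\}$ with $|S_i|=r_i$ contributes the multinomial factor $\binom{n}{r_1}\binom{n-r_1}{r_2}\cdots\binom{n-r_1-\cdots-r_{k-1}}{r_k}=\tfrac{n!}{(n-r)!\,r_1!\cdots r_k!}$, while each $a_i$ may independently take any nonzero value in each of its $r_i$ support positions, contributing $(q-1)^{r_1+\cdots+r_k}=(q-1)^r$. Multiplying these factors and then the overall $q^n$ from Proposition \ref{taju} gives the claimed formula. The only genuine obstacle is the weight inequality together with the observation that disjointness of supports rules out cancellation; once that is in place, everything else is bookkeeping.
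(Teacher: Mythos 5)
Your proposal is correct and follows essentially the same route as the paper: both apply Proposition~\ref{taju} with $\mathcal{O}_i=\mathcal{O}(r_i)$, $\mathcal{O}=\mathcal{O}(r)$, and reduce the claim to counting tuples with $a_1+\cdots+a_k \in \mathcal{O}(r)$, which under $r_1+\cdots+r_k \le r$ vanishes unless equality holds and otherwise gives the multinomial count $\frac{n!}{(n-r)!\,r_1!\cdots r_k!}(q-1)^r$. You merely spell out details the paper leaves implicit --- the reality of $\varPhi_n(s,r)$ justifying the dropped conjugate, and the disjoint-support argument behind its ``combinatorial consideration'' --- so the two proofs are the same in substance.
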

\begin{proof}
We count the number $N:=\sharp \{ (a_1,\cdots,a_k) \in \mathcal{O}(r_1) \times \cdots \times \mathcal{O}(r_k) \mid a_1+ \cdots +a_k \in \mathcal{O}(r) \}.$
When $r_1+\cdots +r_k < r$, obviously $N=0$.
When $r_1+\cdots +r_k = r$, by combinatorial consideration,
$$
N=\; \binom{n}{r_1} \binom{n-r_1}{r_2}\cdots \binom{n-r_1-\cdots-r_{k-1}}{r_k}(q-1)^r=\; \frac{n!}{(n-r)!r_1! \cdots r_k!}(q-1)^r.
$$
Thus by Prop.\ref{taju}, we get the result.
\end{proof}

\begin{col}
 \label{kraw-cho}
Let $k \geq 1$ and $0 \leq y_1,\cdots, y_k,y \leq N$ such that $y_1+\cdots +y_k \leq y$, then
$$
\sum_{x=0}^N p^x(1-p)^{N-x}\binom{N}{x} K_{y_1}(x;p,N) \cdots K_{y_k}(x;p,N) K_{y}(x;p,N)=\; \delta_{y_1+\cdots +y_k, y}\;\frac{(N-y_1)!\cdots (N-y_k)! y!}{(N!)^k}\frac{(1-p)^y}{p^y}.
$$
\end{col}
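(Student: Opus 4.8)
The plan is to read off Corollary \ref{kraw-cho} from Proposition \ref{diag-cho} by pure substitution, exactly as the generating function (\ref{kraw-bo}) was deduced from Proposition \ref{diag-bo}. Concretely, I would insert the closed form $\varPhi_n(s,r)=(q-1)^r\binom{n}{r}K_s(r;\frac{q-1}{q},n)$ of Proposition \ref{diag-me} into both sides of Proposition \ref{diag-cho}, match parameters as $N=n$, $p=\frac{q-1}{q}$, $x=s$, $y_i=r_i$, $y=r$, and then simplify the resulting constant until it agrees with the right-hand side of the corollary.

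The first step legitimizes the parameter change. Proposition \ref{diag-cho} is an identity of polynomials in $q$ — its left side is a polynomial with integer coefficients by Corollary \ref{seisitu1}(1), and its right side visibly is one — which holds at every prime power $q$. Since it holds for infinitely many $q$, it is an identity of polynomials, hence valid for all complex $q$; in particular I may set $q=\frac{1}{1-p}$, equivalently $p=\frac{q-1}{q}$, so that $q-1=\frac{p}{1-p}$ and $q(1-p)=1$.

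The second step reshapes the sum into the classical "sum over the argument" form. After substituting Proposition \ref{diag-me}, the constants $\prod_i(q-1)^{r_i}\binom{n}{r_i}$ and $(q-1)^r\binom{n}{r}$ pull out, leaving $\sum_{s}(q-1)^s\binom{n}{s}\prod_iK_s(r_i;p,n)\,K_s(r;p,n)$, where $s$ occupies the \emph{degree} slot; but Corollary \ref{kraw-cho} sums over the argument. I would bridge this with the self-duality $K_s(r;p,n)=K_r(s;p,n)$, which is just the symmetry (\ref{me-taisho}) translated through $|\mathcal{O}(r)|=(q-1)^r\binom{n}{r}$ and Proposition \ref{diag-me}. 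Applying it to each factor turns the sum into $\sum_{x}(q-1)^x\binom{n}{x}\prod_iK_{y_i}(x;p,N)\,K_y(x;p,N)$ with $x=s$, after which the weight is matched via $(q-1)^x\binom{n}{x}=(1-p)^{-N}\,p^x(1-p)^{N-x}\binom{N}{x}$.

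The final step is the assembly of prefactors, which I expect to be the only real obstacle. Writing $S$ for the sum in the corollary, the identity becomes (under $r=r_1+\cdots+r_k$)
$$\Bigl(\prod_i(q-1)^{r_i}\binom{n}{r_i}\Bigr)(q-1)^r\binom{n}{r}\,(1-p)^{-N}\,S=\frac{n!}{(n-r)!\,r_1!\cdots r_k!}(q-1)^rq^n,$$
and solving for $S$ is routine but error-prone: the factor $(q-1)^r$ cancels, the degenerate product $q^n(1-p)^{N}=q^n(1-p)^n=1$ collapses by $q(1-p)=1$, the surviving $(q-1)^{-r}=(\frac{1-p}{p})^r=\frac{(1-p)^r}{p^r}$ produces the $\frac{(1-p)^y}{p^y}$ factor, and the binomial bookkeeping $\frac{n!}{(n-r)!\,r_1!\cdots r_k!}\cdot\frac{r!(n-r)!}{n!}\cdot\prod_i\frac{r_i!(n-r_i)!}{n!}$ telescopes to $\frac{(N-y_1)!\cdots(N-y_k)!\,y!}{(N!)^k}$. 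All conceptual content is carried by Proposition \ref{diag-cho} and the self-duality; the hard part is merely tracking these constants carefully and confirming that the factors $q^n(1-p)^n$ truly cancel.
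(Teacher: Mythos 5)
Your proposal is correct and takes essentially the same route as the paper, whose entire proof is the single line: since Prop.~\ref{diag-cho} holds for all $q \in \mathbb{C}$, substitute Prop.~\ref{diag-me} and $q=\frac{1}{1-p}$, and the corollary follows. You merely spell out what that line leaves implicit --- the polynomiality argument via Cor.~\ref{seisitu1}(1), the self-duality $K_s(r;p,n)=K_r(s;p,n)$ needed to move the summation from the degree slot to the argument slot (this is immediate from Def.~\ref{kraw}, the ${}_2F_1$ being symmetric in its numerator parameters $-y$ and $-x$, so your detour through the symmetry (\ref{me-taisho}) is sound but unnecessary), and the constant bookkeeping including the cancellation $q^n(1-p)^n=1$ --- and all of these details check out.
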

\noindent
\underline{Remark}.\; When k=1, it becomes the orthogonality relation well known \cite{Koe} as follows:
$$
\sum_{x=0}^N p^x (1-p)^{N-x} \binom{N}{x} K_y(x;p,N) K_{y'}(x;p,N)=\;
\delta_{y,y'} \; \frac{(1-p)^y}{\binom{N}{y}\, p^y}.
$$
\begin{proof}
\; Since Prop.\ref{diag-cho} holds for $\forall q \in \mathbb{C}$, substituting Prop.\ref{diag-me} and $q=\frac{1}{1-p}$, it follows.
\end{proof}


\subsection{The pair $(\M_{n,m}, \G_n \times \G_m)$\;$(n \leq m)$}
In this subsection, let $n \leq m$ for simplicity. 
Let $G_{n,m} =\G_n \times \G_m$ and consider a pair $(\M_{n,m}, G_{n,m})$ with an action $\rho$ such that 
\begin{equation}
\label{mat-act}
\rho(g,h) a = ga\,^t\!h\qquad \bigl( (g,h) \in \G_n \times \G_m,\; a \in \M_{n,m} \bigr)
\end{equation}
(same as (\ref{act2})).
Obviously orbits are characterized by rank of the matrices, that is,
\begin{equation}
_{G_{n,m}}\! \backslash \M_{n,m} =\; \{\mathcal{O}(r) \mid 0 \leq r \leq n\}, \qquad \mathcal{O}(r) = \mathcal{O}_{n,m}(r) =  \{a \in \M_{n,m}|\; \rk a = r \}.
\end{equation}
On the other hand, the ordinary non-degenerate symmetric bilinear form $\langle \;|\; \rangle$ on $\M_{n,m}$ is given by
\begin{equation}
\label{form}
\langle a|b \rangle=\; \tr(\, {}^t \! ab) \quad (a, b \in \M_{n,m}).
\end{equation}
$\rho$ is adjoint-free according to this form. In fact the adjoint map of $\rho(g,h)$ is given by $\rho({}^t\!g,\;{}^t\!h)$.
Thus orbits in $(\M_{n,m})\,\hat{}\;$ correspond to those in $\M_{n,m}$ by the manner of (\ref{taiou}), that is,
\begin{equation}
_{G_{n,m}}\! \backslash (\M_{n,m})\,\hat{}\;=\; \{\mathcal{P}(s) \mid 0 \leq\; s\; \leq n\},\qquad \mathcal{P}(s)=\mathcal{P}_{n,m}(s)=\{\theta_a \mid \rk a =\,s \}.
\end{equation}
Let $\mathcal{F}_{n,m}: \mathbb{C}[_{G_{n,m}}\! \backslash \M_{n,m}] \rightarrow  \mathbb{C}[_{G_{n,m}}\! \backslash (\M_{n,m})\,\hat{}\;]$ be the group-invariant Fourier transformation,
and $\varPhi_{n,m}=(\varPhi_{n,m}(s,r))$ be the canonical matrix of it.
When $n=0$, we can regard $\M_{0,m}=0$ and $\varPhi_{0,m}(0,0)=1$ for $\forall m \geq 0$.
When $n=1$, the values of $\varPhi_{1,m}\;(m \geq 1)$ are in the following table:
\begin{table}[htb]
\begin{center}
\scalebox{1.1}[1.1]{
\begin{tabular}{|c||c|c|} \hline
${}_{s} \quad {}^{r}$ & 0 & 1 \\ \hline \hline
0 & {\footnotesize$1$} & {\footnotesize$q^m-1$} \\ \hline
1 & {\footnotesize$1$} & {\footnotesize$-1$} \\ \hline
\end{tabular}
}
\end{center}
\caption{The values $\varPhi_{1,m}(s,r)$.}
\end{table}

For an arbitrary $n$, we show
\begin{pp}
 \label{mat-me}
\begin{equation}\nonumber
\varPhi_{n,m}(s,r)
=\; (-1)^rq^{\binom{r}{2}} (q^m;q^{-1})_r \begin{bmatrix} n \\ r \end{bmatrix} _q \, K^{\Aff}_s(r; q^{-m}, n; q) \qquad (0 \leq s,\!r \leq n \leq m ).
\end{equation}
\end{pp}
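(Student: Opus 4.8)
The plan is to mirror the computation carried out for $\bbf^n$, shrinking the size by one and then invoking Theorem \ref{main3}, Theorem \ref{main4} and Proposition \ref{zenkasikitoku}(3). The crucial design choice is that here one must shrink \emph{both} dimensions at once: I would take $\pi:\M_{n,m}\to\M_{n-1,m-1}$ to be deletion of the last row and last column, so that $m-n$ stays fixed along the recursion. Its adjoint ${}^t\!\pi$ with respect to the form (\ref{form}) embeds $b\in\M_{n-1,m-1}$ as the top-left block, and with the intersection character $\theta_e$ for $e=E_{n,m}$ (the matrix unit at the corner) one gets ${}^t\!\pi(b)+e=\left(\begin{smallmatrix} b & 0\\ 0 & 1\end{smallmatrix}\right)$, whose rank is $\rk b+1$. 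Hence $\hat{\pi}_e$ sends the rank-$\omega$ orbit to the rank-$(\omega+1)$ orbit, i.e. $\tilde\omega=\omega+1$, exactly as $\tilde v=v+1$ in the $\bbf^n$ case; this is why bordering by a full corner entry (rather than by a row only) is needed, as it forces the rank to jump deterministically and makes $(\natural 3')$ hold. I would confirm $(\natural)$ through Proposition \ref{flat1} with $\iota(g,h)=(\diag(g,1),\diag(h,1))$, observing that $\iota(g,h)$ fixes $e$ under the adjoint action, so $(\flat 2)$ holds trivially with $(\gamma,\eta)=(I_n,I_m)$.

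The real work is computing the matrix $E^e=(E^e(\gamma,r))$ of $\pi^e_*$ on canonical bases, and this is the step I expect to be the main obstacle. Choosing the block representative $a_0=\left(\begin{smallmatrix} b_0 & 0\\ 0&0\end{smallmatrix}\right)$ with $\rk b_0=\gamma$, formula (\ref{E1}) turns $E^e(\gamma,r)$ into a character-weighted count of the rank of the bordered matrix $M=\left(\begin{smallmatrix} b_0 & c\\ {}^t\!d & \beta\end{smallmatrix}\right)$, summed over the last column $c$, last row ${}^t\!d$ and corner $\beta$, weighted by $\overline{\theta(\beta)}$. I would normalise $b_0=\left(\begin{smallmatrix} I_\gamma & 0\\ 0 & 0\end{smallmatrix}\right)$ and use the identity block as pivots (a Schur-complement reduction) to write $\rk M=\gamma+\rho$, where $\rho\in\{0,1,2\}$ is the rank of a small matrix built from the parts of $c,d$ outside the pivot range and the corrected corner $\tilde\beta=\beta-{}^t\!d'c'$. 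Summing first over $\beta$ (equivalently over $\tilde\beta$) with weight $\overline{\theta(\beta)}$, nontriviality of $\theta$ forces $\sum_{\tilde\beta}\overline{\theta(\tilde\beta)}=0$, which kills every configuration in which $\rho$ is insensitive to $\tilde\beta$, while a short Gauss-sum evaluation of the remaining $c',d'$ sum contributes a clean factor $q^\gamma$. I expect the outcome $E^e(\gamma,r)=q^{\gamma}(\delta_{r\gamma}-\delta_{r,\gamma+1})$, with the vanishing $\sum_r E^e(\gamma,r)=0$ from Proposition \ref{Erow} (valid since $\theta_e$ is nontrivial on $\ker\pi$, which contains the corner) serving as a consistency check.

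With $E^e$ in hand the two recursions follow formally. Theorem \ref{main3}, for $\tilde\omega=s+1$, gives
$$
\varPhi_{n,m}(s+1,r)=\sum_{\gamma}E^e(\gamma,r)\,\varPhi_{n-1,m-1}(s,\gamma)=q^{r}\varPhi_{n-1,m-1}(s,r)-q^{r-1}\varPhi_{n-1,m-1}(s,r-1),
$$
which is BPR (\ref{koutai}) with $a=b=1$ and $t=q$. Theorem \ref{main4}(2), using $|\M_{n,m}|/|\M_{n-1,m-1}|=q^{n+m-1}$ and (\ref{barme-v}), gives
$$
\varPhi_{n,m}(u+1,s)-\varPhi_{n,m}(u,s)=-q^{\,m+n-1-u}\varPhi_{n-1,m-1}(u,s-1),
$$
which is FPR (\ref{zensin}) with $c=1$, $d=q^{m-n}$ and $t=q$. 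Setting $f_N(y,x)=\varPhi_{N,N+(m-n)}(y,x)$, the family $\{f_N\}$ satisfies both recursions with these constant parameters and $\sigma=\varPhi_{0,m-n}(0,0)=1$; since $t=q\neq 1$, Proposition \ref{zenkasikitoku}(3) applies. Substituting $\tfrac{b}{d}t^{-n}=q^{-m}$ and $\tfrac{d}{b}t^{n}=q^{m}$ into parts (i) and (ii) yields $O_n(r)=(-1)^rq^{\binom{r}{2}}(q^m;q^{-1})_r\begin{bmatrix} n\\ r\end{bmatrix}_q$ and hence $\varPhi_{n,m}(s,r)=O_n(r)\,K^{\Aff}_s(r;q^{-m},n;q)$, which is the claimed formula. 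The case $n=0$ (with $\varPhi_{0,m}(0,0)=1$) anchors the induction.
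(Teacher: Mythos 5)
Your proposal is correct and follows essentially the same route as the paper's own proof: the same projection $\pi:\M_{n,m}\to\M_{n-1,m-1}$ deleting the last row and column, the same corner intersection character $e=(0\mid 0,0,1)$ giving $\tilde{v}=v+1$, the same Schur-complement/Gauss-sum evaluation yielding $E(u,r)=q^u(\delta_{ru}-\delta_{r,u+1})$ (the paper gets the $r=u+1$ entry from Prop.~\ref{Erow}, which you invoke as a consistency check), and the identical reduction of the two recursions to Prop.~\ref{zenkasikitoku}(3) with $a=b=c=1$, $d=q^{m-n}$, $t=q$, $\sigma=1$. The only cosmetic deviation is that you certify $(\natural)$ via Prop.~\ref{flat1} with $\iota(g,h)=(\diag(g,1),\diag(h,1))$, whereas the paper checks $(\natural 3')$ directly from ${}^t\!\pi(b)+e=(b\mid 0,0,1)$; both are immediate.
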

\noindent
In order to calculate it, we use a relation between $\mathcal{F}_{n,m}$ and $\mathcal{F}_{n-1,m-1}$\,($1 \leq n \leq m$).
For convenience we use a notation
\begin{equation} 
\label{notation1}
(b\mid y, z, w):=\; \left(\begin{array}{cc}\\[-10pt]
b& y \\[0pt]
z& w  \end{array}\right)
\in \M_{n,m}
\qquad \text{for}\;\; b \in \M_{n-1,m-1},\; y \in \mathbb{F}^{n-1},\; z \in \mathbb{F}^{m-1} \text{and}\;\; w \in \mathbb{F}.
\end{equation}
Let $\pi:\M_{n,m} \rightarrow \M_{n-1,m-1}$ be a projection map $(b \mid y,z,w) \mapsto b$.
Then the adjoint map ${}^t\!\pi$ according to the forms (\ref{form}) is an embedding map $b \mapsto (b\mid 0,0,0)$.
Let's take an intersection character $\theta_e$ where $e=(0 \mid 0,0,1) \in \M_{n,m}$ and define $\pi_*=\pi_*^{\theta_e}$ and $\hat{\pi}=\hat{\pi}_{\theta_e}$ by (\ref{osidasi}) and (\ref{hatpi}).
Since ${}^t\!\pi(b) +e=(b\mid 0,0,1)\;(\forall b \in \M_{n-1,m-1})$, we can ensure that ($\natural 3'$) holds, $\tilde{v}=\hat{\pi}(v)=v+1$ where we see $\hat{\pi}:\{0,1,\dots,n-1\} \to \{0, 1, \dots, n\}$ by (\ref{hatpi-para}), and the matrix of $\hat{\pi}^*$ on canonical bases is
\begin{equation}
 \label{matDelta}
\Delta=(\delta_{v+1,s})_{0\leq v \leq n-1,\; 0 \leq s \leq n}
\end{equation}
(same as (\ref{diagDelta})).
A commutative diagram(\ref{d2}) here is as follows:
\begin{equation}
 \label{mat-d}
\begin{CD}
\mathbb{C}[_{G_{n,m}}\!\backslash \M_{n,m}] @>{\mathcal{F}_{n,m}}>>\mathbb{C}[_{G_{n,m}}\!\backslash (\M_{n,m})\,\hat{}\;] \\
@V{\pi_{*}}V{\qquad\qquad\qquad\quad\;\;\circlearrowright}V  @VV{\hat{\pi}^*}V \\
\mathbb{C}[_{G_{n-1,m-1}}\!\backslash \M_{n-1,m-1}] @>>{\mathcal{F}_{n-1,m-1}}> \mathbb{C}[_{G_{n-1,m-1}}\!\backslash (\M_{n-1,m-1})\,\hat{}\;]
\end{CD}
\end{equation}
Now let's calculate the matrix $E=(E(u,r))_{u,r}$ of $\pi_*$ on canonical bases.
The conclusion is 
\begin{equation}
 \label{matE}
E(u,r)=\; \begin{cases} q^u & (r=u)\\ -q^u & (r=u+1) \\ 0 & (otherwise) \end{cases}
\qquad\quad (0\leq u \leq n-1,\;0 \leq r \leq n).
\end{equation}
We prove it below.
We use
$b=
\left(\!\!\!\!
\begin{array}{ccc}
 \begin{array}{ccc}
 1&&\\[-10pt]
 & \!\!\!\!\!\ddots&\\[-5pt]
 &&\!\!\!\!\!1
 \end{array}
&&\\[0pt]
&& 
\end{array}
\right)
 \in \M_{n-1,m-1}
$, where the number of $1$ is $u$ and the blanks are filled by $0$,
as a representative of the orbit $\mathcal{O}_{n-1,m-1}(u)$.
Then,
\begin{equation}
 \label{pico}
E(u,r)= \pi_*\chi_r(b)= \sum_{a \in \pi^{-1}(b)}\overline{\theta_e(a)}\chi_r(a)
=\sum_{y \in \mathbb{F}^{n-1}} \sum_{z \in \mathbb{F}^{m-1}} \sum_{w \in \mathbb{F}} \overline{\theta(w)}\chi_r(b \mid y,z,w).
\end{equation}
Since $\rk (b \mid y,z,w)$ is $u,\;u+1$ or $u+2$, we have $E(u,r)=0\;(r \neq u,\,u+1,\;u+2)$.
Let's write $y=(y', y''),\; y' \in \mathbb{F}^u,\; y'' \in \mathbb{F}^{n-u-1}$, and $z=(z', z''),\; z' \in \mathbb{F}^u,\; z'' \in \mathbb{F}^{m-u-1}$.
Then $\rk(b \mid y,z,w)=u+2$ if and only if $y'' \neq 0$ and $z'' \neq 0$.
Since this condition doesn't depend on $w \in \mathbb{F}$, (\ref{pico}) becomes a multiple of $\sum_{w \in \mathbb{F}} \overline{\theta}(w)=0$, so we have $E(u,u+2)=0$.
Next when $r=u$, we must heve $y''=0$ and $z''=0$.
And then $\rk(b \mid y, z, w)= \rk(b \mid 0,0,w-\sum_{i=1}^u y_iz_i)$, where $y'=(y_1, \dots y_u),\;z'=(z_1, \dots z_u)$, by row and column operations.
Therefore 
$$E(u,u)= \sum_{y' \in \mathbb{F}^u} \sum_{z' \in \mathbb{F}^u} \overline{\theta}(\sum_{i=1}^u y_iz_i)=\left( \sum_{x \in \mathbb{F}}\sum_{x' \in \mathbb{F}} \overline{\theta}(xx') \right)^u = q^u.
$$
For the last case $r=u+1$, we can use Prop.\ref{Erow} and quickly get $E(u,u+1)=-q^u$.
So we proved (\ref{matE}).

Now we are prepared to use theorems in sec.2.
Firstly by Thm.\ref{main3}, we get
\begin{equation}
 \label{mat-koutai}
\varPhi_{n,m}(v+1, r)=\; q^r\varPhi_{n-1,m-1}(v,r)-q^{r-1}\varPhi_{n-1,m-1}(v,r-1)\qquad (0 \leq v \leq n-1,\; 0\leq r \leq n),
\end{equation}
where interpret $\varPhi_{n-1}(v,-1)=\varPhi_{n-1}(v,n)=0$.
And by Th.\ref{main4}, we get
\begin{equation}
 \label{mat-zensin}
\varPhi_{n,m}(u+1,s)-\varPhi_{n,m}(u,s)=\; -q^{n+m-u-1} \varPhi_{n-1,m-1}(u,s-1) \qquad (0 \leq u \leq n-1,\; 0\leq s \leq n).
\end{equation}
So by setting $f_N(y,x)=\varPhi_{N,N+m-n}(y,x)\;(0 \leq y,x \leq N)$,
(\ref{mat-koutai}) and (\ref{mat-zensin}) imply BPR(\ref{koutai}) and FPR(\ref{zensin}) of the case $a=b=c=1,\; d=q^{m-n},\; t=q$ and $\sigma=\varPhi_{0,m-n}(0,0)=1$.
So we conclude by Prop.\ref{zenkasikitoku}(3),
\begin{eqnarray}
 \label{mat-kidou}
|\mathcal{O}_{n,m}(r)|=\; (-1)^r q^{\binom{r}{2}} (q^m; q^{-1})_r \begin{bmatrix} n \\ r \end{bmatrix} _q,\quad \text{and} \\
\varPhi_{n,m}(s,r)=\; |\mathcal{O}_{n,m}(r)|\, K_s^{\Aff}(r; q^{-m}, n; q).
\end{eqnarray}

Similarly to Cor.\ref{seisitu1}, we can prove properties of $\varPhi_{n,m}(s,r)$:
\begin{col}
\label{seisitu2}
{\rm (1)}\quad $\varPhi_{n,m}(s,r)$ is a polynomial in $q$ over integers.

{\rm (2)}\quad As $q \rightarrow 1$, if $r \leq s$ then
$\displaystyle \varPhi_{n,m}(s,r) \rightarrow (-1)^r \binom{s}{r}$,
otherwise $\displaystyle \varPhi_{n,m}(s,r) \rightarrow 0$.
\end{col}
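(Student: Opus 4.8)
The plan is to follow verbatim the template of Corollary~\ref{seisitu1}, now feeding in the matrix-case recursions (\ref{mat-koutai}) and (\ref{mat-zensin}) together with the explicit orbit count (\ref{mat-kidou}) in place of their $\mathbb{F}^n$ analogues. A preliminary remark that guides the whole strategy: although Prop.~\ref{mat-me} writes $\varPhi_{n,m}(s,r)$ as the product of $|\mathcal{O}_{n,m}(r)|$ with an Affine $q$-Krawtchouk polynomial in $q^{-r}$, polynomiality in $q$ with positive powers and integer coefficients is not at all transparent from that closed form, so throughout I would argue from the recursions rather than from the formula.

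For part (1), I would induct on $n$ (equivalently on $N$ in the family $f_N(y,x)=\varPhi_{N,\,N+m-n}(y,x)$ of sec.~\ref{3}). The base row $s=0$ is $\varPhi_{n,m}(0,r)=|\mathcal{O}_{n,m}(r)|$, which (\ref{mat-kidou}) exhibits as $(-1)^r q^{\binom{r}{2}}(q^m;q^{-1})_r\begin{bmatrix} n\\ r\end{bmatrix}_q$; each factor lies in $\mathbb{Z}[q]$ — the product $(q^m;q^{-1})_r=\prod_{i=0}^{r-1}(1-q^{m-i})$ manifestly, and the Gaussian polynomial $\begin{bmatrix} n\\ r\end{bmatrix}_q$ by its standard integrality — so $\varPhi_{n,m}(0,r)\in\mathbb{Z}[q]$. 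For $s\geq 1$ the BPR (\ref{mat-koutai}) reads $\varPhi_{n,m}(s,r)=q^r\varPhi_{n-1,m-1}(s-1,r)-q^{r-1}\varPhi_{n-1,m-1}(s-1,r-1)$; since $q^r,q^{r-1}\in\mathbb{Z}[q]$ and the two lower-index entries are polynomials over the integers by the inductive hypothesis, so is the left-hand side. This closes the induction.

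For part (2), the role of (1) is that each $\varPhi_{n,m}(s,r)$ is now a genuine element of $\mathbb{Z}[q]$, so its $q\to 1$ limit exists and equals its evaluation at $q=1$; moreover, since (\ref{mat-koutai}) and (\ref{mat-zensin}) hold for every prime power $q$ between such polynomials, they are identities in $\mathbb{Z}[q]$ and may legitimately be specialized at $q=1$. The family $f_N(y,x)=\varPhi_{N,\,N+m-n}(y,x)$ satisfies BPR and FPR with $a=b=c=1$, $d=q^{m-n}$, $t=q$; under $q\to 1$ every coefficient degenerates, $d\to 1$ and $t\to 1$, so the limit family $\bar f_N$ satisfies BPR (\ref{koutai}) and FPR (\ref{zensin}) with $a=b=c=d=1$, $t=1$, and $\sigma=\varPhi_{0,m-n}(0,0)=1$. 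This is exactly the hypothesis of Prop.~\ref{zenkasikitoku}(1), whose conclusion (ii) gives $\bar f_N(y,x)=(-1)^x\binom{y}{x}$ for $x\leq y$ and $0$ for $x>y$; translating back yields $\varPhi_{n,m}(s,r)\to(-1)^r\binom{s}{r}$ when $r\leq s$ and $\to 0$ otherwise, as claimed.

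The argument is routine once (1) is in hand, so the only point demanding care — the ``main obstacle'' — is the justification just invoked: that specializing the recursions at $q=1$ is valid. This is precisely where part (1) is indispensable, and it is also the step where one must verify that the several genuine $q$-power factors ($q^r$ and $q^{r-1}$ in the BPR, and $q^{n+m-u-1}$ with $d=q^{m-n}$, $t=q$ in the FPR) all collapse to $1$, so that the $q\to1$ limit lands in the $t=1$, $b=d$ branch (1) of Prop.~\ref{zenkasikitoku} rather than in the Krawtchouk branch (2) that governed the $\mathbb{F}^n$ example.
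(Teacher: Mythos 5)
Your proposal is correct and takes essentially the same approach as the paper, which proves this corollary ``similarly to Cor.~\ref{seisitu1}'': part (1) inductively from the recursion (\ref{mat-koutai}) together with the orbit-size formula (\ref{mat-kidou}), and part (2) by noting that the $q \rightarrow 1$ limit satisfies BPR~(\ref{koutai}) and FPR~(\ref{zensin}) with $a=b=c=d=1$, $t=1$, $\sigma=1$ and then invoking Prop.~\ref{zenkasikitoku}(1). Your explicit justification for specializing at $q=1$ --- that the recursions hold between integer polynomials at infinitely many prime powers and are therefore identities in $\mathbb{Z}[q]$ --- merely spells out a step the paper leaves implicit.
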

\noindent
\underline{Remark}. \;(2) implies that in this example also, the reversible formula (\ref{sin-hanten}) is a $q$-analogue of the reversible formula in Prop.\ref{mori}.

Next we derive the generating function as follows:
\begin{pp}
 \label{mat-bo}
Let $t$ be a variable. We have
$$
\sum_{r=0}^n \varPhi_{n,m}(s,r) t^r=\;(t; q)_s \cdot \sum_{u =0}^{n-s} (-1)^u q^{\binom{u}{2} +su} (q^{m-s}; q^{-1})_u \begin{bmatrix} n-s \\ u \end{bmatrix}_q\,t^u        \qquad (0 \leq s \leq n \leq m).
$$
\end{pp}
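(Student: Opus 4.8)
The plan is to imitate the proof of Prop.~\ref{diag-bo}, replacing the single-step diagram (\ref{d3}) by the matrix diagram (\ref{mat-d}) and iterating it $s$ times. Composing $s$ copies of (\ref{mat-d}) produces a commutative square relating $\mathcal{F}_{n,m}$ and $\mathcal{F}_{n-s,m-s}$, whose vertical arrows are $\prod^s(\pi_{*})$ and $\prod^s(\hat{\pi}^*)$; on canonical bases this reads $(\prod^s\Delta)\,\varPhi_{n,m}=\varPhi_{n-s,m-s}\,(\prod^s E)$, with $E$ given by (\ref{matE}) and $\Delta$ by (\ref{matDelta}). Note $n-s\leq m-s$ is preserved, so each intermediate square is of the admissible shape.

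The decisive computation is the action of $E$ on the monomial vector $\left(t^r\right)_{r=0}^{n}$. By (\ref{matE}), its $u$-th entry is $q^u t^u-q^u t^{u+1}=(1-t)(qt)^u$, so $E$ sends $\left(t^r\right)_r$ to $(1-t)$ times the monomial vector with base $qt$. The point --- and the one place where this example differs essentially from the $\bbf^n$ case, where the base stayed fixed --- is that each application of $E$ multiplies the base of the monomials by a further factor of $q$. Iterating, after $s$ applications the base runs through $t,qt,\dots,q^{s}t$ and one collects the factors $(1-t)(1-qt)\cdots(1-q^{s-1}t)=(t;q)_s$, giving
$$
\left(\textstyle\prod^s E\right)\left(t^r\right)_{r=0}^{n}=(t;q)_s\left((q^s t)^u\right)_{u=0}^{n-s}.
$$

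I would then apply $\varPhi_{n-s,m-s}$ to this vector and read off its first ($u=0$) entry, which equals $(t;q)_s\sum_{u=0}^{n-s}\varPhi_{n-s,m-s}(0,u)(q^s t)^u$. On the other side of the identity, $\prod^s\Delta$ selects the entries of $\varPhi_{n,m}\left(t^r\right)_r$ from index $s$ onward, exactly as in (\ref{mama}), so its first entry is $\sum_{r=0}^{n}\varPhi_{n,m}(s,r)t^r$. Equating the two first entries yields
$$
\sum_{r=0}^{n}\varPhi_{n,m}(s,r)t^r=(t;q)_s\sum_{u=0}^{n-s}\varPhi_{n-s,m-s}(0,u)\,q^{su}t^{u}.
$$

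Finally I would substitute $\varPhi_{n-s,m-s}(0,u)=|\mathcal{O}_{n-s,m-s}(u)|=(-1)^u q^{\binom{u}{2}}(q^{m-s};q^{-1})_u\begin{bmatrix} n-s \\ u \end{bmatrix}_q$ from (\ref{mat-kidou}) and combine $q^{\binom{u}{2}}q^{su}=q^{\binom{u}{2}+su}$, which reproduces the claimed right-hand side. The only genuine obstacle is the bookkeeping of the $q$-powers in the iteration of $E$; once the base-shift $t\mapsto qt$ is tracked correctly, the emergence of $(t;q)_s$ and the final simplification are routine.
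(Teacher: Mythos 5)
Your proposal is correct and is essentially the paper's own proof: the paper likewise composes $s$ copies of the diagram (\ref{mat-d}), notes $E\left(t^i\right)_{i=0}^n=(1-t)\left((qt)^i\right)_{i=0}^{n-1}$ from (\ref{matE}) so that iteration produces the factor $(t;q)_s$ together with the base shift $t \mapsto q^s t$, and then compares first entries of $(\prod^s\Delta)\,\varPhi_{n,m}\left(t^i\right)_{i=0}^n$ and $\varPhi_{n-s,m-s}\,(\prod^s E)\left(t^i\right)_{i=0}^n$ before substituting (\ref{mat-kidou}). Your tracking of the $q$-power bookkeeping, including $q^{\binom{u}{2}}q^{su}=q^{\binom{u}{2}+su}$, matches the paper's computation exactly.
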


\begin{proof}
It is proved just like Prop.\ref{diag-bo}.
Here combined diagram is as follows:
\begin{equation}
 \label{mat-comb-d}
\begin{CD}
\mathbb{C}[_{G_{n,m}}\!\backslash \M_{n,m}] @>{\mathcal{F}_{n,m}}>>\mathbb{C}[_{G_{n,m}}\!\backslash (\M_{n,m})\,\hat{}\;] \\
@V{\prod^s(\pi_{*})}V{\qquad\qquad\qquad\quad\;\;\circlearrowright}V @VV{\prod^s (\hat{\pi}^*)}V \\
\mathbb{C}[_{G_{n-s,m-s}}\!\backslash \M_{n-s,m-s}] @>>{\mathcal{F}_{n-s,m-s}}> \mathbb{C}[_{G_{n-s,m-s}}\!\backslash (\M_{n-s,m-s})\,\hat{}\;]
\end{CD}
\end{equation}
Let $\displaystyle \prod^sE$ and $\displaystyle \prod^s\Delta$ be the matrices of $\prod^s(\pi_{*})$ and $\prod^s(\hat{\pi}^*)$ respectively.
Remark here
$E\left(t^i\right)_{i=0}^n=(1-t)\left((qt)^i\right)_{i=0}^{n-1}$
by (\ref{matE}).
Thus inductively, 
\begin{equation}
 \label{matpapa}
\varPhi_{n-s,m-s} (\prod^sE)\left(t^i\right)_{i=0}^n
=\;(t; q)_s\,\varPhi_{n-s,m-s} \left((q^st)^i\right)_{i=0}^{n-s}
=\;(t; q)_s \left( \sum_{u=0}^{n-s} \varPhi_{n-s, m-s}(i,u) q^{su}t^u \right)_{i=0}^{n-s}.
\end{equation}
On the other hand, $\displaystyle (\prod^s\Delta) \varPhi_{n,m} \left(t^i\right)_{i=0}^n$ is just like (\ref{mama}).
Comparing first elements of them and substituting (\ref{mat-kidou}), we get the statement.
\end{proof}
\noindent
We can derive the generating function of Affine $q$-Krawtchouk polynomials \cite{Koe} also as follows:\; Let $T$ be a variable,
\begin{equation}
 \label{affkraw-bo}
\sum_{y=0}^N (a; q)_y \begin{bmatrix} N \\ y \end{bmatrix}_q K_y^{\Aff}(x; a, N; q) \,T^y
=\;(aT; q)_x \cdot
\sum_{u =0}^{N-x} (q^xa; q)_u \begin{bmatrix} N-x \\ u \end{bmatrix}_q\,T^u        \qquad (0 \leq x \leq N,\;a \in \mathbb{C} -\{0,1\}).
\end{equation}
In fact, substituting Prop.\ref{mat-me}, setting $a=q^{-m}$ and changing variable in Prop.\ref{mat-bo},
it becomes an equation of rational functions in $a$. And for $m \geq n$ be arbitrary, we have the equation for $\forall a \in \mathbb{C} -\{0,1\}$.

Next let's derive a multi-orthogonality relation of $\varPhi_{n,m}(s,r)$ or Affine $q$-Krawtchouk polynomials.
\begin{pp}
 \label{mat-cho}
\; Let $k \geq 1$ and $0 \leq r_1,\cdots, r_k,r \leq n$ such that $\displaystyle \sum_{i} r_i \leq r$, then
$$
\sum_{s=0}^n (-1)^s q^{\binom{s}{2}} (q^m; q^{-1})_s \begin{bmatrix} n \\ s \end{bmatrix}_q\, \prod_{i}\varPhi_{n,m}(s,r_i)\varPhi_{n,m}(s,r)
=\; \delta_{\sum_{i}r_i, r} \cdot(-1)^r q^{nm+\binom{r}{2} + \sum_{i<j}r_ir_j}
\frac{(q^m; q^{-1})_r \, (q^n; q^{-1})_r}
{\prod_{i}(q; q)_{r_i}}.
$$
\end{pp}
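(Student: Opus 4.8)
The plan is to recognize the left-hand side as an instance of the multi-orthogonality relation Prop.\ref{taju} and then reduce everything to a single combinatorial count of matrix tuples of prescribed ranks whose sum has maximal rank, exactly parallel to the proof of Prop.\ref{diag-cho}. First I would observe, via (\ref{mat-kidou}), that the weight factor
$(-1)^s q^{\binom{s}{2}}(q^m;q^{-1})_s \begin{bmatrix} n \\ s \end{bmatrix}_q$ appearing in the sum is precisely the orbit size $|\mathcal{O}_{n,m}(s)|=|\mathcal{P}_{n,m}(s)|$. Moreover every orbit $\mathcal{O}(r)$ is stable under $a\mapsto -a$ (negation preserves rank), so all $\varPhi_{n,m}(s,r)$ are real and $\overline{\varPhi_{n,m}(s,r)}=\varPhi_{n,m}(s,r)$. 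Hence the left-hand side equals
$$
\sum_{s=0}^n |\mathcal{P}(s)|\,\varPhi_{n,m}(s,r_1)\cdots\varPhi_{n,m}(s,r_k)\overline{\varPhi_{n,m}(s,r)},
$$
and Prop.\ref{taju} (with $|A|=|\M_{n,m}|=q^{nm}$) identifies it with $N\cdot q^{nm}$, where
$$
N:=\sharp\{(a_1,\dots,a_k)\in\mathcal{O}(r_1)\times\cdots\times\mathcal{O}(r_k)\mid \rk(a_1+\cdots+a_k)=r\}.
$$
It then remains to evaluate $N$ and check that $Nq^{nm}$ is the asserted right-hand side.

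For the count I would first dispose of the case $\sum_i r_i<r$: subadditivity of rank gives $\rk(\sum_i a_i)\le\sum_i \rk a_i=\sum_i r_i<r$, so no tuple contributes and $N=0$, matching $\delta_{\sum_i r_i,\,r}$. In the remaining case $\sum_i r_i=r$, the structural fact I would invoke is that $\rk(\sum_i a_i)=\sum_i\rk a_i$ holds if and only if the column spaces $\mathrm{Col}(a_1),\dots,\mathrm{Col}(a_k)\subseteq\mathbb{F}^n$ are in direct sum and simultaneously the row spaces $\mathrm{Row}(a_1),\dots,\mathrm{Row}(a_k)\subseteq\mathbb{F}^m$ are in direct sum. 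Granting this, the count factorizes: one chooses an ordered tuple of column spaces in direct sum, an ordered tuple of row spaces in direct sum, and for each $i$ a rank-$r_i$ matrix with the prescribed column and row space, the last contributing a factor $|\G_{r_i}(\mathbb{F})|$ independently for each $i$ (the number of matrices with fixed $r_i$-dimensional column and row space is $|\G_{r_i}(\mathbb{F})|$). Writing $A_n$ and $A_m$ for the numbers of ordered tuples $(V_1,\dots,V_k)$ of subspaces of $\mathbb{F}^n$, respectively $\mathbb{F}^m$, with $\dim V_i=r_i$ and in direct sum, this gives
$$
N=A_n\cdot A_m\cdot\prod_{i=1}^k|\G_{r_i}(\mathbb{F})|.
$$

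I would compute $A_n$ by counting ordered systems of $r$ linearly independent vectors, of which there are $\prod_{j=0}^{r-1}(q^n-q^j)$, and dividing by the $\prod_i|\G_{r_i}(\mathbb{F})|$ ways of re-choosing ordered bases of the individual $V_i$, so that $A_n=\prod_{j=0}^{r-1}(q^n-q^j)/\prod_i|\G_{r_i}(\mathbb{F})|$, and likewise for $A_m$. Substituting, the $\prod_i|\G_{r_i}(\mathbb{F})|$ factors combine to leave
$$
N=\frac{\prod_{j=0}^{r-1}(q^n-q^j)\;\prod_{j=0}^{r-1}(q^m-q^j)}{\prod_{i=1}^k|\G_{r_i}(\mathbb{F})|}.
$$
The final step is formal rewriting: using $\prod_{j=0}^{r-1}(q^n-q^j)=(-1)^r q^{\binom{r}{2}}(q^n;q^{-1})_r$ (and the same with $m$), together with $|\G_{r_i}(\mathbb{F})|=(-1)^{r_i}q^{\binom{r_i}{2}}(q;q)_{r_i}$, and the identity $\binom{r}{2}=\sum_i\binom{r_i}{2}+\sum_{i<j}r_ir_j$ to collect the exponent of $q$, one obtains
$$
Nq^{nm}=(-1)^r q^{\,nm+\binom{r}{2}+\sum_{i<j}r_ir_j}\,\frac{(q^m;q^{-1})_r\,(q^n;q^{-1})_r}{\prod_i(q;q)_{r_i}},
$$
the signs cancelling to $(-1)^r$ as they must since $N>0$.

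The main obstacle is the structural characterization used in the equality case — the equivalence between rank-additivity of a sum of matrices and the simultaneous directness of the families of column and row spaces — together with the bookkeeping that turns it into the clean product $A_n A_m\prod_i|\G_{r_i}(\mathbb{F})|$ without over- or under-counting. Once that is established, the evaluation is a routine $q$-Pochhammer manipulation entirely analogous to Prop.\ref{diag-cho}.
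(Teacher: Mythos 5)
Your proposal is correct, and it follows the paper's overall skeleton --- identify the weight as the orbit size $|\mathcal{O}_{n,m}(s)|$ via (\ref{mat-kidou}), note that all $\varPhi_{n,m}(s,r)$ are real (since $\mathcal{O}=-\mathcal{O}$ for every orbit), invoke Prop.~\ref{taju}, kill the case $\sum_i r_i<r$ by rank subadditivity, and reduce everything to the count $N$ --- but you evaluate $N$ in the equality case by a genuinely different device. The paper proceeds sequentially: Lemma~\ref{number1} counts, for fixed $a$ of rank $l$, the matrices $b$ of rank $k$ with $\rk(a+b)=l+k$ as $q^{2kl}|\mathcal{O}_{n-l,m-l}(k)|$, and then builds the tuple $(a_1,\dots,a_k)$ one matrix at a time (using, implicitly, that rank-additivity of the full sum forces rank-additivity of every partial sum --- which the subadditivity argument you give for the $<$ case supplies), arriving at a telescoping product of orbit sizes that is simplified with (\ref{mat-kidou}). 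You instead use the global characterization that $\rk(a_1+\cdots+a_k)=\sum_i \rk a_i$ if and only if the column spaces and the row spaces are simultaneously in direct sum, and factor the count as $A_nA_m\prod_i|\G_{r_i}(\mathbb{F})|$. Your one unproven ingredient is true and takes only a few lines, so it is no real gap: necessity follows from $\mathrm{Col}(\sum_i a_i)\subseteq \sum_i\mathrm{Col}(a_i)$ and a dimension count (rows via transpose); for sufficiency, directness of the column spaces gives $\ker(\sum_i a_i)=\bigcap_i\ker(a_i)=\bigl(\sum_i\mathrm{Row}(a_i)\bigr)^{\perp}$, which by directness of the row spaces and nondegeneracy of the standard form has dimension $m-\sum_i r_i$. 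Your auxiliary counts are also right: there are exactly $|\G_{r}(\mathbb{F})|$ matrices with prescribed $r$-dimensional column and row spaces (write $a=CMR$ with $M\in\G_r$), and the basis-counting evaluation of $A_n$ is sound. As for what each route buys: the paper's argument recycles the already-derived orbit sizes and matches the recursive spirit of the rest of the paper, while yours is self-contained linear algebra over $\mathbb{F}$ that makes the $n\leftrightarrow m$ symmetry and the $q$-multinomial shape of the answer transparent and avoids the block-matrix verification hidden in Lemma~\ref{number1}; the final $q$-Pochhammer bookkeeping, including $\binom{r}{2}=\sum_i\binom{r_i}{2}+\sum_{i<j}r_ir_j$, checks out and reproduces the stated right-hand side exactly.
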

\begin{proof}
We count the number $N:=\sharp \{ (a_1,\cdots,a_k) \in \mathcal{O}({r_1}) \times \cdots \times \mathcal{O}({r_k}) \mid a_1+ \cdots +a_k \in \mathcal{O}(r) \}.$
When $\sum r_i < r$, obviously $N=0$.
For counting the case $\sum r_i = r$, we use a lemma following:
\begin{lem}
 \label{number1}
Let $0 \leq l,\!k \leq n \leq m$ and $a \in \M_{n,m},\; \rk a=l$. Then
$$
\sharp\{ b \in \M_{n,m} \mid \rk b=k,\; \rk(a+b)=l+k \}=\;q^{2kl}|\mathcal{O}_{n-l,m-l}(k)|.
$$
\end{lem}
\noindent
In fact, it's enough to think when
$a=
\left(\!\!\!\!
\begin{array}{ccc}
 \begin{array}{ccc}
 1&&\\[-10pt]
 & \!\!\!\!\!\ddots&\\[-5pt]
 &&\!\!\!\!\!1
 \end{array}
&&\\[0pt]
&& 
\end{array}
\right)
 \in \M_{n,m}
$, where the number of $1$ is $l$
(For a general case, we can make an adequate correspondence with it).
Then for $b \in \mathcal{O}_{(n,m)}(k)$, the condition that $\rk(a+b)=l+k$ is equivalent to that the 
lower right side $(n-l) \times (m-l)$ sized matrix in $b$ has rank $k$.
It implies the lemma.

\vspace{0.1in}

Using the lemma, taking $a_1,\cdots, a_k$ in order such like $a_i \in \mathcal{O}({r_i}),\; a_1+\cdots +a_i \in \mathcal{O}({r_1+\cdots +r_i})\;(1 \leq i \leq k)$,
we have
$$
N=\; \prod_{i=1}^k q^{2(r_1+\cdots +r_{i-1})r_i}\,|\,\mathcal{O}_{n-r_1-\cdots -r_{i-1},\, m-r_1-\cdots -r_{i-1}}({r_i})\,|.
$$
Substituting (\ref{mat-kidou}) and calculating, we get $\displaystyle N=(-1)^r q^{\binom{r}{2} + \sum_{i<j}r_ir_j}
\frac{(q^m; q^{-1})_r \, (q^n; q^{-1})_r}
{\prod_{i}(q; q)_{r_i}}$.
Then by Prop.\ref{taju} we get the result.
\end{proof}

Again substituting Prop.\ref{mat-me} and setting $a=q^{-m}$, we get
\begin{col}
 \label{kraw-cho}
Let $k \geq 1$ and $0 \leq y_1,\cdots, y_k,y \leq N$ such that $\displaystyle \sum_{i} y_i \leq y$, then
$$
\sum_{x=0}^{N}\,a^{N-x} (a;q)_x
\begin{bmatrix} N \\ x \end{bmatrix} _q\,\prod_i K^{{\rm Aff}}_{y_i}(x;a,N;q) K^{{\rm Aff}}_y(x;a,N;q)
=\; \delta_{\sum_i y_i, y}\; 
q^{\sum_{i < j}y_iy_j}
\frac{a^y (q;q)_y}{\prod_{i}(a;q)_{y_i}(q^N; q^{-1})_{y_i}}.
$$
\end{col}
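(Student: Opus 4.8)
The plan is to obtain this identity from the combinatorial multi-orthogonality relation Prop.\ref{mat-cho} by inserting the closed evaluation of the canonical matrix, in exact parallel with the way the Krawtchouk case was derived from Prop.\ref{diag-cho}. Concretely, I would start from Prop.\ref{mat-cho} taken with $n=N$, substitute the closed form $\varPhi_{n,m}(s,r)=|\mathcal{O}_{n,m}(r)|\,K^{\Aff}_s(r;q^{-m},n;q)$ of Prop.\ref{mat-me} (equivalently (\ref{mat-kidou})) into every factor, and finally set $a=q^{-m}$. Since Prop.\ref{mat-cho} is valid for every prime power $q$ and every integer $m\ge n$, this first yields an identity at the discrete values $a=q^{-m}$, which I will afterwards promote to an identity of rational functions in $a$.

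For the algebra, the orbit sizes $|\mathcal{O}_{n,m}(r_i)|$ and $|\mathcal{O}_{n,m}(r)|$ do not depend on the summation index $s$, so they can be pulled out of the sum, leaving $\prod_i K^{\Aff}_s(r_i;\cdot)\,K^{\Aff}_s(r;\cdot)$ inside. To match the corollary, whose summation variable $x$ is the \emph{argument} of the polynomials rather than their degree, I would invoke the self-duality $K^{\Aff}_s(r;q^{-m},n;q)=K^{\Aff}_r(s;q^{-m},n;q)$; this is precisely the symmetry (\ref{me-taisho}) read through $\varPhi_{n,m}(s,r)=|\mathcal{O}(r)|\,K^{\Aff}_s(r;\cdot)$, and it interchanges the summation index $s$ with the fixed indices $r_i,r$, so that after relabelling $s\mapsto x$, $r_i\mapsto y_i$, $r\mapsto y$ the index $x$ becomes the argument and $y_i,y$ become the degrees. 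The surviving weight is handled by the elementary identity $(-1)^s q^{\binom{s}{2}}(q^m;q^{-1})_s=q^{ms}(q^{-m};q)_s=a^{-N}\,a^{N-s}(a;q)_s$, which converts the factor into $a^{N-x}(a;q)_x$ up to the $s$-independent constant $a^{-N}$. Collecting everything, the left-hand side of Prop.\ref{mat-cho} equals $\bigl(\prod_i|\mathcal{O}(y_i)|\bigr)|\mathcal{O}(y)|\,a^{-N}$ times the left-hand side of the corollary.

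It then remains to divide by that prefactor and simplify the right-hand side of Prop.\ref{mat-cho}, again rewriting each $|\mathcal{O}(r)|$ and each $(q^m;q^{-1})$-factor in its $a=q^{-m}$ form and using $\begin{bmatrix} n \\ r \end{bmatrix}_q=(q^n;q^{-1})_r/(q;q)_r$ from (\ref{gau}). I expect the main obstacle to be exactly this $q$-shifted-factorial bookkeeping: one must consistently translate between the conventions $(q^m;q^{-1})_r$ and $(q^{-m};q)_r=(a;q)_r$, cancel the matching $(-1)^r$, $q^{\binom{r}{2}}$ and $(q^m;q^{-1})_r$ against $|\mathcal{O}(y)|$, and check that the powers of $q$ and $a$ collapse to the stated $q^{\sum_{i<j}y_iy_j}$ and $a^{y}$ (in particular the large factor $q^{nm}$ on the right of Prop.\ref{mat-cho} cancels against the overall $a^{N}=q^{-nm}$), while the remaining $(q^N;q^{-1})$, $(q;q)$ and $(a;q)$ factors assemble into $\frac{(q;q)_y}{\prod_i (a;q)_{y_i}(q^N;q^{-1})_{y_i}}$. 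Finally, since both sides are rational functions of $a$ that coincide at the infinitely many points $a=q^{-m}$ ($m=N,N+1,\dots$), they agree as rational functions, which gives the identity for all $a\in\mathbb{C}-\{0,1\}$.
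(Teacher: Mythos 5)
Your proposal is correct and takes essentially the same route as the paper, whose proof is the single line ``Again substituting Prop.~\ref{mat-me} and setting $a=q^{-m}$'' together with the same rational-function extension in $a$ (valid since $m\geq n$ is arbitrary) that the paper used for the generating function (\ref{affkraw-bo}). The self-duality $K^{\Aff}_s(r;a,N;q)=K^{\Aff}_r(s;a,N;q)$, the conversion $(-1)^sq^{\binom{s}{2}}(q^m;q^{-1})_s=a^{N-s}(a;q)_s\cdot a^{-N}$, and the cancellation of $q^{nm}$ against $a^N$ that you spell out are exactly the bookkeeping the paper leaves implicit, and your computation of them is accurate.
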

\noindent
\underline{Remark}.\; When k=1, it becomes the ordinary orthogonality relation of Affine $q$-Krawtchouk polynomial \cite{Koe} as follows:
\begin{equation}\nonumber
\sum_{x=0}^{N}\,a^{N-x} (a;q)_x
\begin{bmatrix} N \\ x \end{bmatrix} _q\, K^{{\rm Aff}}_{y}(x;a,N;q) K^{{\rm Aff}}_{y'}(x;a,N;q)
=\; \delta_{y, y'}\; 
\frac{a^y}{(a;q)_{y}}\begin{bmatrix} N \\ y \end{bmatrix} _q^{-1}.
\end{equation}


\subsection{The pair $(\A_n, \G_n)$}
In this subsection, we assume $q$ is odd.
We consider the action $\rho$ of $\G_n$ on $\A_n$ of all $n \times n$-sized alternating (skew-symmetric) matrices such that
\begin{equation}
\label{act3}
\rho(g)a=\; ga\,{}^t\!g \qquad (g \in \G_n,\; a \in \A_n).
\end{equation}
Remark when regarding $\G_n$ as a subgroup of $\G_n \times \G_n$ by the diagonal embedding $g \mapsto (g,g)$, this action is a sub-action of (\ref{act2}).
First we assume a fact concerning rank of alternating matrices \cite{MW}:

\begin{fact}
\label{jijitu3}
\; The rank of any alternating matrix is even. And any two $n \times n$-sized alternating matrices of the same rank are transitive each other by the action $\rho$ above.
\end{fact}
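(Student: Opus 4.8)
The plan is to prove both assertions at once by establishing a congruence normal form for alternating matrices. Since $\rho(g)a=ga\,{}^t\!g$ is precisely the congruence action of $\G_n$ on $\A_n$, and $g\in\G_n$ preserves rank, it suffices to show that every $a\in\A_n$ lies in the same $\rho$-orbit as a block-diagonal matrix consisting of $k$ copies of the $2\times2$ block $J=\begin{pmatrix}0&1\\-1&0\end{pmatrix}$ followed by a zero block. Such a normal form makes $\rk a=2k$ manifestly even and shows that $k$ is determined by the rank alone; consequently two alternating matrices of equal rank are congruent to the same normal form, hence to each other, the orbit relation being an equivalence because $\G_n$ is a group.

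The normal form I would obtain by induction on $n$. If $a=0$ the rank is $0$ and there is nothing to prove. Otherwise, because $a$ is alternating its diagonal entries vanish, so some off-diagonal entry $a_{ij}\neq0$; conjugating by a permutation matrix $P$, which acts as the congruence $a\mapsto Pa\,{}^t\!P$, I may assume $a_{12}\neq0$, and then the diagonal scaling $g=\diag(a_{12}^{-1},1,\dots,1)\in\G_n$ normalizes the top-left corner to $J$. Writing the result as $\begin{pmatrix}J&B\\-{}^tB&C\end{pmatrix}$ with $B\in\M_{2,n-2}$ and $C\in\A_{n-2}$, I would then apply the shear $g=\begin{pmatrix}I_2&0\\X&I_{n-2}\end{pmatrix}$ with $X={}^tB\,J^{-1}$. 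Since $J$ is invertible, a direct check gives $g\begin{pmatrix}J&B\\-{}^tB&C\end{pmatrix}{}^t\!g=\begin{pmatrix}J&0\\0&C'\end{pmatrix}$ with $C'={}^tB\,J^{-1}B+C$, and $C'$ is again alternating. Applying the inductive hypothesis to $C'\in\A_{n-2}$ completes the reduction, exhibiting $a$ as congruent to $k$ copies of $J$ plus a zero block.

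The remaining step is purely formal: from the normal form, $\rk a=2k$ is even, which is the first assertion; and since the number of $J$-blocks equals $\tfrac12\rk a$, any two alternating matrices of the same rank are congruent to the identical normal form, so by transitivity they lie in a single $\rho$-orbit, which is the second assertion. I expect the only delicate point to be the bookkeeping in the shear step, namely checking that $g\in\G_n$, that both off-diagonal blocks are killed by the choice $X={}^tB\,J^{-1}$, and that $C'$ remains alternating; these are routine once that value of $X$ is in hand, so no genuinely hard obstacle arises.
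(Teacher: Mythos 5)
Your proof is correct. There is, in fact, no internal argument in the paper to compare it with: Fact 4.3.1 is quoted without proof from MacWilliams \cite{MW}, so what you have done is supply the standard symplectic normal-form proof that underlies the citation. The one delicate step checks out: with $g=\begin{pmatrix}I_2&0\\X&I_{n-2}\end{pmatrix}$ one computes
$g\begin{pmatrix}J&B\\-{}^tB&C\end{pmatrix}{}^t\!g
=\begin{pmatrix}J&J\,{}^tX+B\\XJ-{}^tB&(XJ-{}^tB)\,{}^tX+XB+C\end{pmatrix}$,
and the choice $X={}^tB\,J^{-1}$ kills the lower-left block, while ${}^tX={}^t(J^{-1})B=({}^tJ)^{-1}B=-J^{-1}B$ (using ${}^tJ=-J$) kills the upper-right block as well, leaving $C'={}^tB\,J^{-1}B+C$; this is alternating either because congruence preserves $\A_{n-2}$ or directly from ${}^t({}^tB\,J^{-1}B)=-{}^tB\,J^{-1}B$. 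One point you should make explicit rather than leave implicit: your opening claim that the diagonal entries of an alternating matrix vanish uses $\ch\mathbb{F}\neq 2$ (from $a_{ii}=-a_{ii}$), which is legitimate here because the paper assumes $q$ odd throughout this subsection; in characteristic $2$ the vanishing diagonal must be built into the definition of alternating, and with mere skew-symmetry the fact would be false there. Granting that standing hypothesis, the rest is airtight: $g\in\G_n$ since it is unit block-triangular, rank is a congruence invariant, the normal form with $k$ blocks $J$ visibly has rank $2k$, and two alternating matrices of equal rank are congruent to the same normal form, hence lie in a single $\rho$-orbit because the orbit relation is an equivalence.
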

\noindent
By the fact, orbits of this action are parametrized as follows:
\begin{equation}
_{\G_n}\! \backslash \A_n =\; \{\mathcal{O}(r) \mid 0 \leq r \leq n,\; r \;\text{is even}\}, \qquad \mathcal{O}(r) =\;\mathcal{O}_n(r) = \{a \in \A_n|\; \rk a = r \}.
\end{equation}
$\rho$ is adjoint-free corresponding to the form $\langle \; | \; \rangle$ in (\ref{form}).
Thus orbits of the character group $(\A_n)\,\hat{}\;$ are also parametrized by $\{s \mid 0 \leq s \leq n,\; s \;\text{is even}\}$ by the correspondence (\ref{taiou}).
Let $\mathcal{F}_n: \mathbb{C}[_{\G_{n}}\! \backslash \A_{n}] \rightarrow  \mathbb{C}[_{\G_{n}}\! \backslash (\A_{n})\,\hat{}\;]$ be the group-invariant Fourier transformation,
and $\varPhi_{n}=(\varPhi_{n}(s,r))$ be the canonical matrix of it.
Remark when $n=0$ and $n=1$, we can regard $\A_{n}=0$ and $\varPhi_{n}(0,0)=1$.
Now we see followings:

\begin{pp}
 \label{alt-me}
\; For $0 \leq x,\!y \leq N$,
\begin{eqnarray}\nonumber
\begin{split}
&\varPhi_{2N}(2y,2x)
=\; (-1)^x q^{x(x-1)} (q^{2N-1};q^{-2})_x \begin{bmatrix} N \\ x
\end{bmatrix} _{q^2} \, K^{\mathrm{Aff}}_y(x; q^{-2N+1}, N; q^2),\quad \text{and}\\
&\varPhi_{2N+1}(2y,2x)
=\; (-1)^x q^{x(x-1)} (q^{2N+1};q^{-2})_x \begin{bmatrix} N \\ x
\end{bmatrix} _{q^2} \, K^{\mathrm{Aff}}_y(x; q^{-2N-1}, N; q^2).
\end{split}
\end{eqnarray}
\end{pp}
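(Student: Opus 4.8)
The plan is to imitate the two preceding examples: I will relate $\mathcal F_n$ on $\A_n$ to $\mathcal F_{n-2}$ on $\A_{n-2}$ through the commutative diagram (\ref{d2}), read off a backward-shift Pascal recursion from Theorem \ref{main3} and a forward-shift Pascal recursion from Theorem \ref{main4}, and then invoke Proposition \ref{zenkasikitoku}. Because the rank of an alternating matrix is always even (Fact \ref{jijitu3}), the orbit parameters run through $r=2x$ and $s=2y$ with $0\le x,y\le N$, and the natural size reduction drops the dimension by two at a time. This is exactly what will force the base of the recursion to be $t=q^2$, producing $K^{\mathrm{Aff}}$-polynomials in base $q^2$.

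Concretely, I will write a matrix in $\A_n$ in block form
\[
\begin{pmatrix} b & Y \\ -{}^t\!Y & c_w\end{pmatrix},\qquad b\in\A_{n-2},\ Y\in\M_{n-2,2},\ c_w=\begin{pmatrix}0&w\\-w&0\end{pmatrix},
\]
and take $\pi:\A_n\to\A_{n-2}$ to be the projection onto the $b$-block, whose adjoint ${}^t\!\pi$ for the form (\ref{form}) is the embedding $b\mapsto\bigl(\begin{smallmatrix}b&0\\0&0\end{smallmatrix}\bigr)$. For the intersection character I will take $\theta_e$ with $e=\bigl(\begin{smallmatrix}0&0\\0&c_1\end{smallmatrix}\bigr)$, the standard rank-two block in the lower-right corner; one checks $\langle e\mid a\rangle=2w$, so oddness of $q$ makes $\theta_e$ non-trivial on $\ker\pi$. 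Since ${}^t\!\pi(b)+e=\bigl(\begin{smallmatrix}b&0\\0&c_1\end{smallmatrix}\bigr)$ has rank $\rk b+2$, the condition $(\natural 3')$ holds and the induced map on parameters is $\tilde v=v+1$ in $x$-units, so $\Delta=(\delta_{v+1,s})$ exactly as in (\ref{matDelta}).

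The crux is the pushforward matrix $E=(E(u,r))$, and I expect this rank-plus-character-sum computation to be the main obstacle. Since bordering $b$ can only raise its rank by $0$ or $2$ (evenness again), $E(u,r)=0$ unless $r\in\{u,u+1\}$. To get $E(u,u)$ I will take $b=J_u\oplus 0$ with $J_u\in\A_{2u}$ nondegenerate, take the Schur complement of the invertible block $J_u$ to see that the full matrix has rank $2u$ exactly when the part of $Y$ outside the $J_u$-rows vanishes and $w$ is pinned to the value fixed by ${}^t\!Y_1 J_u^{-1}Y_1$, and then evaluate the residual sum
\[
E(u,u)=\sum_{Y_1\in\M_{2u,2}}\overline{\theta}\bigl(2\,{}^t\!y^{(1)}J_u^{-1}y^{(2)}\bigr)=q^{2u},
\]
writing $Y_1=(y^{(1)}\mid y^{(2)})$ for its two columns; the value follows from nondegeneracy of $J_u^{-1}$, as the inner sum over $y^{(2)}$ is $q^{2u}$ when $y^{(1)}=0$ and $0$ otherwise. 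Then $E(u,u+1)=-q^{2u}$ is immediate from Proposition \ref{Erow}. Everything after this is bookkeeping.

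With $E(u,r)=q^{2u}$ for $r=u$, $-q^{2u}$ for $r=u+1$ and $0$ otherwise, Theorem \ref{main3} gives
\[
\varPhi_n(2(v+1),2r)=q^{2r}\varPhi_{n-2}(2v,2r)-q^{2(r-1)}\varPhi_{n-2}(2v,2(r-1)),
\]
which is BPR (\ref{koutai}) with $a=b=1$, $t=q^2$, while Theorem \ref{main4}(2) together with $|\A_n|/|\A_{n-2}|=q^{2n-3}$ (from $\binom n2-\binom{n-2}2=2n-3$) gives
\[
\varPhi_n(2(v+1),2\mu)-\varPhi_n(2v,2\mu)=-q^{2n-3-2v}\varPhi_{n-2}(2v,2(\mu-1)),
\]
which is FPR (\ref{zensin}) with $c=1$, $t=q^2$ and $d=q^{2n-1-4N}$. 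Setting $f_N(y,x)=\varPhi_{2N}(2y,2x)$ gives $d=q^{-1}$ and $\sigma=\varPhi_0(0,0)=1$, whereas $f_N(y,x)=\varPhi_{2N+1}(2y,2x)$ gives $d=q$ and $\sigma=\varPhi_1(0,0)=1$. Applying Proposition \ref{zenkasikitoku}(3) in each case and simplifying by $2\binom{x}{2}=x(x-1)$ produces $O_N(x)=(-1)^xq^{x(x-1)}(q^{2N\mp 1};q^{-2})_x\begin{bmatrix}N\\x\end{bmatrix}_{q^2}$ and the affine parameter $q^{-2N\pm 1}$, which are precisely the two stated formulas.
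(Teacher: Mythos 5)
Your overall route is exactly the paper's: the same projection $\pi:\A_n\to\A_{n-2}$, the same intersection character $\theta_e$ with the symplectic block $\bigl(\begin{smallmatrix}0&1\\-1&0\end{smallmatrix}\bigr)$, the same $\Delta=(\delta_{v+1,s})$ in half-rank units, the same recursion constants ($a=b=c=1$, $t=q^2$, $d=q^{-1}$ resp.\ $d=q$, $\sigma=1$), and the same invocation of Proposition \ref{zenkasikitoku}(3); your Schur-complement evaluation of $E(u,u)=q^{2u}$ via the nondegenerate pairing ${}^t\!y^{(1)}J_u^{-1}y^{(2)}$ is a clean, correct variant of the paper's explicit coordinate computation. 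However, there is one genuine gap: your claim that bordering $b$ by two rows and columns ``can only raise its rank by $0$ or $2$ (evenness again)'', hence $E(u,r)=0$ unless $r\in\{u,u+1\}$, is false. Evenness only forces the jump to be $0$, $2$, or $4$: for instance with $b=0$ and $Y$ of rank $2$, the matrix $\bigl(\begin{smallmatrix}0&Y\\-{}^t\!Y&c_w\end{smallmatrix}\bigr)$ has rank $4$. So a priori $E(u,u+2)$ (in your half-rank units) need not vanish, and your appeal to Proposition \ref{Erow} then only yields $E(u,u+1)+E(u,u+2)=-q^{2u}$, which would leave an extra unknown term in both the BPR and FPR recursions and block the application of Proposition \ref{zenkasikitoku}.

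The missing step is handled in the paper by a short character-sum observation, which fits naturally into your own Schur-complement setup: writing $Y=\binom{Y_1}{Y_2}$ with $Y_2$ the block outside the support of $b$, the rank jumps by $4$ precisely when $\operatorname{rank} Y_2=2$, a condition that does not involve $w$; hence every such term in $\pi_*\chi$ carries the factor $\sum_{w\in\mathbb{F}}\overline{\theta}(2w)=0$, giving $E(u,u+2)=0$. With this one paragraph inserted, the rest of your proposal (the identification of the recursions, the ratio $|\A_n|/|\A_{n-2}|=q^{2n-3}$, and the final simplification $2\binom{x}{2}=x(x-1)$) is correct and reproduces the stated formulas.
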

We  calculate it below.
Let $n \geq 2$.
We use a notation
\begin{equation}
 \label{notation2}
 (b||\, z, -^t\!z, w):=\; \left(\begin{array}{cc}\\[-10pt]
b& z \\[0pt]
-^t\!z& w  \end{array}\right)
\in \A_{n} \qquad (b \in \A_{n-2},\; z \in \M_{n-2, 2},\;w \in \A_2).
\end{equation}
Let $\pi:\A_{n} \rightarrow \A_{n-2}$ be a projection map $(b||\, z, -^t\!z,w) \mapsto b$.
Take an intersection character $\theta_e$ corresponding to $ \displaystyle e=(0||\,0,0,\; \left(\begin{array}{cc}\\[-10pt]
0& 1 \\[0pt]
-1& 0  \end{array}\right)) \in \A_{n}$ and define $\pi_*=\pi_*^e$ and $\hat{\pi}=\hat{\pi}_e$.
Then for $b \in \A_{n-2}$, the rank of ${}^t\!\pi(b) +e$ is bigger by 2 than that of $b$.
So , we get $\tilde{v}=\hat{\pi}(v)=v+2$ as we see $\hat{\pi}:\{v \mid 0 \leq v \leq n-2,\; \text{even}\} \to \{s \mid 0 \leq s \leq n,\; \text{even}\}$ by (\ref{hatpi-para}), the matrix of $\hat{\pi}^*$ on canonical bases such that
\begin{equation}
 \label{altDelta}
\Delta=(\delta_{v+2,s})_{0\leq v \leq n-2,\; 0 \leq s \leq n,\; \text{both even}},
\end{equation}
and following diagram:
\begin{equation}
 \label{alt-d}
\begin{CD}
\mathbb{C}[_{\G_{n}}\!\backslash \A_{n}] @>{\mathcal{F}_{n}}>>\mathbb{C}[_{\G_{n}}\!\backslash (\A_{n})\,\hat{}\;] \\
@V{\pi_{*}}V{\qquad\qquad\quad\;\;\circlearrowright}V  @VV{\hat{\pi}^*}V \\
\mathbb{C}[_{\G_{n-2}}\!\backslash \A_{n-2}] @>>{\mathcal{F}_{n-2}}> \mathbb{C}[_{\G_{n-2}}\!\backslash (\A_{n-2})\,\hat{}\;]
\end{CD}
\end{equation}
Let $E=(E(u,r))_{u,r}$ be the matrix of $\pi_*$ on canonical bases.
We have
\begin{equation}
 \label{altE}
E(u,r)=\; \begin{cases} q^u & (r=u)\\ -q^u & (r=u+2) \\ 0 & (otherwise) \end{cases}
\qquad\quad (0\leq u \leq n-2,\;0 \leq r \leq n,\; \text{both even}).
\end{equation}
It can be calculated as (\ref{matE}) in the example of $\M_n$. 
Let's use
$\displaystyle
b=
\left(\!\!\!\!
\begin{array}{cc}&\\[-10pt]
\begin{array}{ccccc}
0&\!\!\!\!\!1&&&\\[-3pt]
-1&\!\!\!\!\!0&&&\\[-5pt]
&& \!\!\!\!\ddots&&\\[-5pt]
&&&\!\!\!\!\!0&\!\!\!\!\!1\\[-3pt]
&&&\!\!\!\!-1&\!\!\!\!0
\end{array}
&\\
&\!\!\!\!\mbox{\smash{\LARGE{0}}}
\end{array}
\right)
\in \mathcal{O}_{n-2}(u)
$ as a representative of the orbit.
Then,
$\displaystyle E(u,r)= \pi_*\chi_r(b)
= \sum_{z \in \M_{n-2,2}} \sum_{w \in \mathbb{F}} \overline{\theta(2w)}\chi_r\left(b||\, z,\,-^t\!z,\; \left(\begin{array}{cc}\\[-10pt]
0& w \\[0pt]
-w& 0  \end{array}\right)\right)$.
So we find $E(u,r)=0$ for $r \neq u,\,u+2,\;u+4$ at once.
And since the condition $\rk\left(b||\, z,\,-^t\!z,\; \left(\begin{array}{cc}\\[-10pt]
0& w \\[0pt]
-w& 0  \end{array}\right)\right)=u+4$ doesn't depend on $w \in \mathbb{F}$, we have $E(u,u+4)=0$.
Next when $r=u$, since 
\\
$\displaystyle \rk\left(b||\, (z',0), -\,^t\!(z',0),\; \left(\begin{array}{cc}\\[-10pt]
0& w \\[0pt] -w& 0  \end{array}\right)\right)
=\; \rk \left(b|| \, 0,0,\; w-\sum_{i:\odd} (z_{i,1}z_{i+1,2}-z_{i,2}z_{i+1,1})\left(\begin{array}{cc}\\[-10pt]
0& 1 \\[0pt] -1& 0  \end{array}\right) \right)$ where $z'=(z_{i,j}) \in \M_{u,2}$,
we have
$\displaystyle
E(u,u)= \sum_{z' \in \M_{u,2}} \overline{\theta}\left(2\sum_{i:\odd} (z_{i,1}z_{i+1,2}-z_{i,2}z_{i+1,1})\right)
=\; \left( \sum_{x \in \mathbb{F}}\sum_{x' \in \mathbb{F}} \overline{\theta}(2xx') \right)^u 
\\
= q^u.
$
At the end $E(u,u+2)=-q^u$ by Prop.\ref{Erow}, and we got (\ref{altE}).

Now from the diagram (\ref{alt-d}) (i.e. from Thm.\ref{main3}, \ref{main4}), we get
\begin{equation}
 \label{alt-koutai}
\varPhi_{n}(v+2, r)=\; q^r\varPhi_{n-2}(v,r)-q^{r-2}\varPhi_{n-2}(v,r-2)\qquad (0 \leq v \leq n-2,\; 0\leq r \leq n,\; \text{both enen})
\end{equation}
\begin{equation}
 \label{alt-zensin}
\text{and}\qquad \varPhi_{n}(u+2,s)-\varPhi_{n}(u,s)=\; -q^{2n-u-3} \varPhi_{n-2}(u,s-2) \qquad (0 \leq u \leq n-2,\; 0\leq s \leq n,\; \text{both enen}),
\end{equation}
where interpret $\varPhi_{n-2}(v,-2)=\varPhi_{n-2}(v,n)=0$.
Now we separate the family $\{\varPhi_n\}_{n \geq 0}$ of canonical matrices according to whether $n$ is even or odd. That is, we define two families $\{f_{N}\}_{N \geq 0}$ and $\{g_{N}\}_{N \geq 0}$ by
\begin{equation}
f_{N}(y,x)= \varPhi_{2N}(2y,2x),\quad g_N(y,x)= \varPhi_{2N+1}(2y,2x)\qquad  (0 \leq x,\!y \leq N).
\end{equation}
Then by (\ref{alt-koutai}) and (\ref{alt-zensin}),
$\{f_N\}$ satisfies BPR(\ref{koutai}) and FPR(\ref{zensin}) of the case $a=b=c=1,\; d=q^{-1},\; t=q^2$ and $\sigma=\varPhi_{0}(0,0)=1$.
And so is $\{g_N\}$ of the case $a=b=c=1,\; d=q,\; t=q^2$ and $\sigma=\varPhi_{1}(0,0)=1$.
So we conclude by Prop.\ref{zenkasikitoku}(3),
\begin{equation}
\label{alt-kidou}
|\mathcal{O}_n(2x)|
=\;(-1)^x q^{x(x-1)} \frac{(q^n;q^{-1})_{2x}}{(q^2;q^2)_x}\qquad(0 \leq 2x \leq n),
\end{equation}
\begin{eqnarray}
\begin{split}
&\varPhi_{2N}(2y,2x)
=\; |\mathcal{O}_{2N}(2x)| \, K^{\mathrm{Aff}}_y(x; q^{-2N+1}, N; q^2), \quad \text{and}\\
&\varPhi_{2N+1}(2y,2x)
=\; |\mathcal{O}_{2N+1}(2x)| \, K^{\Aff}_y(x; q^{-2N-1}, N; q^2),
\end{split}
\end{eqnarray}
which proved Prop.\ref{alt-me}.
\vspace{0.1in}

Similarly to Cor.\ref{seisitu1}, \ref{seisitu2}, we get again properties of $\varPhi_{n}(s,r)$ as follows:
\begin{col}
\label{seisitu3}
{\rm (1)}\quad $\varPhi_{n}(s,r)$ is a polynomial in $q$ over integers.

{\rm (2)}\quad As $q \rightarrow 1$, if $x \leq y$ then
$\displaystyle \varPhi_n(2y,2x) \rightarrow (-1)^x \binom{y}{x}$,
otherwise $\displaystyle \varPhi_n(2y,2x) \rightarrow 0$.
\end{col}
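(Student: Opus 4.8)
The plan is to mirror the proofs of Corollaries \ref{seisitu1} and \ref{seisitu2}, exploiting that the two families $f_N(y,x)=\varPhi_{2N}(2y,2x)$ and $g_N(y,x)=\varPhi_{2N+1}(2y,2x)$ satisfy the backward recursion BPR (\ref{alt-koutai}) and the forward recursion FPR (\ref{alt-zensin}), which are precisely the specializations already used to prove Prop.\ref{alt-me}. The observation driving both parts is that, once polynomiality in $q$ is established, these recursions hold as polynomial identities in $q$: they are valid for every prime power $q$, hence for infinitely many values, hence for all $q \in \mathbb{C}$, so we may take the limit $q \to 1$ termwise.

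For part (1), I would induct on $n$ using only BPR (\ref{alt-koutai}) together with the seed values on the first row. The base cases $n=0,1$ are trivial since $\varPhi_n(0,0)=1$. For the inductive step, the first row is $\varPhi_n(0,2x)=|\mathcal{O}_n(2x)|$, and by (\ref{alt-kidou}) this orbit size equals $(-1)^x q^{x(x-1)}(q^n;q^{-1})_{2x}/(q^2;q^2)_x$; since $2x \le n$, every factor $1-q^{n-i}$ has positive exponent, and the quotient is the familiar count of alternating matrices of fixed rank, hence a polynomial in $q$ with integer coefficients. For the remaining rows, BPR expresses $\varPhi_n(v+2,2x)$ as $q^{2x}\varPhi_{n-2}(v,2x)-q^{2x-2}\varPhi_{n-2}(v,2x-2)$; the coefficient $q^{2x-2}$ is a genuine nonnegative power whenever the entry it multiplies is nonzero (for $x=0$ that entry vanishes by the boundary convention $\varPhi_{n-2}(v,-2)=0$), so integer-coefficient polynomiality is inherited from the $\varPhi_{n-2}$ entries supplied by the induction hypothesis.

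For part (2), I would combine part (1) with Prop.\ref{zenkasikitoku}(1). As recorded just before (\ref{alt-kidou}), $\{f_N\}$ and $\{g_N\}$ satisfy BPR and FPR with parameters $(a,b,c,d,t)=(1,1,1,q^{-1},q^2)$ and $(1,1,1,q,q^2)$ respectively. By part (1) every entry is a polynomial in $q$, so the limits $\tilde f_N(y,x):=\lim_{q\to1}f_N(y,x)$ and $\tilde g_N(y,x):=\lim_{q\to1}g_N(y,x)$ exist; letting $q\to1$ in the (now polynomial) recursions shows that both limiting families satisfy BPR and FPR with $a=b=c=d=1$, $t=1$, and $\sigma=1$. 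This is exactly the setting of Prop.\ref{zenkasikitoku}(1), whose formula (ii) yields $\tilde f_N(y,x)=\tilde g_N(y,x)=(-1)^x\binom{y}{x}$ for $x\le y$ and $0$ for $x>y$. Since $n=2N$ or $n=2N+1$ exhausts both parities, this is precisely the asserted limit of $\varPhi_n(2y,2x)$.

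The main obstacle I anticipate is the bookkeeping in part (1): confirming that the seed $|\mathcal{O}_n(2x)|$ in (\ref{alt-kidou}) is an honest integer-coefficient polynomial rather than merely a rational function taking integer values at prime powers, and checking that the negative-looking power $q^{2x-2}$ in BPR is always either a nonnegative power or attached to a vanishing entry. Once these points are secured, part (2) follows immediately from Prop.\ref{zenkasikitoku}(1), exactly as in the previous two corollaries.
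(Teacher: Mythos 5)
Your proposal is correct and follows essentially the same route as the paper, which proves Corollary \ref{seisitu3} exactly as Corollaries \ref{seisitu1} and \ref{seisitu2}: part (1) by induction on $n$ via BPR (\ref{alt-koutai}) with the seed row (\ref{alt-kidou}) (where polynomiality of the seed is cleanest from the factored form $(-1)^x q^{x(x-1)}(q^{2N\mp1};q^{-2})_x \begin{bmatrix} N \\ x \end{bmatrix}_{q^2}$ in Prop.\ref{alt-me} rather than from integrality of the count), and part (2) by observing that the $q\to1$ limits of $f_N(y,x)=\varPhi_{2N}(2y,2x)$ and $g_N(y,x)=\varPhi_{2N+1}(2y,2x)$ satisfy BPR and FPR with $a=b=c=d=1$, $t=1$, $\sigma=1$ and invoking Prop.\ref{zenkasikitoku}(1). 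Your parameter identifications $(a,b,c,d,t)=(1,1,1,q^{-1},q^2)$ and $(1,1,1,q,q^2)$ match the paper's, and your handling of the boundary term $q^{2x-2}\varPhi_{n-2}(v,-2)=0$ is sound.
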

\noindent
So it also gives a $q$-analogue of the reversible formula in Prop.\ref{mori}.


\section{Group-invariant Fourier transformations on Symmetric matrices over a finite field}

Throughout this section also $\mathbb{F}$ is a finite field, and the order $q$ is odd.
Here we work with a vector space $\S_n$ of $n \times n$-sized symmetric matrices over $\mathbb{F}$.
We consider two group actions on $\S_n$.
The first one is $\G_n$-action $\rho$ as follows:
\begin{equation}
 \label{s-act1}
\rho(g)a=\; ga\,{}^t\!g \qquad (\,g \in \G_n,\;a \in \S_n).
\end{equation}
The second one is $\mathbb{F}^\times \times \G_n$-action $\rho$ as follows:
\begin{equation}
 \label{s-act2}
\rho(c, g)a=\; cga\,{}^t\!g \qquad (\,c \in \mathbb{F}^\times,\; g \in \G_n,\;a \in \S_n).
\end{equation}
We consider group-invariant Fourier transformations
\begin{eqnarray}
 \label{s-ft1}
&&\mathcal{F}=\mathcal{F}_n:\mathbb{C}[\;_{\G_n} \!\backslash \S_n] \rightarrow \mathbb{C}[\;_{\G_n} \!\backslash (\S_n)\,\hat{}\,] \qquad \text{and}\\
 \label{s-ft2}
&&\mathcal{F}=\mathcal{F}_n:\mathbb{C}[\;_{\mathbb{F}^\times \times \G_n} \!\backslash \S_n] \rightarrow \mathbb{C}[\;_{\mathbb{F}^\times \times \G_n} \!\backslash (\S_n)\,\hat{}\,]
\end{eqnarray}
concerning each actions.
Remark that (\ref{s-act1}) is regarded as a sub-action of (\ref{s-act2}), and (\ref{s-ft2}) is regarded as a restriction of (\ref{s-ft1}).
Here we solve matrix elements of $\mathcal{F}_n$ under an adequate base change from canonical bases.

Firstly we prepare some notations used in this section.
Define sign of $a \in \mathbb{F} \;(a \neq 0)$ by
\begin{equation}
 \label{sgn1}
\sgn(a)=\begin{cases} 1 &  \text{if $a$ is a square element,}\\
 -1 &  \text{if $a$ is non-square element}. 
\end{cases}
\end{equation}
Fix a non-trivial additive character $\theta \in \hat{\mathbb{F}}$.
And define a constant $\gamma$ by
\begin{equation}
 \label{gamma}
\gamma =\; \sum_{a \in \mathbb{F}-\{0\}} \sgn(a) \overline{\theta(a)},
\end{equation}
which is called {\bf Gauss sum} corresponding to $\theta$. 
Fix a non-square element $\delta \in \mathbb{F}$.
Let
\begin{equation}
  \epsilon=\;  \begin{cases}
    1 & \mathrm{if} \quad q \equiv 1(\rm{mod}\; 4), \\
    -1 & \mathrm{if} \quad q \equiv 3(\rm{mod}\; 4).
  \end{cases}
\end{equation}
Remark $\epsilon = \sgn(-1)$. In fact, if $q$ is a prime, it is a famous statement of the quadratic residue.
It is not difficult to extend to general prime powers $q$ (We omit the details).

Let {\rm diag}$(a_1,\dots,a_n)$ denote $n \times n$-sized diagonal matrix with diagonal elements $a_1,\dots,a_n \in \mathbb{F}$.


\subsection{Overview, canonical bases and changed bases}
\label{5-1}

As for orbits of the $\G_n$-action,
the following fact is known\cite{MW}:
\begin{fact}
\label{s-kidou}
\; Any non-zero symmetric matrix is transitive to only one of diagonal matrices \\
$\diag(1, \dots 1,0, \dots ,0)$ and 
$\diag(1,\dots,1,\delta,0,\dots,0)$ by $\G_n$-action \rm{(\ref{s-act1})}.
\end{fact}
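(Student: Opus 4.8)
The plan is to prove this via the classical theory of symmetric bilinear forms over a field of odd characteristic, establishing both \emph{existence} (every non-zero symmetric matrix is congruent to one of the two listed diagonal forms) and \emph{uniqueness} (the two families are never congruent to one another). Throughout I regard $a \in \S_n$ as the Gram matrix of a symmetric bilinear form and note that $\rho(g)a = ga\,{}^t\!g$ is precisely the congruence action: replacing $g$ by ${}^t\!g$ rewrites it as the more familiar ${}^t\!g\,a\,g$, and since $g \mapsto {}^t\!g$ is a bijection of $\G_n$ the two give the same orbits.

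For existence I would proceed in four steps. \textbf{First}, diagonalize: over a field of characteristic $\neq 2$ any symmetric matrix is congruent to a diagonal one, proved by induction on $n$ by choosing $v$ with $\langle v | v\rangle \neq 0$ (possible whenever $a \neq 0$, since if $\langle v|v\rangle = 0$ for all $v$ then polarization forces $a = 0$) and splitting off $\mathbb{F}v \oplus v^{\perp}$. \textbf{Second}, normalize entries: conjugating by $g = \diag(c_1,\dots,c_n)$ sends $\diag(d_1,\dots,d_n)$ to $\diag(c_1^2 d_1,\dots,c_n^2 d_n)$, and as $\mathbb{F}^\times/(\mathbb{F}^\times)^2$ has order two, each non-zero $d_i$ becomes $1$ or $\delta$; a permutation matrix then sorts them so that all $1$'s precede all $\delta$'s precede all $0$'s. \textbf{Third}, absorb pairs of $\delta$'s by showing $\diag(\delta,\delta) \sim \diag(1,1)$, which reduces the number of $\delta$'s modulo $2$ to leave either none or exactly one, yielding $\diag(1,\dots,1,0,\dots,0)$ or $\diag(1,\dots,1,\delta,0,\dots,0)$.

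The crucial input for this third step, and what I expect to be the main obstacle, is the representation lemma: a non-degenerate binary form $d_1 x^2 + d_2 y^2$ over $\mathbb{F}$ (with $q$ odd) represents every element of $\mathbb{F}$. This follows from a pigeonhole count, since the sets $\{d_1 x^2 : x \in \mathbb{F}\}$ and $\{c - d_2 y^2 : y \in \mathbb{F}\}$ each have $(q+1)/2$ elements inside a set of size $q$ and so must meet. Granting this, to see $\diag(\delta,\delta) \sim \diag(1,1)$ I would first solve $\delta x^2 + \delta y^2 = 1$ to produce a vector $v_1$ of form-value $1$; its orthogonal complement is again non-degenerate, so the remaining diagonal entry is some non-zero $d$, and because $\det(g\,a\,{}^t\!g) = (\det g)^2 \det a$ alters the determinant only by a square, $d \equiv \delta^2 \equiv 1$ modulo squares, hence $d$ is a square and can be scaled to $1$.

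For uniqueness I would exhibit two congruence invariants separating the forms. The rank is obviously preserved. The second invariant is the discriminant of the non-degenerate part: restricting $a$ to a complement of its radical gives a non-degenerate form whose determinant, read modulo squares in $\mathbb{F}^\times/(\mathbb{F}^\times)^2 = \{1,\delta\}$, is well defined and invariant by the same identity $\det(g\,a\,{}^t\!g) = (\det g)^2\det a$. The form $\diag(1,\dots,1,0,\dots,0)$ has discriminant $1$, while $\diag(1,\dots,1,\delta,0,\dots,0)$ has discriminant $\delta$; being distinct classes, the two forms cannot share an orbit. Combining existence with uniqueness gives the claim, and I note that the only genuinely finite-field-specific ingredient is the representation lemma, everything else being formal linear algebra over a field of characteristic not $2$.
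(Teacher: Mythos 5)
Your proof is correct, but note that the paper itself offers no proof of this statement at all: it is quoted as a known Fact with a citation to MacWilliams \cite{MW}, exactly as Fact \ref{jijitu3} for alternating matrices is. So there is no internal argument to compare against; what you have done is supply the missing classical classification of symmetric bilinear forms over $\mathbb{F}$ with $q$ odd. Checking your steps: the polarization argument for finding an anisotropic vector needs $\ch \mathbb{F} \neq 2$, which the paper guarantees at the start of section 5; the pigeonhole count for the representation lemma is right, since $\{d_1x^2 : x \in \mathbb{F}\}$ and $\{c - d_2y^2 : y \in \mathbb{F}\}$ each have $(q+1)/2$ elements; the reduction $\diag(\delta,\delta) \sim \diag(1,1)$ via representing $1$ and the determinant-mod-squares argument is sound; and for uniqueness your two invariants (rank, and discriminant of the form induced on the quotient by the radical --- you should say ``quotient by the radical'' rather than ``complement of the radical'' to make well-definedness immediate, though the two formulations agree) do separate the two normal forms, since $1$ and $\delta$ lie in distinct classes of $\mathbb{F}^\times/(\mathbb{F}^\times)^2$, which has order $2$ precisely because $q$ is odd. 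Your opening remark that $\rho(g)a = ga\,{}^t\!g$ has the same orbits as the congruence action ${}^t\!g\,a\,g$, because $g \mapsto {}^t\!g$ is a bijection of $\G_n$, is a necessary point of hygiene and is handled correctly. In short: the paper buys brevity by outsourcing this to \cite{MW}; your argument buys self-containedness at the cost of about a page, with the pigeonhole representation lemma correctly identified as the only genuinely finite-field ingredient.
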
 
\noindent
By the fact, we can define sign of $a \in \S_n\;(a \neq 0)$ by
\begin{equation}
 \label{sgn}
\sgn(a)=\begin{cases} 1 &  \text{if $a$ is transitive to $\diag(1, \dots 1,0, \dots ,0)$}
\\ -1 &  \text{if $a$ is transitive to $\diag(1, \dots 1,\delta,0, \dots ,0)$}. 
\end{cases}
\end{equation}
It is consistent with (\ref{sgn1}) for $n=1$.
For convenience, let $\sgn(0)=1$.
We can easily verify properties
\begin{equation}
 \label{sgn-seki}
\sgn\left(\begin{array}{cc}\\[-12pt]
a& 0 \\[0pt]
0& b  \end{array}\right)=\; \sgn(a)\cdot\sgn(b) \qquad(a \in \S_k,\; b \in \S_{l},\; k,l \geq 1),
\end{equation}
\begin{equation}
 \label{del-sgn}
\sgn(\delta a)=\begin{cases} \sgn(a) & \text{if $\rk(a)$ is even (including $0$)}\\
-\sgn(a) & \text{if $\rk(a)$ is odd} \end{cases}
\qquad (a \in \S_n).
\end{equation}
Then $\G_n$-orbits of $\S_n$ are as follows:
The zero matrix forms an orbit $\mathcal{O}(0)$.
Besides it, any orbits are characterized by rank and sign of the elements.
So for $r \geq 1$ let
$\mathcal{O}(r^+)=\{ a \in \S_n \mid \rk(a)=r,\; \sgn(a)=1\}$ and
$\mathcal{O}(r^-)=\{ a \in \S_n \mid \rk(a)=r,\; \sgn(a)=-1\}$,
then
\begin{equation}
_{\G_{n}}\! \backslash \S_{n} =\; \{\mathcal{O}(\lambda) \mid \lambda \in \Lambda \}, \qquad \Lambda =\; \Lambda_n=\; \{0\} \sqcup \{r^{\pm}|\;1 \leq r \leq n\}.
\end{equation}
Since this action is adjoint-free concerning the bilinear form  $\langle \; | \; \rangle$ in (\ref{form}), we can also parametrize $_{\G_n}\!\backslash (\S_n)\,\hat{}$ by $\Lambda$, and define canonical matrix $\varPhi=(\varPhi(\mu, \lambda))_{\mu, \lambda \in \Lambda}$ of $\mathcal{F}$ in (\ref{s-ft1}).
Easy observations induce some properties of $\varPhi$ as follows:

\begin{pp}\label{s-cme}\;Let $1 \leq s,\!r \leq n$ and assume all double-signs below correspond.\\
{\rm(1)}\; For odd $s$ and odd $r$,\;\;$\displaystyle\varPhi(s^+,r^{\pm})=\varPhi(s^-,r^{\mp}).$\quad
{\rm(2)}\; For odd $s$ and even $r$,\;\;$\displaystyle\varPhi(s^+,r^{\pm})=\;\varPhi(s^-,r^{\pm}).$\\
{\rm(3)}\; For even $s$ and odd $r$,\;\;$\displaystyle\varPhi(s^{\pm},r^+)=\;\varPhi(s^{\pm},r^-),$\quad and {\rm(4)}\; For odd $r$,\;\;$\displaystyle\varPhi(0,r^+)=\;\varPhi(0,r^-).$
\end{pp}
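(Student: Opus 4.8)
The plan is to extract all four identities from a single symmetry of the canonical matrix induced by scaling with the fixed non-square $\delta$. Consider the map $m_\delta\colon \S_n \to \S_n$, $a \mapsto \delta a$. Since $m_\delta(\rho(g)a) = \delta\, g a\,{}^t\!g = \rho(g)(\delta a)$, the map $m_\delta$ is $\G_n$-equivariant and hence permutes the orbits. It obviously preserves rank, and by (\ref{del-sgn}) it preserves the sign when the rank is even and reverses it when the rank is odd. Writing $\lambda^\delta$ for the parameter obtained from $\lambda \in \Lambda$ by this rule (so $0^\delta = 0$, $(r^{\pm})^\delta = r^{\mp}$ for odd $r$, and $(r^{\pm})^\delta = r^{\pm}$ for even $r$), we get $m_\delta(\mathcal{O}(\lambda)) = \mathcal{O}(\lambda^\delta)$, and applying the rule twice gives $(\lambda^\delta)^\delta = \lambda$.

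First I would record the elementary bilinear identity on which everything hinges. Because the form (\ref{form}) restricted to $\S_n$ satisfies $\langle c \mid \delta a\rangle = \tr(c\,\delta a) = \tr(\delta c\, a) = \langle \delta c\mid a\rangle$, the associated characters obey $\theta_c(\delta a) = \theta_{\delta c}(a)$ for all $c,a \in \S_n$.

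The core step is then a change of variable in the defining sum (\ref{me}). Choosing a representative $c \in \mathcal{O}(\mu)$ and using that $m_\delta$ restricts to a bijection $\mathcal{O}(\lambda^\delta) \to \mathcal{O}((\lambda^\delta)^\delta) = \mathcal{O}(\lambda)$, I obtain
$$
\varPhi(\mu,\lambda)=\sum_{a \in \mathcal{O}(\lambda)}\overline{\theta_c(a)}=\sum_{b \in \mathcal{O}(\lambda^\delta)}\overline{\theta_c(\delta b)}=\sum_{b \in \mathcal{O}(\lambda^\delta)}\overline{\theta_{\delta c}(b)}=\varPhi(\mu^\delta,\lambda^\delta),
$$
where the final equality uses $\delta c \in \mathcal{O}(\mu^\delta)$ together with the fact that $\varPhi$ depends only on the orbit $\mathcal{P}(\mu)$ and not on the representative character. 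This master identity $\varPhi(\mu,\lambda)=\varPhi(\mu^\delta,\lambda^\delta)$ already contains the whole proposition.

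Finally I would read off the four cases by specializing. Taking $\mu = s^+$ with $s$ odd (so $\mu^\delta = s^-$) and $\lambda = r^{\pm}$ with $r$ odd (so $\lambda^\delta = r^{\mp}$) yields (1); the same $\mu$ with $r$ even (so $\lambda^\delta = r^{\pm}$) yields (2); taking $\mu = s^{\pm}$ with $s$ even (so $\mu^\delta = s^{\pm}$) and $\lambda = r^{+}$ with $r$ odd yields (3); and $\mu = 0$, $\lambda = r^{+}$ with $r$ odd yields (4). There is no serious obstacle, since the argument is a clean equivariance-plus-substitution; the only points to watch are the sign bookkeeping under $m_\delta$ (the correct use of (\ref{del-sgn}), including the convention $\sgn(0)=1$) and the representative-independence of $\varPhi$, both of which are already established in the excerpt.
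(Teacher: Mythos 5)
Your proof is correct and follows essentially the same approach as the paper's: the paper proves case (1) by the identical scaling-by-$\delta$ change of variable, writing $\sum_{b \in \mathcal{O}(r^+)} \theta(\langle a\mid b\rangle)=\sum_{b \in \mathcal{O}(r^+)} \theta(\langle \delta a\mid \delta^{-1}b\rangle)$ and using that multiplication by the non-square $\delta$ swaps the signed orbits in odd rank and fixes them in even rank, then disposes of the remaining cases with ``the others are likewise.'' Your only (harmless) repackaging is to shift the factor $\delta$ onto the character index via $\theta_c(\delta b)=\theta_{\delta c}(b)$ instead of splitting it as $\delta,\delta^{-1}$ between the two arguments of the form, and to record the outcome once as the master identity $\varPhi(\mu,\lambda)=\varPhi(\mu^\delta,\lambda^\delta)$ from which all four cases are read off.
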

\begin{proof}\; For (1), let $a \in \mathcal{O}(s^+)$.
Then $\delta a \in \mathcal{O}(s^-)$ since $s$ is odd.
And when $b$ runs over $\mathcal{O}(r^+)$, $\delta^{-1}b$ runs over $\mathcal{O}(r^-)$ because $r$ is odd.
Thus by (\ref{me}),
$\displaystyle
\varPhi(s^+,r^+)=\; \sum_{b \in \mathcal{O}(r^+)} \theta(\langle a| b\rangle)=\sum_{b \in \mathcal{O}(r^+)} \theta\left(\langle \delta a | \delta^{-1} b \rangle \right)=\varPhi(s^-,r^-).
$
The others are likewise.
\end{proof}

As for orbits of the $\mathbb{F}^\times \times \G_n$-action, we notice followings by (\ref{del-sgn}):
If $r$ is odd, elements in $\mathcal{O}(r^+)$ and $\mathcal{O}(r^-)$ are transitive each other, and if $r$ is even, elements in $\mathcal{O}(r^+)$ and $\mathcal{O}(r^-)$ are not transitive each other by the $\mathbb{F}^\times \times \G_n$-action.
Thus for odd numbers $r$, let $\mathcal{O}(r)= \mathcal{O}(r^+) \sqcup \mathcal{O}(r^-)=\{a \in \S_n \mid \rk(a)=r\}$.
Then
\begin{equation}
 \label{lambda-dash}
_{\mathbb{F}^\times \times \G_{n}}\! \backslash \S_{n} =\; \{\mathcal{O}(\lambda) \mid \lambda \in \Lambda' \}, \quad \Lambda' =\; \Lambda_n'=\; \{0\} \sqcup \{r \mid 1 \leq r \leq n,\; \text{odd}\} \sqcup \{r^{\pm} \mid 1 \leq r \leq n,\; \text{even}\}.
\end{equation}

Next we introduce a basis $\mathcal{B}$ of $\mathbb{C}[_{\G_{n}}\! \backslash \S_{n}]$,
which we use mainly in our exploration rather than the canonical basis $\{\chi_{\lambda} \mid \lambda \in \Lambda \}$.
For any $0 \leq r \leq n$, let $\chi_r$ be the characteristic function for matrices of the rank $r$ on $\S_n$.
And for $1 \leq r \leq n$ define $^{\sgn}\chi_r$ by
\begin{equation}
^{\sgn}\chi_r(a)=\; \sgn(a)\cdot \chi_{r}(a)=\begin{cases} \pm 1 & \text{if}\;\;\; a \in \mathcal{O}(r^{\pm})\; ,\; \text{double-sign corresponds}  \\ 0 & \text{otherwise}
\end{cases}\qquad\quad(a \in \S_n).
\end{equation}
Then $\chi_r=\chi_{r^+} + \chi_{r^-}$ and 
$^{\sgn}\chi_r=\chi_{r^+} - \chi_{r^-}$ hold, and
$\displaystyle
\mathcal{B}=\mathcal{B}_n
=\{ \chi_r \mid 0 \leq r \leq n \} \sqcup \{ ^{\sgn}\chi_r \mid 1 \leq r \leq n \}
$
consists of a basis of $\mathbb{C}[_{\G_{n}}\! \backslash \S_{n}]$.
At the same time, we change bases of $\mathbb{C}[\;_{\G_n} \!\backslash (\S_n)\,\hat{}\,]$ from
canonical basis $\{\chi_{\lambda} \mid \lambda \in \Lambda \}$
to a new basis
$\displaystyle
\hat{\mathcal{B}}
=\{ \chi_s \mid 0 \leq s \leq n \} \sqcup \{ ^{\sgn}\chi_s \mid 1 \leq s \leq n \}
$
by the same relation 
$\chi_s=\chi_{s^+} + \chi_{s^-}$ and 
$^{\sgn}\chi_s=\chi_{s^+} - \chi_{s^-}$ as above.
Subsets
$\displaystyle
\mathcal{B}'
= \{ \chi_r \mid 0 \leq r \leq n \} \sqcup \{ ^{\sgn}\chi_r \mid 1 \leq r \leq n,\; \text{even} \} \subset \mathcal{B}
$
and
$\displaystyle
\hat{\mathcal{B}}'
=\{ \chi_s \mid 0 \leq s \leq n \} \sqcup \{ ^{\sgn}\chi_s \mid 1 \leq s \leq n,\; \text{even} \} \subset \hat{\mathcal{B}}
$
span subspaces $\mathbb{C}[\, _{\mathbb{F}^\times \times \G_{n}}\! \backslash \S_{n}]$ and $\mathbb{C}[\, _{\mathbb{F}^\times \times \G_{n}}\! \backslash (\S_{n})\,\hat{}\,]$
respectively.
Fourier transformation $\mathcal{F}$ (\ref{s-ft1}) corresponds not only these subspaces by (\ref{s-ft2}), but also their complement spaces according to these bases. That is, 

\begin{pp}
 \label{hokukan}
\; Let $U \subset \mathbb{C}[\, _{\G_{n}}\! \backslash \S_{n}]$ be a subspace spanned by $\mathcal{A}= \{ ^{\sgn}\chi_r \mid 1 \leq r \leq n,\;\text{odd}\; \} \subset \mathcal{B}$.
Likewise let $\hat{U} = \mathrm{Span} (\hat{\mathcal{A}}),\;\;\hat{\mathcal{A}}= \{ ^{\sgn}\chi_s \mid 1 \leq s \leq n,\;\text{odd}\; \} \subset \hat{\mathcal{B}}$. Then $\mathcal{F}(U)=\hat{U}$.
\end{pp}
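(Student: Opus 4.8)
The plan is to verify the inclusion $\mathcal{F}(U) \subseteq \hat{U}$ directly on the spanning set $\mathcal{A}$, and then upgrade it to equality by an injectivity-and-dimension argument. First I would write each generator in terms of the canonical basis: by definition ${}^{\sgn}\chi_r = \chi_{r^+} - \chi_{r^-}$, and by (\ref{me}) and (\ref{ghuhen-henkan}) the Fourier transform of a canonical basis vector is $\mathcal{F}\chi_{\mathcal{O}} = \sum_{\mu \in \Lambda} \varPhi(\mu, \mathcal{O}) \chi_\mu$. Hence for odd $r$,
\begin{equation}\nonumber
\mathcal{F}({}^{\sgn}\chi_r) = \sum_{\mu \in \Lambda} \bigl( \varPhi(\mu, r^+) - \varPhi(\mu, r^-) \bigr) \chi_\mu,
\end{equation}
and the task reduces to showing that the coefficient family $\bigl(\varPhi(\mu, r^+) - \varPhi(\mu, r^-)\bigr)_{\mu}$ is supported on the odd-rank pairs and is antisymmetric in the two signs there, i.e. that the right-hand side is a combination of the vectors ${}^{\sgn}\chi_s = \chi_{s^+} - \chi_{s^-}$ with $s$ odd.

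The key step is to read off the needed cancellations from Prop.\ref{s-cme}, all applied with $r$ odd. The $\chi_0$-coefficient is $\varPhi(0, r^+) - \varPhi(0, r^-)$, which vanishes by part (4). For even $s$, part (3) gives $\varPhi(s^{\pm}, r^+) = \varPhi(s^{\pm}, r^-)$, so both the $\chi_{s^+}$- and $\chi_{s^-}$-coefficients vanish and no even-rank terms survive. For odd $s$, part (1) says $\varPhi(s^+, r^+) = \varPhi(s^-, r^-)$ and $\varPhi(s^+, r^-) = \varPhi(s^-, r^+)$, so setting $c_s := \varPhi(s^+, r^+) - \varPhi(s^+, r^-)$ the $\chi_{s^+}$-coefficient equals $c_s$ while the $\chi_{s^-}$-coefficient equals $-c_s$. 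Collecting these contributions yields
\begin{equation}\nonumber
\mathcal{F}({}^{\sgn}\chi_r) = \sum_{\substack{1 \le s \le n \\ s\ \text{odd}}} c_s\, {}^{\sgn}\chi_s \in \hat{U},
\end{equation}
which proves $\mathcal{F}(\mathcal{A}) \subseteq \hat{U}$ and hence $\mathcal{F}(U) \subseteq \hat{U}$.

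To finish, I would invoke that $\mathcal{F}$ is a linear isomorphism, so in particular injective on $\mathbb{C}[_{\G_n}\backslash \S_n]$, whence $\dim \mathcal{F}(U) = \dim U$. Since both $\mathcal{A}$ and $\hat{\mathcal{A}}$ are indexed by the odd integers in $\{1, \dots, n\}$, we have $\dim U = \dim \hat{U}$, and the inclusion $\mathcal{F}(U) \subseteq \hat{U}$ together with equal dimensions forces the equality $\mathcal{F}(U) = \hat{U}$.

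I expect the only genuine obstacle to be bookkeeping: matching the three parity cases (the $\chi_0$ term, even $s$, and odd $s$) to the correct clauses of Prop.\ref{s-cme} and confirming that the sign pattern in the odd-$s$ case is \emph{exactly} the antisymmetry $\chi_{s^+}-\chi_{s^-}$ defining $\hat{U}$. Everything else—the passage from the spanning set to the subspace and the dimension count—is formal. Note that part (2) of Prop.\ref{s-cme} plays no role here, since it concerns even $r$.
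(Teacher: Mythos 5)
Your proposal is correct and follows essentially the same route as the paper: the paper's proof likewise expands $\mathcal{F}({}^{\sgn}\chi_r)=\mathcal{F}(\chi_{r^+}-\chi_{r^-})$ in the canonical basis and applies parts (1), (3) and (4) of Prop.\ref{s-cme} to kill the $\chi_0$ and even-$s$ coefficients and to collect the odd-$s$ terms into multiples of ${}^{\sgn}\chi_s$. The only difference is that you make explicit the final injectivity-and-dimension step upgrading the inclusion $\mathcal{F}(U)\subseteq \hat{U}$ to equality, which the paper leaves implicit; that is a harmless (indeed slightly more careful) completion of the same argument.
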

\begin{proof}
For odd $r$, using Prop.\ref{s-cme},\;
$\displaystyle
\mathcal{F}(^{\sgn}\chi_r)
=\; \mathcal{F}(\chi_{r^+} - \chi_{r^-})\\
=\; \left( \varPhi(0, r^+) - \varPhi(0,r^-) \right) \chi_0
+ \sum_{s=1}^n \left\{ \left( \varPhi(s^+, r^+) - \varPhi(s^+,r^-) \right) \chi_{s^+}
+ \left( \varPhi(s^-, r^+) - \varPhi(s^-,r^-) \right) \chi_{s^-}
\right\}\\
= \frac{1}{2} \sum_{\text{odd}\; s} \left( \varPhi(s^+, r^+) - \varPhi(s^+,r^-) - \varPhi(s^-, r^+) + \varPhi(s^-,r^-)  \right)\, ^{\sgn}\chi_s.
$
\end{proof}

Now define the matrix of
$\mathcal{F}$ in (\ref{s-ft1})
on the bases $\mathcal{B}$ and $\hat{\mathcal{B}}$ by
\begin{equation}
 \label{Psi}
\Psi =\Psi_n=\left(\begin{array}{cc}\\[-12pt]
\Psi^{(1)}(s,r)& \Psi^{(2)}(s,r) \\[0pt]
\Psi^{(3)}(s,r)& \Psi^{(4)}(s,r)  \end{array}\right),
\end{equation}
where $\Psi^{(1)}$ is on $0 \leq s,\!r \leq n$, 
$\Psi^{(2)}$ is on $0 \leq s \leq n$ and $1 \leq r \leq n$,
$\Psi^{(3)}$ is on $1 \leq s \leq n$ and $0 \leq r \leq n$,
$\Psi^{(4)}$ is on $1 \leq s,\!r \leq n$.
It means
\begin{eqnarray}
 \label{f-chi}
&&\mathcal{F}(\chi_r)=\; \sum_{s=0}^n \Psi^{(1)}(s,r)\chi_s+\sum_{s=1}^n \Psi^{(3)}(s,r)\,^{\sgn}\chi_s \qquad (0 \leq r \leq n)\quad \text{and}\\
 \label{f-schi}
&&\mathcal{F}(^{\sgn}\chi_r)=\; \sum_{s=0}^n \Psi^{(2)}(s,r)\chi_s+\sum_{s=1}^n \Psi^{(4)}(s,r)\,^{\sgn}\chi_s \qquad (1 \leq r \leq n).
\end{eqnarray}
\noindent
By definition, the relations between $\varPhi$ and $\Psi$ are as follows:
\begin{eqnarray}
 \label{phi-psi}
&&\varPhi(0,r^{\pm})=\; \frac{1}{2}\left( \1(0,r) \pm \2(0,r) \right)\quad(1 \leq r \leq n,\; \text{double-sign corresponds}),\\
&&\varPhi(s^\alpha,r^\beta)=\; \frac{1}{2}\left( \1(s,r) + \beta \2(s,r) + \alpha \3(s,r) + \alpha \beta \4(s,r) \right)
\\ \nonumber
&&\qquad\qquad\qquad\qquad\qquad\qquad\qquad\qquad\qquad\qquad
(1 \leq s,r \leq n,\;\alpha,\beta \;\text{are signs}).
\end{eqnarray}
Since $\mathcal{F}(\chi_0)=1$, easily we have
\begin{equation}
 \label{psi-s-0}
\Psi^{(1)}(s,0)=1\;\;(0 \leq s \leq n),\quad \Psi^{(3)}(s,0)=0\;\;(1 \leq s \leq n).
\end{equation}
Remark we can write $\Psi$ for the matrix of $\mathcal{F}$ in (\ref{s-ft2}) on the bases $\mathcal{B}'$ and $\hat{\mathcal{B}}'$ also.
In fact it is the same matrix $\Psi$ in (\ref{Psi}) just removed rows and columns related to $\mathcal{A}$ and $\hat{\mathcal{A}}$ in Prop.\ref{hokukan}.
See some properties of $\Psi$ as follows:
\begin{pp}
 \label{psi-zero}
\;{\rm(1)}\; For any $s$ and odd $r$,\;\;$\displaystyle\Psi^{(2)}(s,r)=0$.
\quad{\rm(2)}\; For odd $s$ and any $r$, \;\;$\displaystyle\Psi^{(3)}(s, r)=0$.\\
\quad{\rm(3)}\; For odd $s$ and even $r$, or even $s$ and odd $r$,\;\;$\displaystyle\Psi^{(4)}(s,r)=0$.
\end{pp}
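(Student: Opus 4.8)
The plan is to express each of $\Psi^{(2)},\Psi^{(3)},\Psi^{(4)}$ as a linear combination of the canonical matrix elements $\varPhi(s^\alpha,r^\beta)$ by inverting the relations (\ref{phi-psi}), and then to feed in the symmetries of $\varPhi$ supplied by Proposition \ref{s-cme}, sorting the argument by the parities of $s$ and $r$. Since the four equations in (\ref{phi-psi}) (for the four sign-pairs $(\alpha,\beta)$) form an invertible linear system, solving them is immediate and deterministic, so the whole statement should reduce to bookkeeping.

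First I would carry out the inversion. For $1\le s,r\le n$ solving (\ref{phi-psi}) gives
\begin{align*}
\Psi^{(2)}(s,r) &= \tfrac{1}{2}\left[\varPhi(s^+,r^+) - \varPhi(s^+,r^-) + \varPhi(s^-,r^+) - \varPhi(s^-,r^-)\right],\\
\Psi^{(3)}(s,r) &= \tfrac{1}{2}\left[\varPhi(s^+,r^+) + \varPhi(s^+,r^-) - \varPhi(s^-,r^+) - \varPhi(s^-,r^-)\right],\\
\Psi^{(4)}(s,r) &= \tfrac{1}{2}\left[\varPhi(s^+,r^+) - \varPhi(s^+,r^-) - \varPhi(s^-,r^+) + \varPhi(s^-,r^-)\right],
\end{align*}
while for $s=0$ the first line of (\ref{phi-psi}) yields directly $\Psi^{(2)}(0,r)=\varPhi(0,r^+)-\varPhi(0,r^-)$. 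With these in hand I would verify each vanishing claim by parity. For (1), in the subcase ($s$ even, $r$ odd) Proposition \ref{s-cme}(3) gives $\varPhi(s^\pm,r^+)=\varPhi(s^\pm,r^-)$, so both bracketed differences in $\Psi^{(2)}$ cancel; in the subcase ($s$ odd, $r$ odd) Proposition \ref{s-cme}(1) identifies $\varPhi(s^+,r^+)=\varPhi(s^-,r^-)$ and $\varPhi(s^+,r^-)=\varPhi(s^-,r^+)$, again forcing cancellation; the boundary $s=0$ case follows from Proposition \ref{s-cme}(4). For (2), the case ($s$ odd, $r$ odd) uses Proposition \ref{s-cme}(1) and the case ($s$ odd, $r$ even) uses Proposition \ref{s-cme}(2), each collapsing $\Psi^{(3)}$ to $0$, while the $r=0$ column is already zero by (\ref{psi-s-0}). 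For (3), the case ($s$ odd, $r$ even) uses Proposition \ref{s-cme}(2) and the case ($s$ even, $r$ odd) uses Proposition \ref{s-cme}(3), in both of which the four terms of $\Psi^{(4)}$ pair off to cancel.

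I do not expect a genuine obstacle: once the inversion above is written down, the proof is purely a parity-and-sign verification against Proposition \ref{s-cme}. The only points demanding care are the degenerate indices not covered by the generic $1\le s,r\le n$ inversion, namely $s=0$ in part (1) and $r=0$ in part (2); these must be treated separately, via the special relation for $\varPhi(0,r^\pm)$ together with Proposition \ref{s-cme}(4) and the already-established identity (\ref{psi-s-0}).
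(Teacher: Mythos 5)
Your proof is correct, and all the boundary cases that could trip up the inversion ($s=0$ in part (1) via the first line of (\ref{phi-psi}) with Prop.\ref{s-cme}(4), and $r=0$ in part (2) via (\ref{psi-s-0})) are properly isolated and handled; the inversion formulas themselves are the right $\pm$-averages of the system (\ref{phi-psi}), and each parity subcase invokes exactly the matching clause of Prop.\ref{s-cme}. Your route differs from the paper's in organization rather than substance. The paper deduces part (1) (and, ``likewise,'' the rest) from the subspace statement Prop.\ref{hokukan}: since $\mathcal{F}(U)=\hat{U}$, the expansion (\ref{f-schi}) of $\mathcal{F}(^{\sgn}\chi_r)$ for odd $r$ can have no $\chi_s$-components and no $^{\sgn}\chi_s$-components with even $s$; the remaining claims follow similarly from the fact that $\mathcal{F}$ also restricts to the complementary subspaces spanned by $\mathcal{B}'$ and $\hat{\mathcal{B}}'$, i.e.\ from the transformation (\ref{s-ft2}) for the finer group $\mathbb{F}^\times \times \G_n$. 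That packaging buys a conceptual reading: the vanishing of these blocks of $\Psi$ is precisely the block-diagonality of $\mathcal{F}$ with respect to the decomposition into $\mathbb{F}^\times\times\G_n$-invariants and their complement. But the engine is the same in both treatments --- the proof of Prop.\ref{hokukan} carries out exactly the four-term sign cancellations via Prop.\ref{s-cme} that you perform entrywise --- so your version simply unwinds the intermediate lemma into a direct, more elementary verification, at the cost of the invariant-subspace interpretation.
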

\begin{proof}
\; For (1), since $^{\sgn}\chi_r \in U$ in Prop.\ref{hokukan}, $\mathcal{F}(^{\sgn}\chi_r)$ is written in a linear combination on $\hat{\mathcal{A}}$. Thus (\ref{f-schi}) implies.
Others are likewise.
\end{proof}

\subsection{Commutative diagram for the pairs $(\S_n, \G_n)$ and $(\S_{n-1}, \G_{n-1})$}

Here we see a relation between 
$
\mathcal{F}_n:\mathbb{C}[\,_{\G_n} \!\backslash \S_n] \rightarrow \mathbb{C}[\,_{\G_n} \!\backslash (\S_n)\,\hat{}\,]$
and
$
\mathcal{F}_{n-1}:\mathbb{C}[\,_{\G_{n-1}} \!\backslash \S_{n-1}] \rightarrow \mathbb{C}[\,_{\G_{n-1}} \!\backslash (\S_{n-1})\,\hat{}\,]
$
using statements in sec.2,
where $n \geq 1$ and we regard $\S_0=0$.
And we find some relations concerning the matrix $\Psi=\Psi_n$(\ref{Psi}).

Let $\pi: \S_n \rightarrow \S_{n-1},\; (b\mid z, \,^{t}\!z, w) \mapsto b$, where the notation $(b \mid z, \,^{t}\!z, w)\;(b \in \S_{n-1},\; z \in \mathbb{F}^{n-1},\; w \in \mathbb{F})$ is in (\ref{notation1}).
We take an intersection character corresponind to $e=(0\mid 0,0,1)$ and define $\pi_*=\pi_*^e$ and $\hat{\pi}=\hat{\pi}_e$.
Then since $^{t}\!\pi(b)+e=(b\mid 0,0,1)$, ($\natural 3'$) holds and
\begin{equation}
\label{s-hatpi1}
\hat{\pi}(0)=1^+,\quad
\hat{\pi}(v^{\pm})=(v+1)^{\pm}\qquad (1 \leq v \leq n-1,\; \text{double-sign corresponds})
\end{equation}
by (\ref{sgn-seki}), where $\hat{\pi}: \Lambda_{n-1} \rightarrow \Lambda_n$(\ref{hatpi-para}).
And we get a commutative diagram (\ref{d2}) as follows:
\begin{equation}
 \label{s-d1}
\begin{CD}
\mathbb{C}[_{\G_{n}}\!\backslash \S_{n}] @>{\mathcal{F}_{n}}>>\mathbb{C}[_{\G_{n}}\!\backslash (\S_{n})\,\hat{}\;] \\
@V{\pi_{*}}V{\qquad\qquad\quad\;\;\circlearrowright}V  @VV{\hat{\pi}^*}V \\
\mathbb{C}[_{\G_{n-1}}\!\backslash \S_{n-1}] @>>{\mathcal{F}_{n-1}}> \mathbb{C}[_{\G_{n-1}}\!\backslash (\S_{n-1})\,\hat{}\;]
\end{CD}
\end{equation}

Let $\displaystyle \Delta=\left(\begin{array}{cc}\\[-12pt]
\Delta^{(1)}(v,s)& \Delta^{(2)}(v,s) \\[0pt]
\Delta^{(3)}(v,s)& \Delta^{(4)}(v,s)  \end{array}\right)$
be the matrix of $\hat{\pi}^*$ on the bases $\hat{\mathcal{B}}_n$ and $\hat{\mathcal{B}}_{n-1}$.
That is,
$\displaystyle
\hat{\pi}^* (\chi_s)=\; \sum_{v=0}^{n-1} \Delta^{(1)}(v,s)\chi_v+\sum_{v=1}^{n-1} \Delta^{(3)}(v,s)\,^{\sgn}\chi_v\;\;(0 \leq s \leq n),\;
$
and
$\displaystyle
\hat{\pi}^*\,(^{\sgn}\chi_s)=\; \sum_{v=0}^{n-1} \Delta^{(2)}(v,s)\chi_v+\sum_{v=1}^{n-1} \Delta^{(4)}(v,s)\,^{\sgn}\chi_v\;\;(1 \leq s \leq n)
$.
Then we have
\begin{eqnarray}
 \label{s-Delta1}
\begin{split}
\Delta^{(1)}(v,s)=\;\delta_{v+1,s},\quad
&\Delta^{(2)}(v,s)=\;\delta_{v,0} \cdot\delta_{s,1},\\
\Delta^{(3)}(v,s)=\;0,\quad
&\Delta^{(4)}(v,s)=\;\delta_{v+1,s}.
\end{split}
\end{eqnarray}
In fact,
by calculations using (\ref{s-hatpi1}),
$\displaystyle
\Delta^{(1)}(0,s)
=\; \hat{\pi}^* \chi_s (0)
=\; \chi_s(1^+)
=\; \delta_{1,s},\;\;
$
and
$\displaystyle
\Delta^{(1)}(v,s)
=\; \frac{1}{2}\left( \hat{\pi}^* \chi_s (v^+) + \hat{\pi}^* \chi_s (v^-) \right)
=\; \frac{1}{2}\left( \chi_s(v+1^+) + \chi_s(v+1^-) \right)
=\; \delta_{v+1,s}\;(v \geq 1).
$
Likewise
$\displaystyle
\Delta^{(2)}(0,s)
=\; \hat{\pi}^* \,^{\sgn}\chi_s (0)
=\; ^{\sgn}\chi_s(1^+)
=\; \delta_{s,1},\;\;
$
and
$\displaystyle
\Delta^{(2)}(v,s) 
=\; \frac{1}{2}\left( \hat{\pi}^* \,^{\sgn}\chi_s (v^+) + \hat{\pi}^* \,^{\sgn}\chi_s (v^-) \right)
\\=\; \frac{1}{2}\left( ^{\sgn}\chi_s(v+1^+) + ^{\sgn}\chi_s(v+1^-) \right)
=0 \;(v \geq 1).
$
And so on.

On the other hand, 
let $\displaystyle E=\left(\begin{array}{cc}\\[-12pt]
E^{(1)}(u,r)& E^{(2)}(u,r) \\[0pt]
E^{(3)}(u,r)& E^{(4)}(u,r)  \end{array}\right)$
be the matrix of $\pi_*$ on the bases $\mathcal{B}_n$ and $\mathcal{B}_{n-1}$.
That is,
$\displaystyle
{\pi}_* (\chi_r)=\; \sum_{u=0}^{n-1} E^{(1)}(u,r)\chi_u+\sum_{u=1}^{n-1} E^{(3)}(u,r)\,^{\sgn}\chi_u\;\;(0 \leq r \leq n),\;
$
and
$\displaystyle
{\pi}_*\,(^{\sgn}\chi_r)=\; \sum_{u=0}^{n-1} E^{(2)}(u,r)\chi_u+\sum_{u=1}^{n-1} E^{(4)}(u,r)\,^{\sgn}\chi_u\;\;(1 \leq r \leq n)
$.
Then we have
\begin{eqnarray}
 \label{s-E1}
\begin{split}
&E^{(1)}(0,0)=1,\quad E^{(1)}(0,1)= -1,\\
&E^{(2)}(u,u)=\gamma^u,\quad E^{(2)}(u,u+1)=\gamma^{u+1} \quad(0 \leq u \leq n-1),\\
&E^{(3)}(u,u)=\gamma^u,\quad E^{(3)}(u,u+1)= -\gamma^{u} \quad(1 \leq u \leq n-1),\\
\text{and}\;&\; \text{except for the above,}\;\; E^{(i)}(u,r)=0\quad(i=1,2,3,4).
\end{split}
\end{eqnarray}
We calculate it below.
Assume $u \geq 1$.
Let's use $b^+= \diag(1, \cdots, 1,0,\cdots,0) \in \mathcal{O}_{n-1}(u^+)$ and $b^-= \diag(1, \cdots, 1,\delta^{-1},0,\cdots,0) \in \mathcal{O}_{n-1}(u^-)$ as representatives of orbits.
When writing $z=\binom{z'}{z''},\; z' \in \bbf^u,\; z'' \in \bbf^{n-u-1}$ for $z \in \bbf^{n-1}$ and
if $z'' \neq 0$,
we have
$\displaystyle
\;\chi_r \left(  b^{\pm}\,|\, z, \,^t\!z, w \right)
=\;
\chi_r \left(  b^{\pm}\,|\, z, \,^t\!z, 0 \right)
$
by row and column operations.
Since it doesn't depend on $w \in \bbf$, then
$\displaystyle
\sum_{w \in \bbf} \overline{\theta}(w) \chi_r \left(  b^{\pm}\,|\, z, \,^t\!z, w \right)
=0.
$
Therefore,
\begin{eqnarray}
 \begin{split}\nonumber
\pi_*\chi_r&(u^{+})=\; \sum_{z \in \bbf^{n-1}}\; \sum_{w \in \mathbb{F}} \overline{\theta}(w) \chi_r \left(  b^{+}\,|\, z, \,^t\!z, w  \right)
=\;\sum_{z' \in \bbf^u}\; \sum_{w \in \mathbb{F}} \overline{\theta}(w) \chi_r \left(  b^{+}\,||\, z', \,^t\!z', w \right)
\\
&=\; \;\sum_{z' \in \bbf^u}\; \sum_{w \in \mathbb{F}} \overline{\theta}(w) \chi_{r-u}\left(\!
w-\sum_{i=1}^u z_{i}^2 \right)
=\; \;\left(\sum_{x \in \bbf} \overline{\theta}(x^2)\right)^u
\; \sum_{y \in \mathbb{F}} \overline{\theta}\left(y\right) \chi_{r-u}\left(y\right)
\quad
=\; \begin{cases} \gamma^u & (r=u) \\ -\gamma^u & (r=u+1)\\ 0 & (\text{otherwise}) \end{cases},
 \end{split}
\end{eqnarray}
where we used $\sum_{x \in \mathbb{F}} \overline{\theta}(x^2)=\gamma$.
By calculating similarly, we have
$\pi_*\chi_r(u^-)= -\pi_*\chi_r(u^+)$.
Therefore $E^{(1)}(u,r)=\frac{1}{2}\left( \pi_*\chi_r(u^+) + \pi_*\chi_r(u^-) \right)=0\;(u \geq 1)$,
and $E^{(3)}(u,r)=\frac{1}{2}\left( \pi_*\chi_r(u^+) - \pi_*\chi_r(u^-) \right)=\pi_*\chi_r(u^+)$.
Similarly, 
\begin{eqnarray}
 \begin{split}\nonumber
\pi_* \,^{\sgn} \chi_r&(u^{+})
=\; \;\left(\sum_{x \in \bbf} \overline{\theta}(x^2)\right)^u
\; \sum_{y \in \mathbb{F}} \overline{\theta}\left(y\right)\,  ^{\sgn}\chi_{r-u}\left(y\right)
\quad
=\; \begin{cases} \gamma^u & (r=u) \\ \gamma^{u+1} & (r=u+1)\\ 0 & (\text{otherwise}) \end{cases},
 \end{split}
\end{eqnarray}
and $\pi_* \,^{\sgn} \chi_r(u^{-})
=\pi_* \,^{\sgn} \chi_r(u^{+})$.
Therefore $E^{(2)}(u,r)=\frac{1}{2}\left( \pi_*\,^{\sgn}\chi_r(u^+) + \pi_*\,^{\sgn}\chi_r(u^-) \right)\\ =\pi_* \,^{\sgn} \chi_r(u^{+})\;(u \geq 1)$,
and $E^{(4)}(u,r)=\frac{1}{2}\left( \pi_* \,^{\sgn} \chi_r(u^{+}) - \pi_* \,^{\sgn}\chi_r(u^-) \right)=0$.
We omit the case of $u=0$, so we finished calculations of (\ref{s-E1}).

Now using the diagram (\ref{s-d1}), matrices $\Delta$ (\ref{s-Delta1}) and $E$ (\ref{s-E1}),
we can induce some relations between matrix $\Psi_n$ and $\Psi_{n-1}$.
Firstly for $1 \leq r \leq n$, an eqution
$\hat{\pi}^* \mathcal{F}_n (\chi_r)= \mathcal{F}_{n-1} \pi_* (\chi_r)$
induces
\begin{eqnarray}
 \label{jun-1}
&&\Psi^{(1)}_n(v+1, r)=\; \gamma^r \Psi^{(2)}_{n-1}(v,r) - \gamma^{r-1} \Psi^{(2)}_{n-1}(v, r-1)\qquad(0 \leq v \leq n-1), \quad\text{and}\\
 \label{jun-2}
&&\Psi^{(3)}_n(v+1, r)=\; \gamma^r \Psi^{(4)}_{n-1}(v,r) - \gamma^{r-1} \Psi^{(4)}_{n-1}(v, r-1)\qquad(1 \leq v \leq n-1),
\end{eqnarray}
where consider $\Psi^{(2)}_{n-1}(v, 0)=1$ and $\Psi^{(4)}_{n-1}(v, 0)= \Psi^{(2)}_{n-1}(v, n)= \Psi^{(4)}_{n-1}(v, n)=0$.
In fact, we can calculate
$\displaystyle
\hat{\pi}^* \mathcal{F}_n \chi_r
= \hat{\pi}^* \left(\sum_{s=0}^n \Psi_{n}^{(1)}(s,r)\chi_s + \sum_{s=1}^n \Psi_n^{(3)}(s,r) \,^{\sgn}\chi_s \right)
= \sum_{v=0}^{n-1} \Psi_{n}^{(1)}(v+1,r)\chi_{v} + \sum_{v=1}^n \Psi_n^{(3)}(v+1,r) \,^{\sgn}\chi_{v}\;
$
(Remark we used $\Psi_{n}^{(3)}(1,r)=0$ by Prop.\ref{psi-zero}(2)),\;
and 
$\displaystyle
\mathcal{F}_{n-1} \pi_* \chi_r
= \mathcal{F}_{n-1} \left( -\gamma^{r-1} \,^{\sgn}\chi_{r-1} +
\gamma^{r} \,^{\sgn}\chi_{r}  \right) \\
= \sum_{v=0}^{n-1} \left( -\gamma^{r-1} \Psi^{(2)}_{n-1}(v,r-1) +\gamma^r \Psi^{(2)}_{n-1}(v,r) \right) \chi_v +
\sum_{v=1}^{n-1} \left( -\gamma^{r-1} \Psi^{(4)}_{n-1}(v,r-1) +\gamma^r \Psi^{(4)}_{n-1}(v,r) \right)\,^{\sgn}\chi_v.
$
Then comparing coefficients of bases, we get (\ref{jun-1}) and (\ref{jun-2}).
Likewise for $1 \leq r \leq n$, an equation $\hat{\pi}^* \mathcal{F}_n (^{\sgn}\chi_r)= \mathcal{F}_{n-1} \pi_* (^{\sgn}\chi_r)$ induces
\begin{eqnarray}
 \label{jun-3-0}
&&\Psi^{(2)}_n(1, r) + \Psi^{(4)}_n(1, r)
=\; \gamma^r \left( \Psi^{(1)}_{n-1}(0,r) + \Psi^{(1)}_{n-1}(0, r-1)\right),\\
 \label{jun-3}
&&\Psi^{(2)}_n(v+1, r)=\; \gamma^r \left( \Psi^{(1)}_{n-1}(v,r) + \Psi^{(1)}_{n-1}(v, r-1)\right)\qquad(1 \leq v \leq n-1), \quad\text{and}\\
 \label{jun-4}
&&\Psi^{(4)}_n(v+1, r)=\; \gamma^r \left( \Psi^{(3)}_{n-1}(v,r) +  \Psi^{(3)}_{n-1}(v, r-1) \right)\qquad(1 \leq v \leq n-1),
\end{eqnarray}
where $\Psi^{(1)}_{n-1}(v,n)=\Psi^{(3)}_{n-1}(v,n)=0$.
Next, remember the matrix of inverse Fourier transformation $\bar{\mathcal{F}_n}$ is $\frac{1}{|\S_n|} \overline{\Psi_n}$(complex conjugate) by (\ref{bar-change-me}).
Then for $1 \leq s \leq n$, an equation $\pi_* \bar{\mathcal{F}_n} (\chi_s)= \bar{\mathcal{F}}_{n-1} \hat{\pi}^* (\chi_s)$ induces
\begin{eqnarray}
 \label{gyaku-1}
&& \overline{\gamma}^{u+1} \Psi^{(3)}_{n}(u+1,s) +  \overline{\gamma}^{u} \Psi^{(3)}_{n}(u, s)
=\;  q^n \Psi^{(1)}_{n-1}(u, s-1)
\qquad(1 \leq u \leq n-1), \quad\text{and}\\
 \label{gyaku-2}
&& \overline{\gamma}^{u} \left( \Psi^{(1)}_{n}(u+1,s) -  \Psi^{(1)}_{n}(u, s) \right)
=\; - q^n \Psi^{(3)}_{n-1}(u, s-1)
\qquad(0 \leq u \leq n-1),
\end{eqnarray}
where $\Psi^{(3)}_{n-1}(0,s-1)=\Psi^{(1)}_{n-1}(0,s-1)$.
Finally for $1 \leq s \leq n$, an equation $\pi_* \bar{\mathcal{F}_n} (^{\sgn}\chi_s)= \bar{\mathcal{F}}_{n-1} \hat{\pi}^* (^{\sgn}\chi_s)$ induces
\begin{eqnarray}
 \label{gyaku-3-0}
&& \Psi^{(2)}_{n}(1,s) - \Psi^{(2)}_{n}(0,s) -  \overline{\gamma} \Psi^{(4)}_{n}(1, s)
=\;  - q^n \Psi^{(2)}_{n-1}(0, s-1),
\\
 \label{gyaku-3}
&& \overline{\gamma}^{u+1} \Psi^{(4)}_{n}(u+1,s) +  \overline{\gamma}^{u} \Psi^{(4)}_{n}(u, s)
=\;  q^n \Psi^{(2)}_{n-1}(u, s-1)
\qquad(1 \leq u \leq n-1), \quad\text{and}\\
 \label{gyaku-4}
&& \overline{\gamma}^{u} \left( \Psi^{(2)}_{n}(u+1,s) -  \Psi^{(2)}_{n}(u, s) \right)
=\; - q^n \Psi^{(4)}_{n-1}(u, s-1)
\qquad(1 \leq u \leq n-1),
\end{eqnarray}
where consider $\Psi^{(2)}_{n-1}(u, 0)=1,\;\Psi^{(4)}_{n-1}(u, 0)=0$.

Using above induced relations from (\ref{jun-1}) to (\ref{gyaku-4}), we can prove some properties of $\Psi=\Psi_n$ as follows:

\begin{pp}
 \label{psi-zero2}
\; For even $s$ and even $r$,\;\;$\displaystyle\Psi^{(4)}(s,r)=0$
(Combining Prop.\ref{psi-zero}(3), we know $\Psi^{(4)}(s,r)=0$ unless both $s$ and $r$ are odd).
\end{pp}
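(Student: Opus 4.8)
The plan is to obtain the vanishing directly from the recursion (\ref{jun-4}), with no induction on $n$ needed. First I would note that an even index $s$ in the admissible range $1 \le s \le n$ necessarily satisfies $s \ge 2$, so I may write $s = v+1$ with $v = s-1$; then $v \ge 1$, and $s \le n$ forces $v \le n-1$, so $v$ lies in the range $1 \le v \le n-1$ on which (\ref{jun-4}) holds. Substituting gives
\[
\Psi^{(4)}_n(s,r) = \gamma^{r}\left(\Psi^{(3)}_{n-1}(s-1,r) + \Psi^{(3)}_{n-1}(s-1,r-1)\right).
\]

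The crucial observation is that $s$ even makes $s-1$ odd, so both terms on the right are values of $\Psi^{(3)}_{n-1}$ at an \emph{odd} first argument. Prop.\ref{psi-zero}(2) --- which holds for the matrix $\Psi$ of every size, hence in particular for $\Psi_{n-1}$ --- asserts exactly that such values vanish, so both summands are $0$ and therefore $\Psi^{(4)}_n(s,r)=0$. I would remark that this argument in fact proves the vanishing for even $s$ and \emph{arbitrary} $r$; the subcase of even $s$ and odd $r$ is, however, already recorded in Prop.\ref{psi-zero}(3), so the only genuinely new content is the even-$s$, even-$r$ case stated here. Combining the two statements then yields $\Psi^{(4)}(s,r)=0$ unless both $s$ and $r$ are odd, as noted in the parenthetical remark.

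I do not anticipate a substantive obstacle: the whole proof is a single substitution into an already-derived recursion followed by an appeal to a previously established vanishing result. The only care required is the index bookkeeping (confirming that every even $s$ lands in the range of (\ref{jun-4}), and that $s-1$ is an odd first argument of $\Psi^{(3)}_{n-1}$), together with noting that the boundary convention $\Psi^{(3)}_{n-1}(v,n)=0$ stated just after (\ref{jun-4}) is consistent with --- indeed subsumed by --- the vanishing being invoked.
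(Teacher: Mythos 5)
Your proof is correct and is precisely the paper's own argument: the paper's one-line proof reads ``Use (\ref{jun-4}) and Prop.\ref{psi-zero}(2), or (\ref{gyaku-4}) and Prop.\ref{psi-zero}(1)'', and your substitution $s=v+1$ with $v=s-1$ odd, followed by the vanishing of $\Psi^{(3)}_{n-1}$ at odd first argument, is exactly the first of these two routes written out in full, with the index bookkeeping and the boundary convention $\Psi^{(3)}_{n-1}(v,n)=0$ handled correctly. The only difference is that the paper also records the alternative derivation via (\ref{gyaku-4}) and Prop.\ref{psi-zero}(1), which you do not need.
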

\begin{proof}
Use (\ref{jun-4}) and Prop.\ref{psi-zero}(2), or (\ref{gyaku-4}) and Prop.\ref{psi-zero}(1).
\end{proof}

\begin{pp}
 \label{rinsetu}
\;{\rm(1)}\; For odd $s \;( 1 \leq s \leq n-1 )$ and any $r$,\;\;
$\Psi^{(1)}(s,r)=\Psi^{(1)}(s+1,r)$.\\
\quad \;{\rm(2)}\; For $s\geq 1$ and even $r \;(0 \leq r \leq n)$,\;\;$\Psi^{(1)}(s,r)= -\Psi^{(1)}(s,r+1)$\;(where $\Psi^{(1)}_n(s,n+1)=0$).\\
\quad \;{\rm(3)}\; For even $s \; (0 \leq s \leq n-1)$ and any $r$,\;\;$\Psi^{(2)}(s,r)= \Psi^{(2)}(s+1,r)$.\\
\quad \;{\rm(4)}\; For any $s$ and odd $r \;(1 \leq r \leq n)$,\;\;$\Psi^{(3)}(s,r)= -\Psi^{(3)}(s,r+1)$\;(where $\Psi^{(3)}_n(s,n+1)=0$).
\end{pp}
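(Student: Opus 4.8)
The plan is to read each of the four identities off a single instance of one of the recursions (\ref{jun-1})--(\ref{gyaku-4}), exploiting that the $\Psi^{(i)}_{n-1}$-term on the right-hand side is killed by the parity patterns already established in Prop.\ref{psi-zero} and Prop.\ref{psi-zero2}. No induction on $n$ is needed: each claim collapses to one recursion whose inhomogeneous term vanishes for the relevant parity, and since $\gamma, q \neq 0$ the remaining homogeneous equation is exactly the asserted relation.

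For (1) I would use the inverse-transform recursion (\ref{gyaku-2}) with its row index set equal to the odd $s$ of the statement and its column index renamed $r$, so that it reads $\overline{\gamma}^{s}\bigl(\Psi^{(1)}_n(s+1,r)-\Psi^{(1)}_n(s,r)\bigr)=-q^n\,\Psi^{(3)}_{n-1}(s,r-1)$; for odd $s$ the right side vanishes because $\Psi^{(3)}_{n-1}(s,\cdot)=0$ by Prop.\ref{psi-zero}(2), giving $\Psi^{(1)}_n(s+1,r)=\Psi^{(1)}_n(s,r)$. For (2) I would use the forward recursion (\ref{jun-1}) with $v=s-1$: when $r$ is even the summands $\Psi^{(2)}_{n-1}(s-1,r-1)$ and $\Psi^{(2)}_{n-1}(s-1,r+1)$ have odd second index and hence vanish by Prop.\ref{psi-zero}(1), leaving $\Psi^{(1)}_n(s,r)=\gamma^{r}\Psi^{(2)}_{n-1}(s-1,r)$ and $\Psi^{(1)}_n(s,r+1)=-\gamma^{r}\Psi^{(2)}_{n-1}(s-1,r)$, whence $\Psi^{(1)}_n(s,r)=-\Psi^{(1)}_n(s,r+1)$; the endpoint $r=n$ is consistent with the convention $\Psi^{(1)}_n(s,n+1)=0$.

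For (4), the case of odd $s$ is immediate since $\Psi^{(3)}(s,\cdot)=0$ by Prop.\ref{psi-zero}(2); for even $s$ I would apply (\ref{jun-2}) with $v=s-1$, and for odd $r$ the terms $\Psi^{(4)}_{n-1}(s-1,r-1)$, $\Psi^{(4)}_{n-1}(s-1,r+1)$ vanish because $\Psi^{(4)}$ is supported on pairs of odd indices (Prop.\ref{psi-zero}(3) together with Prop.\ref{psi-zero2}), forcing $\Psi^{(3)}_n(s,r)=-\Psi^{(3)}_n(s,r+1)$. Part (3) I would split on the parity of $r$: for odd $r$ both $\Psi^{(2)}(s,r)$ and $\Psi^{(2)}(s+1,r)$ vanish by Prop.\ref{psi-zero}(1), so the identity is trivial; for even $r$ and even $s\geq 2$ I use (\ref{gyaku-4}) with row index $s$, whose right side $-q^n\Psi^{(4)}_{n-1}(s,r-1)$ vanishes since $s$ is even.

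The one genuinely separate point, and the step I expect to be the main obstacle, is the boundary case $s=0$ of (3): it is not covered by (\ref{gyaku-4}) (valid only for row index $\geq 1$) and instead needs the special relation (\ref{gyaku-3-0}), namely $\Psi^{(2)}_n(1,r)-\Psi^{(2)}_n(0,r)-\overline{\gamma}\,\Psi^{(4)}_n(1,r)=-q^n\Psi^{(2)}_{n-1}(0,r-1)$. Here I would again reduce to even $r$ (odd $r$ being handled by Prop.\ref{psi-zero}(1)) and check that both correction terms drop out: $\Psi^{(4)}_n(1,r)=0$ because $r$ is even, and $\Psi^{(2)}_{n-1}(0,r-1)=0$ because $r-1$ is odd, yielding $\Psi^{(2)}_n(0,r)=\Psi^{(2)}_n(1,r)$. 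Beyond this, the only care required is bookkeeping of the index ranges and of the boundary conventions for $\Psi^{(i)}_{n-1}$ at columns $-1$ and $n$, and invoking each zero pattern at the correct size ($n$ versus $n-1$).
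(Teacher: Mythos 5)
Your proposal is correct and is essentially the paper's own proof: each identity is read off a single recursion from sec.~5.2 whose inhomogeneous term vanishes by Prop.~\ref{psi-zero} or Prop.~\ref{psi-zero2}, with the same identification of the delicate $s=0$ case of (3) via (\ref{gyaku-3-0}). The only cosmetic differences are that for (2) and (4) the paper invokes (\ref{jun-3}) and (\ref{jun-4}) (the mirror recursions read one size up, whose left-hand sides vanish for odd column) where you use (\ref{jun-1}) and (\ref{jun-2}), and that in (1) the column $r=0$ lies outside the stated range of (\ref{gyaku-2}) and should be settled by (\ref{psi-s-0}) (namely $\Psi^{(1)}(s,0)=1$ for all $s$), a point the paper flags explicitly and which your generic remark about boundary bookkeeping only implicitly covers.
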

\begin{proof}
\;(1)\; Use (\ref{gyaku-2}) and Prop.\ref{psi-zero}(2) (except for the case $r=0$ by (\ref{psi-s-0}) ).
\;(2)\; By (\ref{jun-3}) and Prop.\ref{psi-zero}(1).
\;(3)\;  When $s \neq 0$, (\ref{gyaku-4}) and Prop.\ref{psi-zero2} imply.
When $r$ is odd, both sides are $0$ by Prop.\ref{psi-zero}(1). 
And when $s=0$ and $r$ is even, (\ref{gyaku-3-0}), Prop.\ref{psi-zero}(1) and (3) imply.
\;(4)\; By (\ref{jun-4}) and Prop.\ref{psi-zero2}.
\end{proof}



\subsection{Commutative diagram for the pairs $(\S_n, \mathbb{F}^\times \times \G_n)$ and $(\S_{n-2}, \mathbb{F}^\times \times \G_{n-2})$}

In this subsection, we abbreviate $\mathbb{F}^\times \times \G_n$ to $G_n$, and investigate a relation between the pairs $(\S_n, G_n)$ and $(\S_{n-2}, G_{n-2})$, where $n\geq 2$.

Let $\pi: \S_n \rightarrow \S_{n-2},\; (b\,||\, z, \,^{t}\!z, w) \mapsto b$, where the notation $(b \,||\, z, \,^{t}\!z, w)\;(b \in \S_{n-2},\; z \in \M_{n-2, 2},\; w \in \S_2)$ is the same as in (\ref{notation2}).
Take an intersection character corresponding to $\displaystyle e=(0\,||\,\,0,0,\; \left(\begin{array}{cc}\\[-10pt]
0& 1 \\[0pt]
1& 0  \end{array}\right)) \in \S_{n}$ and define $\pi_*=\pi_*^e$ and $\hat{\pi}=\hat{\pi}_e$.
Then since $\displaystyle {}^{t}\!\pi(b)+e=(b\,||\,0,0,\; \left(\begin{array}{cc}\\[-10pt]
0& 1 \\[0pt]
1& 0  \end{array}\right))\;(b \in \S_{n-2})$, the rank of it is bigger by 2 than that of $b$.
And since $\sgn\left(\begin{array}{cc}\\[-10pt]
0& 1 \\[0pt]
1& 0  \end{array}\right)=\epsilon$, we have $\sgn({}^{t}\!\pi(b)+e)=\epsilon \cdot \sgn(b)$ by (\ref{sgn-seki}).
Therefore we can regard
$\hat{\pi}: \Lambda_{n-2}' \rightarrow \Lambda_n'$
as a map between parameters in (\ref{lambda-dash}) by
\begin{equation}
\label{s-hatpi2}
\hat{\pi}(0)=2^\epsilon,\quad
\hat{\pi}(v)=v+2 \;(\text{$v$ is odd}),\quad
\hat{\pi}(v^{\pm})=(v+2)^{\pm\epsilon}\; (\text{$v$ is even, double-sign corresponds}).
\end{equation}
Especially ($\natural$) holds.
So we get a commutative diagram (\ref{d2}) as follows:
\begin{equation}
 \label{s-d2}
\begin{CD}
\mathbb{C}[_{G_{n}}\!\backslash \S_{n}] @>{\mathcal{F}_{n}}>>\mathbb{C}[_{G_{n}}\!\backslash (\S_{n})\,\hat{}\;] \\
@V{\pi_{*}}V{\qquad\qquad\quad\;\;\circlearrowright}V  @VV{\hat{\pi}^*}V \\
\mathbb{C}[_{\G_{n-2}}\!\backslash \S_{n-2}] @>>{\mathcal{F}_{n-2}}> \mathbb{C}[_{\G_{n-2}}\!\backslash (\S_{n-2})\,\hat{}\;]
\end{CD}
\end{equation}

Let $\displaystyle \Delta=\left(\begin{array}{cc}\\[-12pt]
\Delta^{(1)}(v,s)& \Delta^{(2)}(v,s) \\[0pt]
\Delta^{(3)}(v,s)& \Delta^{(4)}(v,s)  \end{array}\right)$
be the matrix of $\hat{\pi}^*$ on the bases $\hat{\mathcal{B}}'_n$ and $\hat{\mathcal{B}}'_{n-2}$,
that is,
$\displaystyle
\hat{\pi}^* (\chi_s)= \sum_{v=0}^{n-2} \Delta^{(1)}(v,s)\chi_v+\sum_{v\geq2, \text{even}} \Delta^{(3)}(v,s)\,^{\sgn}\chi_v\;\;(0 \leq s \leq n),\;
$
and\;
$\displaystyle
\hat{\pi}^*\,(^{\sgn}\chi_s)= \sum_{v=0}^{n-2} \Delta^{(2)}(v,s)\chi_v+\sum_{v\geq2, \text{even}} \Delta^{(4)}(v,s)\,^{\sgn}\chi_v\;\;(2 \leq s \leq n,\; \text{even})
$.
Then we have
\begin{eqnarray}
 \label{s-Delta2}
\begin{split}
\Delta^{(1)}(v,s)=\;\delta_{v+2,s},\quad
&\Delta^{(2)}(v,s)=\;\epsilon \cdot \delta_{v,0} \cdot\delta_{s,2},\\
\Delta^{(3)}(v,s)=\;0,\quad
&\Delta^{(4)}(v,s)=\;\epsilon \cdot \delta_{v+2,s},
\end{split}
\end{eqnarray}
by easy calculations using (\ref{s-hatpi2}) (Refer to calculations of (\ref{s-Delta1}) also).

On the other hand, 
let $\displaystyle E=\left(\begin{array}{cc}\\[-12pt]
E^{(1)}(u,r)& E^{(2)}(u,r) \\[0pt]
E^{(3)}(u,r)& E^{(4)}(u,r)  \end{array}\right)$
be the matrix of $\pi_*$ on the bases $\mathcal{B}'_n$ and $\mathcal{B}'_{n-2}$,
that is,
$\displaystyle
{\pi}_* (\chi_r)= \sum_{u=0}^{n-2} E^{(1)}(u,r)\chi_u+\sum_{u\geq 2, \text{even}} E^{(3)}(u,r)\,^{\sgn}\chi_u\;\;(0 \leq r \leq n),\;
$
and
$\displaystyle
{\pi}_*\,(^{\sgn}\chi_r)= \sum_{u=0}^{n-2} E^{(2)}(u,r)\chi_u+\sum_{u\geq 2, \even} E^{(4)}(u,r)\,^{\sgn}\chi_u\;\;(2 \leq r \leq n,\; \even)
$.
Then we have

\begin{eqnarray}
 \label{s-E2}
\begin{split}
&E^{(1)}(u,u)=q^u,\quad E^{(1)}(u,u+1)= q^u(q-1),\quad E^{(1)}(u,u+2)= -q^{u+1} \quad(0 \leq u \leq n-2),\\
&E^{(2)}(0,2)= -\epsilon q,\\
&E^{(4)}(u,u)=q^u,\quad E^{(4)}(u,u+2)= -\epsilon q^{u+1} \quad(2 \leq u \leq n-2,\;\even),\\
\text{and}\;&\; \text{except for the above,}\;\; E^{(i)}(u,r)=0\quad(i=1,2,3,4).
\end{split}
\end{eqnarray}
We calculate it below (similarly to (\ref{s-E1})).
Let's use $b^+= \diag(1, \cdots, 1,0,\cdots,0) \in \mathcal{O}_{n-2}(u^+)$ and $b^-= \diag(1, \cdots, 1,\delta^{-1},0,\cdots,0) \in \mathcal{O}_{n-2}(u^-)$ as representatives of orbits.
When writing $z=\binom{z'}{z''},\; z' \in \M_{u,2},\; z'' \in \M_{n-u-2,2}$ for $z \in \M_{n-2,2}$,
if $z'' \neq 0$, we have
$\displaystyle
\;^{\sgn}\chi_r \left(  b^{\pm} \,||\, z, \,^t\!z,\; \; \left(\begin{array}{cc}\\[-10pt]
x& w \\[0pt]
w& y  \end{array}\right)  \right)
=\;
^{\sgn}\chi_r \left(  b^{\pm} \,||\, z, \,^t\!z,\; \; \left(\begin{array}{cc}\\[-10pt]
x& 0 \\[0pt]
0& y  \end{array}\right)  \right)
$
by row and column operations and an adequate replacement of variable.
Therefore,
\begin{eqnarray}
 \begin{split}\nonumber
\pi_*\chi_r(u^{(+)})&=\; \sum_{z \in \M_{n-2,2}}\; \sum_{x,y,w \in \mathbb{F}} \overline{\theta}(2w) \chi_r \left(  b^{+}\,||\, z, \,^t\!z,\; \left(\!\begin{array}{cc}\\[-15pt]
x& w \\[0pt]
w& y  \end{array}\right)  \right)
\\
&=\;\sum_{z' \in \M_{u,2}}\; \sum_{x,y,w \in \mathbb{F}} \overline{\theta}(2w) \chi_r \left(  b^{+}\,||\, z', \,^t\!z',\; \left(\!\begin{array}{cc}\\[-15pt]
x& w \\[0pt]
w& y  \end{array}\right)  \right)
\\
&=\; \;\sum_{z' \in \M_{u,2}}\; \sum_{x,y,w \in \mathbb{F}} \overline{\theta}(2w) \chi_{r-u}\left(\!\begin{array}{cc}\\[-15pt]
x-\sum z_{i1}^2& w-\sum z_{i1}z_{i2} \\[0pt]
w-\sum z_{i1}z_{i2}& y-\sum z_{i2}^2  \end{array}\right)
\\
&=\; \;\sum_{z' \in \M_{u,2}}\; \sum_{x,y,w \in \mathbb{F}} \overline{\theta}\left(2(w+\sum z_{i1}z_{i2})\right) \chi_{r-u}\left(\!\begin{array}{cc}\\[-15pt]
x& w \\[0pt]
w& y  \end{array}\right)
\quad
=\; q^u\,\sum_{x,y,w \in \mathbb{F}}\overline{\theta}(2w) \chi_{r-u}\left(\!\begin{array}{cc}\\[-15pt]
x& w \\[0pt]
w& y  \end{array}\right),
 \end{split}
\end{eqnarray}
where $u^{(+)}$ denote $u\;(u=0$ or $\odd$) or $u^+\; (u\geq 2, \even)$, and $z'=(z_{ij} \mid 1\leq i \leq u,\; j=1,2) \in \M_{u,2}$.
Likewise we have
$\displaystyle
\pi_*\chi_r(u^-)
=\; q^u\,\sum_{x,y,w \in \mathbb{F}}\overline{\theta}(2w) \chi_{r-u}\left(\!\begin{array}{cc}\\[-15pt]
x& w \\[0pt]
w& y  \end{array}\right)
$
\;and\;
$\displaystyle
\pi_*\;^{\sgn}\chi_r(u^{(\pm)})
=\; \pm q^u\,\sum_{x,y,w \in \mathbb{F}}\overline{\theta}(2w)\, ^{\sgn}\chi_{r-u}\left(\!\begin{array}{cc}\\[-15pt]
x& w \\[0pt]
w& y  \end{array}\right)\;
$
(double-sign corresponds).
These equations imply that $E^{(i)}(u,r)=0\; (i=1,2,3,4)$ unless $r = u,u+1,u+2$, and induce all values of all $E^{(i)}(u,r)$ easily.
For example,
$\displaystyle
E^{(1)}(u,u+2)
= \pi_*\chi_{u+2}(u^{(+)})
= \; q^u\sum_{x,y,w \in \mathbb{F}}\overline{\theta}(2w) \chi_{2}\left(\!\begin{array}{cc}\\[-15pt]
x& w \\[0pt]
w& y  \end{array}\right)
=\; -q^{u+1},\;
$
or
$\displaystyle
E^{(3)}(u,r)=\;\frac{1}{2} \left( \pi_* \chi_r(u^+) - \pi_* \chi_r(u^-) \right)=0\;
$
at once,
and so on.

Now using the diagram(\ref{s-d2}), the matrices $\Delta$ (\ref{s-Delta2}) and $E$ (\ref{s-E2}), we get some relations between $\Psi_n$ and $\Psi_{n-2}$.
Remark we skip relations about $\Psi^{(4)}$, since we already know $\Psi^{(4)}(s,r)=0$ for $\even s,r$ by Prop.\ref{psi-zero2}.
Consider $\Psi^{(i)}_n(s,r)=0$ whenever $r \leq -1$ or $r \geq n+1$ in the following equations.
For $0 \leq r \leq n$, an equation
$\hat{\pi}^* \mathcal{F}_n (\chi_r)= \mathcal{F}_{n-2} \pi_* (\chi_r)$
induces
\begin{eqnarray}
 \label{2jun-1-0}
&&
\Psi^{(1)}_n(2, r)+ \epsilon \Psi^{(3)}_n(2, r)=\; -q^{r-1} \Psi^{(1)}_{n-2}(0,r-2) + q^{r-1}(q-1) \Psi^{(1)}_{n-2}(0,r-1) +q^{r} \Psi^{(1)}_{n-2}(0,r),
\\
 \label{2jun-1}
&&
\Psi^{(1)}_n(v+2, r)=\; -q^{r-1} \Psi^{(1)}_{n-2}(v,r-2) + q^{r-1}(q-1) \Psi^{(1)}_{n-2}(v,r-1) +q^{r} \Psi^{(1)}_{n-2}(v,r)\qquad(1 \leq v \leq n-2),
\\
 \label{2jun-2}
&&
\begin{split}
\epsilon \Psi^{(3)}_n(v+2, r)=\; -q^{r-1} \Psi^{(3)}_{n-2}(v,r-2) + q^{r-1}(q-1) \Psi^{(3)}_{n-2}(v,r-1) +q^{r} \Psi^{(3)}_{n-2}(v,r)\qquad(2 \leq v \leq n-2, \, \even).\\{}\\{}
\end{split}
\end{eqnarray}

\vspace{-0.3in}
\noindent
For $2 \leq r \leq n\;(\even)$, an equation $\hat{\pi}^* \mathcal{F}_n (^{\sgn}\chi_r)= \mathcal{F}_{n-2} \pi_* (^{\sgn}\chi_r)$ induces
\begin{equation}
 \label{2jun-3}
\Psi^{(2)}_n(v+2, r)=\; -\epsilon q^{r-1} \Psi^{(2)}_{n-2}(v,r-2) + q^{r} \Psi^{(2)}_{n-2}(v,r)\qquad(0 \leq v \leq n-2),
\end{equation}
where consider $\Psi^{(2)}_{n-2}(v,0)=1$.
Next using inverse Fourier transformations, for $0 \leq s \leq n$, an equation $\pi_* \bar{\mathcal{F}_n}(\chi_s)= \bar{\mathcal{F}}_{n-2} \hat{\pi}^* (\chi_s)$ induces
\begin{eqnarray}
 \label{2gyaku-1-0}
&&
\Psi^{(1)}_{n}(0,s) - \Psi^{(1)}_{n}(2,s) -\epsilon q \Psi^{(3)}_{n}(2,s)
=\;  q^{2n -1} \Psi^{(1)}_{n-2}(0, s-2),
\\
 \label{2gyaku-1}
&&
q^u \Psi^{(1)}_{n}(u,s) + q^u(q-1)\Psi^{(1)}_{n}(u+1,s) -q^{u+1} \Psi^{(1)}_{n}(u+2,s)
\\ \nonumber
&&\qquad\qquad\qquad\qquad\qquad\qquad
=\;  q^{2n -1} \Psi^{(1)}_{n-2}(u, s-2)
\quad(1 \leq u \leq n-2),
\\
 \label{2gyaku-2}
&&
q^u \Psi^{(3)}_{n}(u,s)  -\epsilon q^{u+1} \Psi^{(3)}_{n}(u+2,s)
=\;  q^{2n -1} \Psi^{(3)}_{n-2}(u, s-2)
\qquad(2 \leq u \leq n-2,\; \even).
\end{eqnarray}
Remark we used Prop.\ref{rinsetu}(1) together for (\ref{2gyaku-1-0}).
Finally for $2 \leq s \leq n\;(\even)$, an equation $\pi_* \bar{\mathcal{F}_n} (^{\sgn}\chi_s)= \bar{\mathcal{F}}_{n-2} \hat{\pi}^* (^{\sgn}\chi_s)$ induces
\begin{equation}
 \label{2gyaku-3}
q^u \Psi^{(2)}_{n}(u,s) + q^u(q-1)\Psi^{(2)}_{n}(u+1,s) -q^{u+1} \Psi^{(2)}_{n}(u+2,s)
=\;  \epsilon q^{2n -1} \Psi^{(2)}_{n-2}(u, s-2)
\qquad(0 \leq u \leq n-2),
\end{equation}
where consider $\Psi^{(2)}_{n-1}(u, 0)=1$.

\begin{pp}
 \label{gamma-jijo}
\; $\gamma^2=\epsilon q$.
\end{pp}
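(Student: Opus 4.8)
The plan is to compute $\gamma^2$ directly as a character sum and reduce it to the orthogonality relation $\sum_{a \in \mathbb{F}} \overline{\theta(a)} = 0$ together with the fact that $\sgn$ is a non-trivial multiplicative character. First I would write out the product
\[
\gamma^2 = \sum_{a \neq 0}\sum_{b \neq 0} \sgn(a)\sgn(b)\,\overline{\theta(a)}\,\overline{\theta(b)} = \sum_{a \neq 0}\sum_{b \neq 0} \sgn(ab)\,\overline{\theta(a+b)},
\]
using multiplicativity $\sgn(a)\sgn(b)=\sgn(ab)$ (valid since $q$ is odd, so $\sgn$ is the quadratic residue character) and additivity of $\theta$. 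The key manoeuvre is the substitution $b = ac$: for each fixed $a \neq 0$, as $b$ runs over $\mathbb{F}^\times$ the element $c=b/a$ also runs over $\mathbb{F}^\times$, and then $\sgn(ab)=\sgn(a^2 c)=\sgn(c)$ because $\sgn(a^2)=1$, while $a+b = a(1+c)$.

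After this substitution the sum factors as $\gamma^2 = \sum_{c \neq 0}\sgn(c)\sum_{a \neq 0}\overline{\theta(a(1+c))}$, and I would evaluate the inner sum by cases. When $c = -1$ the argument $a(1+c)$ is $0$, so $\overline{\theta}=1$ and the inner sum equals $q-1$. When $c \neq -1$, the map $a \mapsto a(1+c)$ is a bijection of $\mathbb{F}^\times$, so the inner sum is $\sum_{a' \neq 0}\overline{\theta(a')} = -1$ by orthogonality. This yields
\[
\gamma^2 = \sgn(-1)(q-1) - \sum_{\substack{c \neq 0 \\ c \neq -1}} \sgn(c).
\]

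Finally I would use $\sum_{c \neq 0}\sgn(c)=0$ (equal numbers of squares and non-squares in $\mathbb{F}^\times$), so that $\sum_{c \neq 0,\, c \neq -1}\sgn(c) = -\sgn(-1)$, giving $\gamma^2 = \sgn(-1)(q-1) + \sgn(-1) = \sgn(-1)\,q = \epsilon q$, where the last step is the identity $\epsilon = \sgn(-1)$ recorded just after the definition of $\epsilon$. I do not expect a genuine obstacle, since this is the classical quadratic Gauss sum evaluation; the only points needing mild care are confirming over a general prime power $q$ (not merely a prime field) that $\sgn$ is multiplicative and takes each value equally often—both follow from $\mathbb{F}^\times$ being cyclic of even order $q-1$—and keeping the exceptional term $c=-1$ separate from the orthogonality step.
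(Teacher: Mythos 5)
Your proof is correct, and it is a genuinely different route from the paper's. You give the classical direct evaluation of the quadratic Gauss sum, and each step checks out: multiplicativity of $\sgn$ and the balance $\sum_{c \neq 0}\sgn(c)=0$ both hold over any odd prime power $q$ because $\mathbb{F}^\times$ is cyclic of even order $q-1$; the substitution $b=ac$ and the isolation of the $c=-1$ term are handled correctly; the inner sum for $c \neq -1$ is $-1$ by orthogonality of the non-trivial additive character $\theta$; and the final identification uses $\epsilon = \sgn(-1)$, which the paper records immediately after defining $\epsilon$. The paper instead obtains $\gamma^2 = \epsilon q$ as a \emph{consistency condition of its own machinery}: it specializes the recursion (\ref{2jun-1}) (at $n=2$, $r=1$), which comes from the two-step commutative diagram for $(\S_n, \mathbb{F}^\times \times \G_n)$ and $(\S_{n-2}, \mathbb{F}^\times \times \G_{n-2})$ and whose transfer matrix $E$ in (\ref{s-E2}) contains no Gauss sums, to get $\Psi^{(3)}_2(2,1)=\epsilon q$; it then substitutes this into (\ref{gyaku-1}), which comes from the inverse transform in the one-step diagram and carries explicit powers of $\gamma$ via the matrix $E$ in (\ref{s-E1}), forcing $\gamma^2 = \epsilon q$. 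What the paper's argument buys is economy (two lines, given relations already in hand) and the structural point that the diagram formalism alone reproduces the classical Gauss-sum evaluation; what yours buys is independence and transparency: it uses nothing from sections 2--5, so the value of $\gamma^2$ is available before, and serves as an external check on, the recursions, and it shows exactly where oddness of $q$ and non-triviality of $\theta$ and $\sgn$ enter.
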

\begin{proof}
\; Put $n=2$ and $r=1$ in (\ref{2jun-1}), we have $\3_2(2,1)= \epsilon q$.
Substituting it in (\ref{gyaku-1}), we get the result. 
\end{proof}


\subsection{Values of matrix $\Psi$}
\label{5-4}

At the end of our investigation, using results in previous subsections together, we try to represent all values of  matrix elements of  $\Psi=\Psi_n$ (\ref{Psi}) in $q$-Krawtchouk polynomials.
Here $\lfloor \cdot \rfloor$ denotes the floor function.

Firstly as for $\Psi^{(1)}(s,r) \;(s \neq 0)$, results are as follows:
\begin{pp}
 \label{psi1}
\; For $n \geq 1,\;1 \leq s \leq n$ and $0\leq r \leq n$,
$$
\1_{n}(s,r)=\;(-1)^{r+x} q^{x(x+1)}
(q^{2N+(-1)^n}; q^{-2})_x \begin{bmatrix} N \\ x \end{bmatrix}_{x^2}
\, K^{\mathrm{Aff}}_y(x; q^{-2N-(-1)^n}, N; q^2),
$$\vspace{-0.1in}
$$\text{where}\quad N=\lfloor \frac{n-1}{2} \rfloor,\;y=\lfloor \frac{s-1}{2} \rfloor,\; x=\lfloor \frac{r}{2} \rfloor.
$$
\end{pp}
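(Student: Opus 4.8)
The plan is to reduce the three parameters $(n,s,r)$ to the triple $(N,y,x)$ appearing in the statement and then recognize the resulting two-variable family as the solution of the coupled Pascal recursions of Section \ref{3}. \textbf{First} I would use Proposition \ref{rinsetu} to strip the redundancy in $\Psi^{(1)}$: part (1) gives $\Psi^{(1)}(s,r)=\Psi^{(1)}(s+1,r)$ for odd $s$, so the value depends on $s$ only through $y=\lfloor(s-1)/2\rfloor$; part (2) gives $\Psi^{(1)}(s,r)=-\Psi^{(1)}(s,r+1)$ for even $r$, so the value at odd $r=2x+1$ is determined by that at $2x$, and it suffices to compute $\Psi^{(1)}_n(2y+1,2x)$ (the passage back to odd $r$ will later turn the $(-1)^x$ of the answer into the stated $(-1)^{r+x}$). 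Since the parity of $n$ is preserved by the $n\mapsto n-2$ diagram (\ref{s-d2}), I would fix that parity and set $f_N(y,x)=\Psi^{(1)}_{2N+1}(2y+1,2x)$ in the odd case and $f_N(y,x)=\Psi^{(1)}_{2N+2}(2y+1,2x)$ in the even case, where $N=\lfloor(n-1)/2\rfloor$.

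\textbf{Next} I would convert the $\Psi_n\leftrightarrow\Psi_{n-2}$ relations into backward- and forward-shift Pascal recursions. The forward relation (\ref{2jun-1}) is a three-term recursion in $r$; using Proposition \ref{rinsetu}(2) to rewrite $\Psi^{(1)}_{n-2}(v,r-1)=-\Psi^{(1)}_{n-2}(v,r-2)$ merges its first two terms and yields, for even $r=2x$, the two-term form $f_N(y+1,x)=q^{2x}f_{N-1}(y,x)-q^{2x}f_{N-1}(y,x-1)$, i.e. BPR (\ref{koutai}) with $a=1$, $b=q^2$, $t=q^2$. Symmetrically, the inverse-transform relation (\ref{2gyaku-1}) is a three-term recursion in the first index $u$; applying Proposition \ref{rinsetu}(1) to odd $u$ collapses its first two terms and gives $f_N(y+1,x)-f_N(y,x)=-q^{2n-2y-3}f_{N-1}(y,x-1)$, i.e. FPR (\ref{zensin}) with $c=1$, $t=q^2$, and $d=q$ for odd $n$, $d=q^3$ for even $n$. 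The initial datum is $\sigma=f_0(0,0)=\Psi^{(1)}(1,0)=1$ by (\ref{psi-s-0}).

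\textbf{Finally}, since $t=q^2\neq1$, I would invoke Proposition \ref{zenkasikitoku}(3). Substituting $(a,b,c,t,\sigma)=(1,q^2,1,q^2,1)$ with $d=q$ (odd $n$) or $d=q^3$ (even $n$), both parities satisfy $\tfrac{d}{b}t^{N}=q^{2N+(-1)^n}$ and $\tfrac{b}{d}t^{-N}=q^{-2N-(-1)^n}$, so the formula of Proposition \ref{zenkasikitoku}(3) produces $O_N(x)=(-1)^x q^{x(x+1)}(q^{2N+(-1)^n};q^{-2})_x\begin{bmatrix}N\\x\end{bmatrix}_{q^2}$ (using $2x+x(x-1)=x(x+1)$) and $f_N(y,x)=O_N(x)\,K^{\mathrm{Aff}}_y(x;q^{-2N-(-1)^n},N;q^2)$, which is exactly the asserted value at even $r=2x$. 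Reinstating the sign from Proposition \ref{rinsetu}(2) extends it to odd $r$ and upgrades $(-1)^x$ to $(-1)^{r+x}$, completing the proof. \textbf{The main obstacle} is precisely the folding of the two genuinely three-term recursions (\ref{2jun-1}) and (\ref{2gyaku-1}) into the clean two-term shapes (\ref{koutai}) and (\ref{zensin}); this is where the even/odd sign structure of symmetric matrices, encoded in the adjacency relations of Proposition \ref{rinsetu}, is indispensable. Checking that the folded recursions hold over the full ranges $0\le y\le N-1$, $0\le x\le N$ (with the boundary conventions $f_N(y,-1)=0$ and $f_N(y,0)=\Psi^{(1)}(2y+1,0)=1$) is routine but must be verified.
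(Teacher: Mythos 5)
Your proposal is correct and follows essentially the same route as the paper's proof: reduce to odd $s$ and even $r$ via Proposition \ref{rinsetu}, fold the three-term relations (\ref{2jun-1}) and (\ref{2gyaku-1}) into the two-term recursions (\ref{2jun-1}') and (\ref{2gyaku-1}') using Proposition \ref{rinsetu}(2) and (1), split by the parity of $n$ into the families $\Psi^{(1)}_{2N+1}(2y+1,2x)$ and $\Psi^{(1)}_{2N+2}(2y+1,2x)$ with exactly the paper's BPR/FPR data $(a,b,c,t,\sigma)=(1,q^2,1,q^2,1)$ and $d=q$ or $q^3$, and finish with Proposition \ref{zenkasikitoku}(3). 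Your parameter checks, the initial value from (\ref{psi-s-0}), and the sign bookkeeping $(-1)^x\mapsto(-1)^{r+x}$ all agree with the paper (whose subscript $\begin{bmatrix} N \\ x \end{bmatrix}_{x^2}$ is a typo for $\begin{bmatrix} N \\ x \end{bmatrix}_{q^2}$, as you correctly wrote).
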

\begin{proof}\;
First consider $\1(s,r)$ for $\odd s$ and $\even r$.
Then (\ref{2jun-1}) is changed by Prop.\ref{rinsetu}(2) as follows:
\begin{equation} \tag{\ref{2jun-1}'}
\Psi^{(1)}_n(v+2, r)=\; -q^{r} \Psi^{(1)}_{n-2}(v,r-2) + q^{r} \Psi^{(1)}_{n-2}(v,r)\qquad(1 \leq v \leq n-2,\odd\; \text{and}\; 0\leq r \leq n,\even).
\end{equation}
Likewise, (\ref{2gyaku-1}) is changed by Prop.\ref{rinsetu}(1) as follows:
\begin{equation} \tag{\ref{2gyaku-1}'}
q^{u+1} \Psi^{(1)}_{n}(u,s) - q^{u+1} \Psi^{(1)}_{n}(u+2,s)
=\;  q^{2n -1} \Psi^{(1)}_{n-2}(u, s-2)
\quad(1 \leq u \leq n-2,\odd\; \text{and}\; 0\leq s \leq n,\even).
\end{equation}
Define two families $\{f_{N}\}_{N \geq 0}$ and $\{g_{N}\}_{N \geq 0}$ by
$$
f_{N}(y,x)= \1_{2N+1}(2y+1,2x), \quad g_N(y,x)= \1_{2N+2}(2y+1,2x)\qquad (N \geq 0,\; 0 \leq x,\!y \leq N).
$$
Then (\ref{2jun-1}') and (\ref{2gyaku-1}') are regarded as
BPR(\ref{koutai}) and FPR(\ref{zensin}) for $\{f_N\}$ of the case $a=1,\;b=q^2,\;c=1,\; d=q,\; t=q^2$ and $\sigma=\1_{1}(1,0)=1$, and for $\{g_N\}$ of the case $a=1,\;b=q^2,\;c=1,\; d=q^3,\; t=q^2$ and $\sigma=\1_{2}(1,0)=1$.
So we conclude by Prop.\ref{zenkasikitoku}(3),
\begin{equation}
\label{psi1-1-r}
\1_{n}(1,2x)=\; \;(-1)^x q^{x(x+1)} \frac{(q^{n-1};q^{-1})_{2x}}{(q^2;q^2)_x}\qquad(0 \leq 2x \leq n),\\
\end{equation}
\vspace{-0.25in}
\begin{eqnarray}
\begin{split}
&\1_{2N+1}(2y+1,2x)
=\; \1_{2N+1}(1,2x) \; K^{\mathrm{Aff}}_y(x; q^{-2N+1}, N; q^2), \qquad(0 \leq y,\!x \leq N)\;\; \text{and}\\
&\1_{2N+2}(2y+1,2x)
=\; \1_{2N+2}(1,2x) \; K^{\Aff}_y(x; q^{-2N-1}, N; q^2)\qquad(0 \leq y,x \leq N).
\end{split}
\end{eqnarray}
And by Prop.\ref{rinsetu}(1) and (2), all results in Prop.\ref{psi1} are gotten.
\end{proof}

Next let's solve $\Psi^{(1)}(0,r)\;(0 \leq r \leq n)$ for completing $\1$.
\begin{pp}
 \label{1shoki}
\;$\displaystyle (1)\qquad 
\1_n(0,2x)
=\; (-1)^x q^{x(x+1)} \frac{(q^n;q^{-1})_{2x}}{(q^2;q^2)_x}
\qquad (0 \leq 2x \leq n),
$
\\
$$\displaystyle (2)\qquad
\1_n(0,2x+1)
=\; (-1)^{x+1} q^{x(x+1)} \frac{(q^n;q^{-1})_{2x+1}}{(q^2;q^2)_x}\qquad (1 \leq 2x+1 \leq n).
$$
\end{pp}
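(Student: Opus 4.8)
The plan is to reduce the $s=0$ row of $\Psi^{(1)}_n$ to the $s=0$ row of $\Psi^{(1)}_{n-1}$ together with the already–known $s=1$ row, and then to run an induction on $n$. The key input is the relation (\ref{gyaku-2}) evaluated at $u=0$: since $\overline{\gamma}^0=1$ and $\Psi^{(3)}_{n-1}(0,r-1)=\Psi^{(1)}_{n-1}(0,r-1)$ by the convention stated there, it becomes
\[
\Psi^{(1)}_n(0,r)=\;\Psi^{(1)}_n(1,r)+q^n\,\Psi^{(1)}_{n-1}(0,r-1)\qquad(1\leq r\leq n),
\]
where $r$ denotes the rank index (the second argument). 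This is exactly a step that lowers both $n$ and $r$ by one, so it is tailored for induction on $n$.

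First I would record the two ingredients feeding the right-hand side. The even-rank value $\Psi^{(1)}_n(1,2x)$ is already in closed form in (\ref{psi1-1-r}), and the odd-rank value follows from it by the reflection relation Prop.\ref{rinsetu}(2) with $s=1$, namely $\Psi^{(1)}_n(1,2x+1)=-\Psi^{(1)}_n(1,2x)$; thus $\Psi^{(1)}_n(1,r)$ is explicit for every $r$. The base of the induction is $r=0$: by (\ref{psi-s-0}) we have $\Psi^{(1)}_n(0,0)=1$, agreeing with formula (1) at $x=0$ for every $n$ (and covering $n=0$).

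For the inductive step I would separate the two parities of $r$. If $r=2x\geq 2$, the recursion combined with the induction hypothesis (formula (2) for $n-1$ at the odd index $2x-1$) writes $\Psi^{(1)}_n(0,2x)$ as a sum of two explicit terms; after clearing the common factor $(-1)^x q^{x(x+1)}(q^{n-1};q^{-1})_{2x-1}/(q^2;q^2)_{x-1}$ this collapses to the scalar identity
\[
q^{x(x+1)}(1-q^{n-2x})+q^{n+x(x-1)}(1-q^{2x})=\;q^{x(x+1)}(1-q^n),
\]
where the two middle monomials cancel because $x(x+1)-2x=x(x-1)$; together with $(q^n;q^{-1})_{2x}=(1-q^n)(q^{n-1};q^{-1})_{2x-1}$ and $(q^2;q^2)_x=(q^2;q^2)_{x-1}(1-q^{2x})$ this gives formula (1). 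If $r=2x+1$, the recursion uses the induction hypothesis (formula (1) for $n-1$ at $2x$) and the odd value $\Psi^{(1)}_n(1,2x+1)=-\Psi^{(1)}_n(1,2x)$; here the computation is lighter, since both terms share $(-1)^x q^{x(x+1)}(q^{n-1};q^{-1})_{2x}/(q^2;q^2)_x$ and combine into $(q^n-1)$ times it, and writing $q^n-1=-(1-q^n)$ together with the telescoping identity $(1-q^n)(q^{n-1};q^{-1})_{2x}=(q^n;q^{-1})_{2x+1}$ yields formula (2).

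The only real obstacle is the bookkeeping in the even case: one must align the three $q^{-1}$-Pochhammer symbols of differing lengths and the $q^2$-Pochhammer denominators so that everything reduces to the single scalar identity above, while keeping track of the boundary conventions (the vanishing of $\Psi^{(1)}_{n-1}(0,r-1)$ once $r-1$ leaves the admissible range, and the validity ranges $0\leq 2x\leq n$ and $1\leq 2x+1\leq n$ inherited from (\ref{psi1-1-r}) and Prop.\ref{rinsetu}). Everything else is a direct substitution into the recursion.
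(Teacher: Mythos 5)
Your proof is correct, and for part (1) it takes a genuinely different route from the paper's. The paper stays inside the even-rank data coming from the $(\S_n,\bbf^\times\times\G_n)$ versus $(\S_{n-2},\bbf^\times\times\G_{n-2})$ diagram: it combines (\ref{2jun-1-0}), Lemma~\ref{rinsetu-2} and (\ref{2gyaku-1-0}) to produce the self-contained two-step recursion (\ref{1shoki-even}), namely $\1_n(0,r)=q^r\1_{n-2}(0,r)+(q^{2n-1}-q^r)\,\1_{n-2}(0,r-2)$ for even $r$, and solves it via the pattern (\ref{o-zenka}) of Section~\ref{3} according to the parity of $n$; part (2) is then deduced from the $u=0$ case of (\ref{gyaku-2}) exactly as in your odd step. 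You instead use that single one-step relation $\1_n(0,r)=\1_n(1,r)+q^n\1_{n-1}(0,r-1)$ for \emph{both} parities and intertwine (1) and (2) in one induction on $n$, with (1) at level $n$ consuming (2) at level $n-1$ and conversely. I verified your two computations: the scalar identity $q^{x(x+1)}(1-q^{n-2x})+q^{n+x(x-1)}(1-q^{2x})=q^{x(x+1)}(1-q^n)$ and the telescopings $(q^n;q^{-1})_{2x}=(1-q^n)(q^{n-1};q^{-1})_{2x-1}$ and $(q^n;q^{-1})_{2x+1}=(1-q^n)(q^{n-1};q^{-1})_{2x}$ are right; the indices stay in range (for $1\leq r\leq n$ one always has $0\leq r-1\leq n-1$, and the base $\1_n(0,0)=1$ is (\ref{psi-s-0})); and all inputs --- (\ref{gyaku-2}), (\ref{psi1-1-r}), Prop.~\ref{rinsetu}(2) --- are established before Prop.~\ref{1shoki}, so there is no circularity. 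What each approach buys: yours is leaner, bypassing Lemma~\ref{rinsetu-2} (and with it Prop.~\ref{gamma-jijo}), the $n\to n-2$ relations and the Section~\ref{3} recursion solver, at the cost of coupling the two formulas into one induction; the paper's route keeps (1) independent of (2) and of the odd-rank values, and casts it in the same BPR/FPR-style framework used uniformly for every other family in Sections~\ref{four} and~5. One cosmetic slip: the common factor to clear in your even case is $(-1)^x(q^{n-1};q^{-1})_{2x-1}/(q^2;q^2)_x$, not the expression you name (which wrongly absorbs $q^{x(x+1)}$ and uses the denominator $(q^2;q^2)_{x-1}$); since the scalar identity you then state is the correct one, the argument is unaffected.
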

\begin{lem}
 \label{rinsetu-2}
\;For $\even r\;(0 \leq r \leq n)$, $\displaystyle \Psi^{(1)}_n(0,r) + \Psi^{(1)}_n(0,r+1)= \epsilon q^{-r-1} \,\3_{n+2}(2,r+1)$.
\end{lem}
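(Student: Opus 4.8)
The plan is to rewrite both sides of the asserted identity as scalar multiples of the single quantity $\2_n(0,r)$ and then to check that the two scalars agree. The two recursion families of this subsection --- the ``direct'' ones (\ref{jun-1})--(\ref{jun-4}) coming from $\hat{\pi}^*\mathcal{F}_n=\mathcal{F}_{n-1}\pi_*$ and the ``inverse'' ones (\ref{gyaku-1})--(\ref{gyaku-4}) coming from $\pi_*\bar{\mathcal{F}}_n=\bar{\mathcal{F}}_{n-1}\hat{\pi}^*$ --- together with the parity-vanishing statements of Prop.\ref{psi-zero} and the value $\gamma^2=\epsilon q$ of Prop.\ref{gamma-jijo}, are exactly what is needed; no new geometric input is required.

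First I would simplify the right-hand side. Applying (\ref{gyaku-1}) with the size raised to $n+2$, $u=1$, $s=r+1$, and using $\3_{n+2}(1,r+1)=0$ (Prop.\ref{psi-zero}(2), since $1$ is odd) together with $\overline{\gamma}^2=\gamma^2=\epsilon q$ by Prop.\ref{gamma-jijo} (as $\epsilon q$ is real), I obtain $\3_{n+2}(2,r+1)=\epsilon q^{n+1}\,\1_{n+1}(1,r)$, so that $\epsilon q^{-r-1}\3_{n+2}(2,r+1)=q^{n-r}\,\1_{n+1}(1,r)$. Next I express $\1_{n+1}(1,r)$ through (\ref{jun-1}) at $v=0$: since $r$ is even, $r-1$ is odd and $\2_n(0,r-1)=0$ by Prop.\ref{psi-zero}(1), so only one term survives and $\1_{n+1}(1,r)=\gamma^r\,\2_n(0,r)$. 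Hence the right-hand side equals $q^{n-r}\gamma^r\,\2_n(0,r)$.

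Then I would treat the left-hand side. Raising (\ref{jun-3-0}) to size $n+1$ and reading it at $r+1$ gives $\2_{n+1}(1,r+1)+\4_{n+1}(1,r+1)=\gamma^{r+1}\bigl(\1_n(0,r)+\1_n(0,r+1)\bigr)$; as $r+1$ is odd, $\2_{n+1}(1,r+1)=0$ by Prop.\ref{psi-zero}(1), so the left-hand side is $\gamma^{-r-1}\4_{n+1}(1,r+1)$. To eliminate $\4_{n+1}(1,r+1)$ I use (\ref{gyaku-3-0}) at size $n+1$ with $s=r+1$: both $\2_{n+1}(\cdot,r+1)$ terms vanish (odd second index, Prop.\ref{psi-zero}(1)), leaving $\overline{\gamma}\,\4_{n+1}(1,r+1)=q^{n+1}\2_n(0,r)$, i.e.\ $\4_{n+1}(1,r+1)=\overline{\gamma}^{-1}q^{n+1}\2_n(0,r)$. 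Hence the left-hand side equals $\gamma^{-r-1}\overline{\gamma}^{-1}q^{n+1}\,\2_n(0,r)$.

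It remains to match the scalars, i.e.\ to verify $\gamma^{-r-1}\overline{\gamma}^{-1}q^{n+1}=q^{n-r}\gamma^r$, equivalently $q^{n+1}=q^{n-r}\gamma^{2r+1}\overline{\gamma}$. The only input beyond Prop.\ref{gamma-jijo} is $\gamma\overline{\gamma}=q$, which follows immediately from $(\gamma\overline{\gamma})^2=\gamma^2\overline{\gamma}^2=(\epsilon q)^2=q^2$ and $\gamma\overline{\gamma}=|\gamma|^2\geq 0$. Then $\gamma^{2r+1}\overline{\gamma}=(\gamma^2)^r\gamma\overline{\gamma}=(\epsilon q)^r q=\epsilon^r q^{r+1}$, and since $r$ is even $\epsilon^r=1$, giving $q^{n-r}\gamma^{2r+1}\overline{\gamma}=q^{n+1}$, as required. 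The main obstacle is not any single step but the bookkeeping: tracking the powers of $\gamma$ and $\overline{\gamma}$ through the four recursions and invoking the correct parity clause of Prop.\ref{psi-zero} at each stage. The hypothesis ``$r$ even'' is used twice in an essential way, to kill $\2_n(0,r-1)$ and to force $\epsilon^r=1$. Finally, the boundary value $r=0$ lies outside the stated range of (\ref{jun-1}), but there the claim is immediate from $\1(s,0)=1$ in (\ref{psi-s-0}) together with the conventions $\2_{n-1}(v,0)=1$, so it can be disposed of separately.
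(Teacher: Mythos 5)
Your proof is correct, but after its first step it takes a genuinely different path through the recursion network than the paper does. Both arguments open the same way: (\ref{jun-3-0}) at size $n+1$, combined with $\Psi^{(2)}_{n+1}(1,r+1)=0$ (Prop.\ref{psi-zero}(1), $r+1$ odd), turns the left-hand side into $\gamma^{-r-1}\,\Psi^{(4)}_{n+1}(1,r+1)$. The paper then finishes in one further stroke: (\ref{jun-2}) at size $n+2$ with $v=1$ gives $\Psi^{(3)}_{n+2}(2,r+1)=\gamma^{r+1}\Psi^{(4)}_{n+1}(1,r+1)$, the second term of that recursion dying because $\Psi^{(4)}_{n+1}(1,r)=0$ for even $r$ (Prop.\ref{psi-zero}(3)); then $\gamma^{2(r+1)}=(\epsilon q)^{r+1}$ and $\epsilon^{r+1}=\epsilon$ close the computation, so the paper uses only two direct recursions and $\gamma^2=\epsilon q$, never touching the inverse family. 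You instead eliminate $\Psi^{(4)}_{n+1}(1,r+1)$ via the inverse recursion (\ref{gyaku-3-0}), separately reduce the right-hand side via (\ref{gyaku-1}) and (\ref{jun-1}), and route both sides through the common quantity $\Psi^{(2)}_n(0,r)$, paying for the detour with the extra fact $\gamma\overline{\gamma}=q$ — which you correctly extract from Prop.\ref{gamma-jijo} alone via $(\gamma\overline{\gamma})^2=(\epsilon q)^2$ and $\gamma\overline{\gamma}=|\gamma|^2\geq 0$. Every step checks out: the parity clauses of Prop.\ref{psi-zero} are invoked where needed, the conclusion is an equality of scalar multiples of $\Psi^{(2)}_n(0,r)$ so no nonvanishing hypothesis is required, and your separate disposal of $r=0$ (where (\ref{jun-1}) is out of its stated range but $\Psi^{(1)}_{n+1}(1,0)=1=\Psi^{(2)}_n(0,0)$ by (\ref{psi-s-0}) and the convention) is exactly the care that boundary demands — indeed you are more explicit about the implicit vanishing facts than the paper is. What your longer route buys is the intermediate closed relations $\Psi^{(3)}_{n+2}(2,r+1)=\epsilon q^{n+1}\Psi^{(1)}_{n+1}(1,r)$ and $\Psi^{(1)}_{n+1}(1,r)=\gamma^{r}\,\Psi^{(2)}_n(0,r)$, plus an independent consistency check between the direct and inverse recursion families; what the paper's route buys is brevity and economy of hypotheses, since it needs neither $\gamma\overline{\gamma}=q$ nor any of the (\ref{gyaku-1})--(\ref{gyaku-4}) relations.
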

\begin{proof}\;
We have
$\displaystyle \4_{n+1}(1,r+1)= \gamma^{r+1}\left( \1_n(0,r) + \1_n(0,r+1) \right)$ by (\ref{jun-3-0}).
And $\displaystyle \3_{n+2}(2,r+1)=\gamma^{r+1} \4_{n+1}(1,r+1)$
by (\ref{jun-2}).
Using these equations with $\gamma^2=\epsilon q$ (Lem.\ref{gamma-jijo}), we get the result.
\end{proof}
\noindent
\underline{\it Proof} of Prop.\ref{1shoki}.\;\;
When $r$ is even, (\ref{2jun-1-0}) is changed as\; \\
$\displaystyle
\1_n(2,r)+ \epsilon \3_n(2,r)
= q^{r-1}(q-1) \left( \1_{n-2}(0,r-2) + \1_{n-2}(0,r-1) \right) -q^r \1_{n-2}(0,r-2) + q^r \1_{n-2}(0,r).$
And by the lemme, we have
$\displaystyle
\1_n(2,r)+ \epsilon q \3_n(2,r)
=  -q^r \1_{n-2}(0,r-2) + q^r \1_{n-2}(0,r).$
Substituting it in (\ref{2gyaku-1}), we have
\begin{equation}
 \label{1shoki-even}
\1_n(0,r)= q^r \1_{n-2}(0,r) + (q^{2n-1} - q^r)\,\1_{n-2}(0,r-2)\quad (0 \leq r \leq n,\; \even).
\end{equation}
We can apply the recursion (\ref{o-zenka}) to this (according to the parity of $n$), and we get Prop.\ref{1shoki}(1).
On the other hand, we have 
$\displaystyle
\1_n(0,r)= \1_n(1,r) + q^n \1_{n-1}(0,r-1)\;(r \geq 1)
$
by (\ref{gyaku-2}).
Substituting $r=2x+1$ and using Prop.\ref{psi1} and \ref{1shoki}(1), we have (2).
\vspace{-0.2in}
\begin{flushright}
$\Box$
\end{flushright}

$\2$ is also solved as follows (with Prop.\ref{psi-zero}(1), we complete $\2$):
\begin{pp}
 \label{psi2}
\; For $n \geq 2,\;0 \leq s \leq n$ and $\even r,\;2\leq r \leq n$,
$$
\2_{n}(s,r)=\;(-\epsilon)^{x} q^{x^2}
(q^{2N-(-1)^n}; q^{-2})_x \begin{bmatrix} N \\ x \end{bmatrix}_{x^2}
\, K^{\mathrm{Aff}}_y(x; q^{-2N+(-1)^n}, N; q^2),
$$\vspace{-0.1in}
$$\text{where}\quad N=\lfloor \frac{n}{2} \rfloor,\;y=\lfloor \frac{s}{2} \rfloor,\; x= \frac{r}{2}.
$$
\end{pp}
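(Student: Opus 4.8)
The plan is to reproduce the proof of Prop.\ref{psi1} essentially verbatim, now applied to the block $\2$ with even second argument. Since $\2(s,r)=0$ for odd $r$ by Prop.\ref{psi-zero}(1), only the entries with $r=2x$ matter, and these are exactly the ones the statement addresses. The first step is to isolate a clean backward/forward pair of recursions linking $\2_n$ to $\2_{n-2}$. The backward relation is already in usable shape: in (\ref{2jun-3}) I substitute $v=2y$, $r=2x$ to obtain $\2_n(2y+2,2x)=-\epsilon q^{2x-1}\2_{n-2}(2y,2x-2)+q^{2x}\2_{n-2}(2y,2x)$. The forward relation must first be simplified. Starting from (\ref{2gyaku-3}) and taking the first argument $u$ even, Prop.\ref{rinsetu}(3) gives $\2_n(u,s)=\2_n(u+1,s)$, which collapses the three left-hand terms to two and yields $\2_n(u+2,s)-\2_n(u,s)=-\epsilon q^{2n-u-2}\2_{n-2}(u,s-2)$. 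This collapse is the only delicate point; the rest is bookkeeping.

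I then separate by the parity of $n$, setting $f_N(y,x)=\2_{2N}(2y,2x)$ and $g_N(y,x)=\2_{2N+1}(2y,2x)$, so that the indices $N=\lfloor n/2\rfloor$, $y=\lfloor s/2\rfloor$ and $x=r/2$ agree with those in the statement. Reading the two displayed recursions through (\ref{koutai}) and (\ref{zensin}), both families satisfy BPR and FPR of Section \ref{3} with $a=c=1$, $b=\epsilon q$, $t=q^2$; the only difference is $d=\epsilon$ for $\{f_N\}$ (even $n$) versus $d=\epsilon q^2$ for $\{g_N\}$ (odd $n$), the extra $q^2$ coming from the $q^{2n}$ in the forward exponent. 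In all cases $\sigma=f_0(0,0)=\2(0,0)=1$ under the convention $\2(s,0)=1$ already in force around (\ref{2jun-3}) and (\ref{2gyaku-3}). If one wishes to avoid the degenerate $n=0$ entry, the initial data $O_N(x)=\2_{2N}(0,2x)$ can instead be obtained directly from (\ref{jun-3-0}): its $\4$-term vanishes for even $r$ by Prop.\ref{psi-zero}(3), and $\2(0,r)=\2(1,r)$ by Prop.\ref{rinsetu}(3), so $\2_n(0,2x)=(\epsilon q)^x\bigl(\1_{n-1}(0,2x)+\1_{n-1}(0,2x-1)\bigr)$, which the already-proven Prop.\ref{1shoki} evaluates in closed form (using $\gamma^2=\epsilon q$ from Prop.\ref{gamma-jijo}).

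Since $t=q^2\neq1$, Prop.\ref{zenkasikitoku}(3) now solves both families at once. Part (i) produces $O_N(x)=(-\epsilon)^x q^{x^2}(\tfrac{d}{b}q^{2N};q^{-2})_x\begin{bmatrix}N\\x\end{bmatrix}_{q^2}$, where $\tfrac{d}{b}q^{2N}=q^{2N-1}$ for even $n$ and $q^{2N+1}$ for odd $n$, i.e. $q^{2N-(-1)^n}$; part (ii) multiplies this by $K^{\Aff}_y(x;\tfrac{b}{d}q^{-2N},N;q^2)$ with $\tfrac{b}{d}q^{-2N}=q^{-2N+(-1)^n}$. This is precisely the formula of Prop.\ref{psi2} for all even first arguments $s=2y$. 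Finally the odd first arguments are free: Prop.\ref{rinsetu}(3) gives $\2(2y,r)=\2(2y+1,r)$, and since $\lfloor(2y+1)/2\rfloor=y$ the target expression is unchanged, so the formula holds for every $0\le s\le n$. The main obstacle is the forward-recursion simplification of the first paragraph; once that two-term FPR is in hand, the argument is a direct transcription of the $\1$ case, and the only care required is the consistent propagation of the two sign conventions $\epsilon$ and $(-1)^n$ together with the $r=0$ boundary value.
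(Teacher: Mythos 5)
Your proof is correct and is essentially the paper's own argument: the same BPR from (\ref{2jun-3}), the same collapse of (\ref{2gyaku-3}) via Prop.\ref{rinsetu}(3) into the two-term FPR (\ref{2gyaku-3}'), the same parameters $a=c=1$, $b=\epsilon q$, $t=q^2$, $d=\epsilon$ (even $n$) or $d=\epsilon q^2$ (odd $n$), $\sigma=1$ in Prop.\ref{zenkasikitoku}(3), and the same extension to odd first arguments by Prop.\ref{rinsetu}(3). Your optional re-derivation of the initial data $\2_n(0,2x)$ from (\ref{jun-3-0}), Prop.\ref{psi-zero}(3), Prop.\ref{gamma-jijo} and Prop.\ref{1shoki} is a correct small addition (it indeed reproduces (\ref{psi2-0-r})), but it does not change the method.
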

\begin{proof}\;
First we consider $\2(s,r)$ for $s$ is also even.
Then (\ref{2gyaku-3}) is changed by Prop.\ref{rinsetu}(3) as follows:
\begin{equation} \tag{\ref{2gyaku-3}'}
q^{u+1} \Psi^{(2)}_{n}(u,s) -q^{u+1} \Psi^{(2)}_{n}(u+2,s)
=\;  \epsilon q^{2n -1} \Psi^{(2)}_{n-2}(u, s-2)
\qquad(0 \leq u \leq n-2,\even\; \text{and}\; 2\leq s \leq n,\even).
\end{equation}
(\ref{2jun-3}) and (\ref{2gyaku-3}') give 
BPR(\ref{koutai}) and FPR(\ref{zensin}) for $f_N(y,x)=\2_{2N}(2y,2x)$ of the case $a=1,\;b=\epsilon q,\;c=1,\; d=\epsilon,\; t=q^2,\;\sigma=1$,
and for $g_N(y,x)=\2_{2N+1}(2y,2x)$ of the case $a=1,\;b=\epsilon q,\;c=1,\; d=\epsilon q^2,\; t=q^2,\;\sigma=1$.
Therefore by Prop.\ref{zenkasikitoku}(3),
\begin{equation}
\label{psi2-0-r}
\2_{n}(0,2x)=\; \;(-\epsilon)^x q^{x^2} \frac{(q^n; q^{-1})_{2x}}{(q^2;q^2)_x}\qquad(0 \leq 2x \leq n),\\
\end{equation}
\vspace{-0.25in}
\begin{eqnarray}
\begin{split}
&\2_{2N}(2y,2x)
=\; \2_{2N}(0,2x) \; K^{\mathrm{Aff}}_y(x; q^{-2N+1}, N; q^2), \qquad(0 \leq y,\!x \leq N)\;\; \text{and}\\
&\2_{2N+1}(2y,2x)
=\; \2_{2N+1}(0,2x) \; K^{\Aff}_y(x; q^{-2N-1}, N; q^2)\qquad(0 \leq y,\!x \leq N),
\end{split}
\end{eqnarray}
where consider $\2(s,0)=1$.
And by Prop.\ref{rinsetu}(3), all results in Prop.\ref{psi2} are gotten.
\end{proof}

Next, $\3$ is as follows (with (\ref{psi-s-0}) and Prop.\ref{psi-zero}(2), we complete $\3$):
\begin{pp}
 \label{psi3}
\; For $n \geq 2,\;\even s,\; 2 \leq s \leq n$ and $1\leq r \leq n$,
$$
\3_{n}(s,r)= 
\;(-1)^{r+x+1} \epsilon^{y+1} q^{n+x^2+x-y-1}
(q^{2N-(-1)^n}; q^{-2})_x \begin{bmatrix} N \\ x \end{bmatrix}_{x^2}
\, K^{\mathrm{Aff}}_y(x; q^{-2N+(-1)^n}, N; q^2),
$$\vspace{-0.1in}
$$\text{where}\quad N=\lfloor \frac{n-2}{2} \rfloor,\;y=\frac{s-2}{2},\; x=\lfloor \frac{r-1}{2} \rfloor.
$$
\end{pp}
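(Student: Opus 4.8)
The plan is to mirror the proofs of Prop.\ref{psi1} and Prop.\ref{psi2}: extract from the second commutative diagram a backward-shift Pascal recursion (BPR) and a forward-shift Pascal recursion (FPR) for $\3$ on even $s$, pin down the initial row, and then invoke Prop.\ref{zenkasikitoku}(3). Since $\3(s,r)=0$ for odd $s$ by Prop.\ref{psi-zero}(2), and since Prop.\ref{rinsetu}(4) links column $r$ (odd) with column $r+1$ (even), it suffices to treat even $s$ together with \emph{odd} $r$; the even-$r$ values then follow from the sign flip $\3(s,r+1)=-\3(s,r)$. This flip is precisely what produces the factor $(-1)^{r+x+1}$ and the floor $x=\lfloor (r-1)/2\rfloor$ in the claimed closed form, so I would build the whole argument on odd columns and extend at the very end.

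First I would substitute odd $r$ into (\ref{2jun-2}) and apply Prop.\ref{rinsetu}(4) (which gives $\3_{n-2}(v,r-1)=-\3_{n-2}(v,r-2)$) to collapse the two lower-order terms, obtaining the clean two-term recursion $\epsilon\,\3_n(v+2,r)=q^{r}\3_{n-2}(v,r)-q^{r}\3_{n-2}(v,r-2)$; the same substitution turns (\ref{2gyaku-2}) into a two-term recursion in the row index. Splitting according to the parity of $n$, I would set $f_N(y,x)=\3_{2N+2}(2y+2,2x+1)$ and $g_N(y,x)=\3_{2N+3}(2y+2,2x+1)$ on $\{0,\dots,N\}^2$, and read off the constants (with $t=q^2$, and $\epsilon$ entering through the diagram) so that each family satisfies BPR(\ref{koutai}) and FPR(\ref{zensin}); this accounts for the $(-1)^n$ appearing in both the prefactor $(q^{2N-(-1)^n};q^{-2})_x$ and the base point $q^{-2N+(-1)^n}$ of the Affine $q$-Krawtchouk polynomial.

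The only remaining input is the initial row $O_N(x)=\3(2,2x+1)$. This is furnished by Lemma\ref{rinsetu-2}, which expresses $\3_{n+2}(2,r+1)$ through $\1_n(0,r)+\1_n(0,r+1)$; substituting the explicit values of Prop.\ref{1shoki} and using $\gamma^2=\epsilon q$ (Prop.\ref{gamma-jijo}) yields $O_N(x)$ in closed form (its bottom value $\sigma=\3_2(2,1)=\epsilon q$ being already available from the proof of Prop.\ref{gamma-jijo}). Feeding $O_N(x)$ into Prop.\ref{zenkasikitoku}(3)(ii) produces the factor $K^{\Aff}_y\!\bigl(x;\,q^{-2N+(-1)^n},N;q^2\bigr)$, and a final application of Prop.\ref{rinsetu}(4) extends the formula from odd to even $r$.

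The main obstacle will be the bookkeeping of the constants and of the sign/Gauss-sum powers rather than any single recursion. I must verify that the $O_N(x)$ obtained from Lemma\ref{rinsetu-2} and Prop.\ref{1shoki} coincides with the $O_N(x)$ forced by the recursion constants in Prop.\ref{zenkasikitoku}(3)(i), and that the overall prefactor $(-1)^{r+x+1}\,\epsilon^{\,y+1}\,q^{\,n+x^2+x-y-1}$ emerges exactly, uniformly in the parity $(-1)^n$. Getting every power of $\epsilon$ and $q$ to match across the even-$n$ and odd-$n$ families is where the real care is needed.
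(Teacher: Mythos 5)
Your proposal matches the paper's own proof: it derives the collapsed recursion (\ref{2jun-2}') from (\ref{2jun-2}) via Prop.\ref{rinsetu}(4), pairs it with (\ref{2gyaku-2}) as BPR/FPR for the families $f_N(y,x)=\3_{2N+2}(2y+2,2x+1)$ and $g_N(y,x)=\3_{2N+3}(2y+2,2x+1)$ with $t=q^2$, applies Prop.\ref{zenkasikitoku}(3), and finally extends from odd to even $r$ by Prop.\ref{rinsetu}(4), exactly as the paper does. The only cosmetic deviation is at the base case: you compute the whole initial row $\3_n(2,2x+1)$ from Lemma \ref{rinsetu-2} together with Prop.\ref{1shoki}, whereas the paper lets Prop.\ref{zenkasikitoku}(3)(i) generate that row from the recursion constants and needs only the seeds $\sigma=\3_2(2,1)=\epsilon q$ (from the proof of Prop.\ref{gamma-jijo}) and $\sigma=\3_3(2,1)=\epsilon q^2$ (from Lemma \ref{rinsetu-2}) --- your extra computation is consistent with (\ref{psi3-2-r}) and harmless.
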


\begin{proof}\;
First we consider $\3(s,r)$ for $\even s$ and $\odd r$.
Then by Prop.\ref{rinsetu}(4) we have
\begin{equation} \tag{\ref{2jun-2}'}
\epsilon \Psi^{(3)}_n(v+2, r)=\; -q^{r} \Psi^{(3)}_{n-2}(v,r-2) + q^{r} \Psi^{(3)}_{n-2}(v,r)\qquad(2 \leq v \leq n-2,\even\; \text{and}\; 1\leq r \leq n,\odd).
\end{equation}
(\ref{2jun-2}') and (\ref{2gyaku-2}) give 
BPR(\ref{koutai}) and FPR(\ref{zensin}) for
$f_N(y,x)=\3_{2N+2}(2y+2,2x+1)$ of the case $a=\epsilon q,\;b=\epsilon q^3,\;c=\epsilon q^{-1},\; d=\epsilon q^2,\; t=q^2$ and $\sigma=\3_2(2,1)=\epsilon q$ (from the proof of Lem.\ref{gamma-jijo}),
and for $g_N(y,x)=\3_{2N+3}(2y+2,2x+1)$ of the case $a=\epsilon q,\;b=\epsilon q^3,\;c=\epsilon q^{-1},\; d=\epsilon q^4,\; t=q^2$ and $\sigma=\3_3(2,1)=\epsilon q^2$ (by Lem.\ref{rinsetu-2}).
Therefore by Prop.\ref{zenkasikitoku}(3),
\begin{equation}
\label{psi3-2-r}
\3_{n}(2,2x+1)=\; \;(-1)^x \epsilon q^{n+x^2+x-1} \frac{(q^{n-2}; q^{-1})_{2x}}{(q^2;q^2)_x}\qquad(1 \leq 2x+1 \leq n-1),\\
\end{equation}
\vspace{-0.25in}
\begin{eqnarray}
\begin{split}
&\3_{2N+2}(2y+2,2x+1)
=\; \epsilon^y q^{-y}\, \3_{2N+2}(2,2x+1) \; K^{\mathrm{Aff}}_y(x; q^{-2N+1}, N; q^2), \qquad(0 \leq y,\!x \leq N)\;\; \text{and}\\
&\3_{2N+3}(2y+2,2x+1)
=\; \epsilon^y q^{-y}\, \3_{2N+3}(2,2x+1) \; K^{\Aff}_y(x; q^{-2N-1}, N; q^2)\qquad(0 \leq y,\!x \leq N).
\end{split}
\end{eqnarray}
And by Prop.\ref{rinsetu}(4), all results in Prop.\ref{psi3} are gotten.
\end{proof}

Finally we compute $\4$ as follows (with Prop.\ref{psi-zero2} we complete $\4$):
\begin{pp}
 \label{psi4}
\; For $n \geq 1,\;\odd s$ and $r,\; 1 \leq s,r \leq n$,
$$
\4_{n}(s,r)= 
\;(-1)^{x} \epsilon^{x+y} q^{n+x^2-y-1} \gamma
(q^{2N+(-1)^n}; q^{-2})_x \begin{bmatrix} N \\ x \end{bmatrix}_{x^2}
\, K^{\mathrm{Aff}}_y(x; q^{-2N-(-1)^n}, N; q^2),
$$\vspace{-0.1in}
$$\text{where}\quad N=\lfloor \frac{n-1}{2} \rfloor,\;y=\frac{s-1}{2},\; x= \frac{r-1}{2}.
$$
\end{pp}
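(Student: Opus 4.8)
The plan is to reduce $\4_n$ directly to the already-solved $\2_{n-1}$ by means of a single inverse-transform relation, so that no fresh Pascal recursion need be solved. By Prop.\ref{psi-zero}(3) and Prop.\ref{psi-zero2} we already know $\4(s,r)=0$ unless both $s$ and $r$ are odd, so it suffices to treat odd $s,r$ with $1\le s,r\le n$.

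First I would extract a clean relation from (\ref{gyaku-3}). Fix an odd input index $r$ and take the varying output index $u=s-1$ even; since $\4_n(u,r)=0$ for even $u$ by Prop.\ref{psi-zero2}, the relation (\ref{gyaku-3}) collapses to a single surviving term $\overline{\gamma}^{s}\,\4_n(s,r)=q^{n}\,\2_{n-1}(s-1,r-1)$. The boundary case $s=1$ (i.e. $u=0$) is handled by (\ref{gyaku-3-0}): there the terms $\2_n(1,r)-\2_n(0,r)$ cancel by Prop.\ref{rinsetu}(3), leaving $\overline{\gamma}\,\4_n(1,r)=q^{n}\,\2_{n-1}(0,r-1)$, which is the same formula at $s=1$. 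Thus, uniformly,
$$
\4_n(s,r)=\; q^{n}\,\overline{\gamma}^{-s}\,\2_{n-1}(s-1,r-1)\qquad(\text{$s,r$ odd},\;1\le s,r\le n),
$$
where $\2_{n-1}(\cdot,0)=1$ by the convention of sec.\ref{5-4}.

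The next step is to turn the Gauss-sum factor $\overline{\gamma}^{-s}$ into the single $\gamma$ that appears in the statement. Substituting $a\mapsto -a$ in (\ref{gamma}) and using $\epsilon=\sgn(-1)$ gives $\overline{\gamma}=\epsilon\gamma$, and together with $\gamma^{2}=\epsilon q$ (Prop.\ref{gamma-jijo}) this yields, for odd $s$ with $y=(s-1)/2$, the identity $\overline{\gamma}^{-s}=\gamma\,\epsilon^{y}q^{-y-1}$. Hence $\4_n(s,r)=\gamma\,\epsilon^{y}q^{\,n-y-1}\,\2_{n-1}(s-1,r-1)$.

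Finally I would substitute Prop.\ref{psi2} for $\2_{n-1}(s-1,r-1)$. With $N=\lfloor (n-1)/2\rfloor$, $y=(s-1)/2$ and $x=(r-1)/2$, applying Prop.\ref{psi2} in size $n-1$ has parameter $\lfloor(n-1)/2\rfloor=N$, its index pair becomes $(y,x)$, its Gaussian binomial $\begin{bmatrix} N \\ x \end{bmatrix}_{q^2}$ is unchanged, and using $(-1)^{n-1}=-(-1)^{n}$ its Pochhammer and Affine $q$-Krawtchouk arguments become exactly $(q^{2N+(-1)^n};q^{-2})_x$ and $K^{\Aff}_y(x;q^{-2N-(-1)^n},N;q^2)$. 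Collecting the scalar factors $\gamma\,\epsilon^{y}q^{\,n-y-1}\cdot(-\epsilon)^{x}q^{x^2}=(-1)^{x}\epsilon^{x+y}q^{\,n+x^2-y-1}\gamma$ then gives precisely Prop.\ref{psi4}. The only points requiring care are the index bookkeeping (including the $s=1$ boundary) and the two Gauss-sum identities $\overline{\gamma}=\epsilon\gamma$ and $\gamma^{2}=\epsilon q$; everything else is routine, and the appearance of the lone $\gamma$ is forced by the single inverse-transform step linking $\4$ to $\2$.
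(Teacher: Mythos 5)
Your proof is correct, but it takes a genuinely different route from the paper's. The paper's own proof is a one-liner in the opposite direction: it uses the forward relation (\ref{jun-2}) with odd $r$ (the term $\4_{n-1}(v,r-1)$ drops out because $r-1$ is even), giving $\4_n(s,r)=\gamma^{-r}\,\3_{n+1}(s+1,r)$, and then substitutes Prop.~\ref{psi3}; since (\ref{jun-2}) at size $n+1$ covers all $1\le v\le n$, no boundary case arises, and only $\gamma^2=\epsilon q$ (Prop.~\ref{gamma-jijo}) is needed to simplify $\gamma^{-r}=\epsilon^{x+1}q^{-x-1}\gamma$. You instead descend via the inverse-transform relations: your collapse of (\ref{gyaku-3}) to $\overline{\gamma}^{\,s}\,\4_n(s,r)=q^n\,\2_{n-1}(s-1,r-1)$ is valid (though the vanishing of $\4_n(u,r)$ for even $u$ and odd $r$ is the mixed-parity case, Prop.~\ref{psi-zero}(3), rather than Prop.~\ref{psi-zero2} which you cite at that point --- harmless, since you established the combined vanishing at the outset), your $s=1$ boundary treatment via (\ref{gyaku-3-0}) and Prop.~\ref{rinsetu}(3) is correct (alternatively both $\2_n(1,r)$ and $\2_n(0,r)$ are zero outright for odd $r$ by Prop.~\ref{psi-zero}(1)), and your auxiliary identity $\overline{\gamma}=\epsilon\gamma$, obtained from $a\mapsto -a$ in (\ref{gamma}), is true and not stated in the paper; it indeed gives $\overline{\gamma}^{-s}=\epsilon^{y}q^{-y-1}\gamma$ for $s=2y+1$. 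The final substitution of Prop.~\ref{psi2} at size $n-1$, with $\lfloor(n-1)/2\rfloor=N$ and $(-1)^{n-1}=-(-1)^n$ flipping the Pochhammer and Krawtchouk parameters to $q^{2N+(-1)^n}$ and $q^{-2N-(-1)^n}$, reproduces the stated constants exactly. As for what each approach buys: the paper's argument is shorter and needs no new Gauss-sum identity, at the cost of invoking the size-$(n+1)$ block $\3_{n+1}$; yours reduces strictly downward in $n$, so it only ever consumes already-computed data, and it rests on $\2$ (Prop.~\ref{psi2}), whose derivation is simpler than that of $\3$, at the cost of proving $\overline{\gamma}=\epsilon\gamma$ and handling the $s=1$ case separately.
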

\begin{proof}
By (\ref{jun-2}), we have $\4_n(s,r)= \gamma^{-r} \3_{n+1}(s+1, r)$.
Substituting Prop.\ref{psi3}, we get the result at once.
\end{proof}
\noindent
So we had all values of matrix elements $\Psi$.
And we can calculate canonical matrix elements $\varPhi$ also by (\ref{phi-psi}).


\section*{Concluding remarks}
In this paper, we found Krawtchouk or Affine $q$-Krawtchouk polynomials as matrix elements of group-invariant Fourier transformations, or zonal spherical functions (by relations in Sec.\ref{1-3}) on $\mathbb{F}^n$, $\M_n$, $\A_n$ and $\S_n$ on a finite field.
These examples are all in finite settings.
But we can extend some of our investigations to the pairs of locally compact Abelian groups and acting compact groups using the Haar measure instead of counting.
Then we will see some kinds of ($q$-)Krawtchouk polynomials as zonal spherical functions of some examples. 
It may be seen in the following papers.

\section*{Acknowledgement}
The auther would like to thank Professor Umeda Toru for suggesting this topic with one example in \cite{U}. 


\end{document}